%% file: neurips_2026.tex
\documentclass{article}

\PassOptionsToPackage{compress}{natbib}
\usepackage[preprint]{neurips_2026}
\usepackage[utf8]{inputenc} % allow utf-8 input
\usepackage[T1]{fontenc}    % use 8-bit T1 fonts
\usepackage{hyperref}       % hyperlinks
\usepackage{url}            % simple URL typesetting
\usepackage{booktabs}       % professional-quality tables
\usepackage{amsfonts}       % blackboard math symbols
\usepackage{nicefrac}       % compact symbols for 1/2, etc.
\usepackage{microtype}      % microtypography
\usepackage{xcolor}         % colors
\usepackage{booktabs}
\usepackage{tabularx}
\usepackage{makecell}
\input{header}

\usepackage{hyperref}
\hypersetup{
    colorlinks=true,
    linkcolor=blue,
    citecolor=blue,
    urlcolor=blue
}

\usepackage[capitalise,noabbrev,nameinlink]{cleveref}
\Crefname{assumption}{Assumption}{Assumptions}
\Crefname{fact}{Fact}{Facts}
\title{Adaptive Inference for Functionals of M-Estimands}

\author{%
  James Leiner  \\
  Carnegie Mellon Univeristy\\
  \texttt{jleiner@stat.cmu.edu} \\
  \And
  Aurélien Bibaut \\
  Netflix\\\
  \texttt{abibaut@netflix.com} \\
    \And
  Nathan Kallus \\
  Netflix and Cornell University \\
  \texttt{kallus@cornell.edu} \\
      \And
  Aaditya Ramdas \\
  Stanford University \\
  \texttt{aramdas@stanford.edu} \\
    \And
  Koulik Khamaru \\
  Rutgers University \\
  \texttt{kk1241@rutgers.edu} \\
}

\begin{document}

\maketitle

\begin{abstract}
\input{abstract}
\end{abstract}

\section{Introduction}

\input{intro}
\section{Problem Setup} \label{sec:setup}
\input{setup}
\section{Main Results} \label{sec:method}
\input{methodology}

\section{Experiments} \label{sec:experiments}

\input{experiments}

\section{Conclusion} \label{sec:conclusion}
\input{conclusion}
\newpage
\bibliographystyle{chicago}
\bibliography{ref}

\appendix
\input{appendix}

%%%%%%%%%%%%%%%%%%%%%%%%%%%%%%%%%%%%%%%%%%%%%%%%%%%%%%%%%%%%

%%%%%%%%%%%%%%%%%%%%%%%%%%%%%%%%%%%%%%%%%%%%%%%%%%%%%%%%%%%%

%\newpage
%\input{checklist.tex}

\end{document}

%% file: header.tex
\usepackage{mathtools}
\usepackage{bbm}
\usepackage{amsthm}
\usepackage{enumitem}
\usepackage{amsfonts}
\usepackage{amsmath}
\usepackage{amssymb}

\newcommand{\Filt}{\mathcal{F}_{t-1}}

\newcommand{\E}{\mathbb{E}}
\newcommand{\R}{\mathbb{R}}
\newcommand{\Var}{\mathrm{Var}}

\newcommand{\pto}{\overset{p}{\to}}
\newcommand{\dto}{\overset{d}{\to}}
\newcommand{\norm}[1]{\left\lVert#1\right\rVert}

\newcommand{\indep}{\perp\!\!\!\perp}

\newtheorem{fact}{Fact}
\newtheorem{theorem}{Theorem}
\newtheorem{lemma}{Lemma}

\newtheorem{proposition}{Proposition}
\newtheorem{assumption}{Assumption}
\newtheorem{remark}{Remark}
\newtheorem{example}{Example}

\DeclareMathOperator*{\argmin}{argmin}

%% file: abstract.tex
Reinforcement learning and contextual bandit algorithms have become increasingly common in sequential decision-making applications. When these methods are deployed in high-stakes domains, there is growing interest not only in learning effective policies, but also in conducting statistical inference for quantities learned under adaptive data collection. However, classical procedures applied naively in these settings can fail: even when estimators are unbiased, their variance becomes path-dependent and as a result may not be asymptotically normal. A growing literature has emerged to ameliorate this problem, but solutions tend to be problem specific and often rely on correct specification of a working model. In this work, we develop a unified framework for constructing asymptotically valid confidence intervals to cover smooth functionals of nonparametric M-estimands under adaptive sampling. Under Neyman orthogonality, we provide two novel methods for performing inference: (1) a self-normalized statistic based on the realized quadratic variation of the influence function and (2) a statistic using a plug-in estimate of the conditional variance based on reweighted influence function increments. Our results allow for flexible nonparametric estimation of nuisance parameters and remain valid under model misspecification. Our theory is  supported by a simulation study for a dynamic pricing application which demonstrates that this method can produce asymptotically valid confidence intervals where standard methods fail. 

%We identify a previously underexplored regime in which the asymptotic variance converges to a random limit. In this setting, classical inference fails, but a self-normalized statistic based on realized quadratic variation satisfies a central limit theorem without explicit variance estimation. For fully adaptive settings where variance may not stabilize to even a random quantity, we introduce a plug-in estimator of the time-varying conditional variance based on reweighted influence function increments, enabling valid inference under minimal assumptions. Our theoretical results are supported by a simulation study for a dynamic pricing application which demonstrates that standard methods can be severely miscalibrated in adaptive regimes while our approach remains valid. 

%% file: intro.tex
Reinforcement learning algorithms have become widespread due to their state-of-the-art performance in a wide variety of areas including health interventions \citep{info:doi/10.2196/jmir.7994}, personalized web recommendations \citep{afsar2022reinforcement}, public policy \citep{https://doi.org/10.3982/ECTA17527,10.1093/jeea/jvad067} and clinical trials \citep{https://doi.org/10.1002/sim.3720,info:doi/10.2196/18477}. The widespread adoption of these approaches means that many modern datasets no longer obey an $i.i.d.$ assumption, and attempting to apply classical statistical techniques to these data results in biased parameter estimates \citep{NEURIPS2019_65b1e92c} and invalid inferential guarantees. 

A classical result \citep{1982laiwei} demonstrates that a sufficient condition for ordinary least squares parameters to concentrate in the linear model $y_t = \beta^{T}x_{t} + \epsilon_t$ with adaptively chosen $x_t$ is for the covariance matrix $\sum_{t=1}^{T} x_{t}x_{t}^{T}$ to concentrate around a sequence of deterministic matrices $\{A_{T}\}_{T\ge1}$. In contextual bandit problems, this condition typically requires that the sampling policy converge to a deterministic stationary policy independent of the data trajectory. In cases where the optimal policy is not unique or there are many close ties, common bandit algorithms (e.g. $\epsilon$-greedy, Thompson sampling, UCB) may not converge \citep{pmlr-v80-deshpande18a,NEURIPS2020_6fd86e0a,khamaru2021near} and classical approaches to inference applied naively will consequently have non-Gaussian limiting distributions. 

Motivated by these results, a recent literature has developed to provide inferential guarantees on adaptively collected data. One set of approaches assumes that the sampling policies are known and appropriately reweighs observations to reduce bias or stabilize variance \citep{2019hadad,kato2021efficient,bibault2021postcontext,zhang2021mestimators,cook2023semiparametric,guo2025statisticalinferencemisspecifiedcontextual,leiner2026adaptiveoffpolicyinferencemestimators}. An alternative line of work assumes that the sampling policies are unknown and instead uses an online debiasing approach to ensure valid inference \citep{pmlr-v80-deshpande18a,NEURIPS2023_a399456a,khamaru2021near}. 

In the above cases, the inferential target is usually either a finite-dimensional parameter in a well-specified parametric model (e.g. least squares) or a smooth functional of the conditional regression function (e.g. average treatment effect). Despite these differences, the proposed estimators share a common structure: they combine importance weighting or martingale arguments with bias correction terms that resemble influence function adjustments. However, these constructions are typically developed in a problem-specific manner, with separate analyzes for each estimand and modeling framework. As a result, there is currently no general framework that unifies these approaches in the way that semiparametric efficiency theory \citep{bickel1993efficient} and double machine learning \citep{10.1111/ectj.12097,10.1093/biomet/asaa054} do in the $i.i.d.$ setting.

Moreover, many existing methods impose restrictive assumptions on the underlying model or data-generating process. In particular, several approaches either assume correctly specified parametric models or rely on limited forms of nuisance estimation, precluding the use of flexible  nonparametric estimators. Methods that allow model misspecification are typically confined to parametric Z estimators and require strong conditions such as convergence of the sampling policy to a deterministic limit \citep{guo2025statisticalinferencemisspecifiedcontextual}, accurate estimation of the conditional moments of a score function \citep{leiner2026adaptiveoffpolicyinferencemestimators}, or longitudinal settings where batch sizes can tend to infinity \citep{zhang2023statistical}. On the other hand, methods that do allow flexible estimation of nuisance parameters \citep{10.1214/24-AOS2485} do not provide coverage guarantees under misspecification.  These limitations restrict the applicability of current techniques in modern machine learning settings, where models are often misspecified and nuisance components are estimated using black box regression approaches.
\begin{table}[t]
\centering
\caption{List of target estimands studied in prior work that are specific instantiations of this framework. We note that this is not exhaustive and additional targets are discussed in Appendix~\ref{appendix:prior}.}
\label{tab:examples}

\scriptsize
\setlength{\tabcolsep}{2pt}
\renewcommand{\arraystretch}{1.05}

\begin{tabularx}{\linewidth}{
@{}
>{\raggedright\arraybackslash}p{0.18\linewidth}
>{\raggedright\arraybackslash\footnotesize}p{0.22\linewidth}
>{\centering\arraybackslash}p{0.24\linewidth}
>{\centering\arraybackslash}X
@{}
}
\toprule

\textbf{Target estimand}
&
\textbf{Prior Work}
&
$\theta_{P_e}$
&
$\Psi_e(\theta_{P_e})$
\\

\midrule

Average treatment effect
(Appendix~\ref{subsec:ATE})
&
\cite{2019hadad,bibault2021postcontext,kato2021efficient,cook2023semiparametric}
&
$\displaystyle
\E[Y\mid A=a,C=c]
$
&
$\displaystyle
\E_{P_C}\!\left[
\theta_{P_e}(1,C)-\theta_{P_e}(0,C)
\right]
$
\\

\addlinespace[2pt]

Parametric $M$-estimator
(Appendix~\ref{subsec:M-estimator})
&
\cite{zhang2021mestimators,guo2025statisticalinferencemisspecifiedcontextual,leiner2026adaptiveoffpolicyinferencemestimators}
&
$\displaystyle
\argmin_{\theta\in\Theta}
\E_{P_e}[\ell_\theta(C,A,Y)]
$
&
$\displaystyle
v^\top\theta_{P_e},
\quad \text{fixed } v
$
\\

\addlinespace[2pt]

Partial linear regression
(Appendix~\ref{subsec:partial_glm})
&
\cite{10.1214/24-AOS2485}
&
$\displaystyle
\argmin_{\theta}
\E_{P_e}\!\left[
\bigl(
Y-\theta^\top X- \eta_{P_e}
\bigr)^2
\right]
$
&
$\displaystyle
v^\top\theta_{P_e},
\quad \text{fixed } v
$
\\

\addlinespace[2pt]

Value of an optimal policy
(Appendix~\ref{subsec:optimal_policy})
&
\cite{10.1214/15-AOS1384}
&
$\displaystyle
\E[Y\mid A=a,C=c]
$
&
{\footnotesize
$\displaystyle
\sup_{\pi\in\Pi}
\E_P\!\left[
\int_{\mathcal A}
\pi(a\mid C)\theta_{P_e}(a,C)\,\mu(da)
\right]
$
}
\\

\bottomrule
\end{tabularx}
\end{table}

\textbf{Summary of Contributions} 
 To address these limitations, we develop a unified framework for inference on smooth functionals of nonparametric M-estimands under adaptive data collection. Our approach builds on recent advances in automatic debiased machine learning \citep{https://doi.org/10.3982/ECTA18515,vanderlaan2026automaticdebiasedmachinelearning}, which show that in the i.i.d. setting, smooth functionals of M-estimands can be represented via a Riesz representer that links the target functional to the curvature of the underlying loss. For adaptively collected data, we provide a general construction of one-step estimators that use this representation, enabling valid inference with flexible black box parameter and nuisance estimation. Specifically:
\begin{enumerate}
    \item We provide a \textbf{general method} for performing inference on smooth functionals of non-parametric $M$-estimands, including a first order expansion (\cref{thm:pe_expansion}) and a generic recipe for evaluating the resulting influence function on adaptively collected datasets. 
    \item We provide an \textbf{asymptotic normality} result (\cref{thm:clt}) that uses a novel plug-in estimate (\cref{thm:var_estimate})
    of the (time-varying) conditional variance of the influence function. Crucially, our variance estimator is based on reweighted estimated influence function increments and does not require training additional models or data splitting in contrast to prior work. 
    \item In prior work, variance was either assumed to stabilize to a deterministic limit or assumed not to converge. We identify an \textbf{intermediate regime} where variance converges to a stable but non-deterministic quantity. In this case, we provide an additional central limit theorem (\cref{thm:self-normalized}) based on \emph{self-normalized} increments of the influence function that does not require bespoke variance estimation techniques. 
\end{enumerate}
\textbf{Paper Organization} In \cref{sec:setup} we introduce the contextual bandit problem and define our class of target estimands. In \cref{sec:method} we derive the corresponding efficient influence function and show that an appropriately defined one-step estimator is asymptotically normal. In \cref{sec:experiments}, we demonstrate the utility of our framework using a dynamic pricing application. We provide concluding thoughts in \cref{sec:conclusion}. Note that we define some common notation used throughout in Appendix~\ref{appendix:notation}. All proofs are deferred to the Appendix.

%% file: setup.tex
We consider the contextual bandit problem over $T$ rounds. At each round $t=1,...,T$, a context $C_t \in \mathcal{C}$ is observed, an action $A_t \in \mathcal{A}$ is selected, and then an outcome $Y_t \in \R$ is observed. We denote the history up to time $t-1$ as $\mathcal{H}_{t-1} := \{C_i,A_i,Y_i \}_{i=1}^{t-1}$ and let $\mathcal{F}_{t-1}:=\sigma(\mathcal{H}_{t-1})$ be the corresponding filtration. At each round $t$, the observer selects a policy $\Pi_{t}$, measurable with respect to $\mathcal{F}_{t-1}$ and then samples $A_t \sim \Pi_{t}(\cdot\mid C_t)$. In certain contexts, it may be more  useful to collapse $C_t$ and $A_t$ into a single object which we will label as $X_{t} := (C_t,A_t) \in \mathcal{C} \times \mathcal{A}$. Similarly, we will label $Z_{t} = (C_t,A_t,Y_t) \in \mathcal{Z}$ with $\mathcal{Z} : = \mathcal{C} \times \mathcal{A} \times \mathbb{R}$ as a compact notation when referring to the observation tuple at step $t$.  We denote $P_{t}$ as the joint distribution of $Z_t$ conditional on $\mathcal{F}_{t-1}$.  As is standard in the literature, we assume that the marginal distribution of contexts and conditional distribution of rewards \emph{given} particular choices of actions and rewards are fixed across time. The dependence across time arises solely through the policy $\Pi_t$, which may depend on past history.

\begin{assumption}
\label{assumption:distribution}
The data generating process has the following properties
\begin{enumerate}[label=(\alph*)]
    \item (Exogenous contexts).  The contexts $\{C_{t}\}_{t=1}^{T}$ are $i.i.d.$ with $C_{t} \sim P_{\mathcal{C}}$ and $C_{t} \indep \mathcal{F}_{t-1}$ for all $t \in [T]$. 
    \item (Fixed conditional reward distribution). There exists a conditional distribution $P_{Y}(\cdot \mid c, a)$ such that for all $t \in [T]$,
$$Y_{t} \mid C_{t}, A_{t} \sim P_{Y}(\cdot \mid C_t,A_t),$$
and $Y_{t} \indep \mathcal{F}_{t-1} \mid  C_t,A_t$.
\end{enumerate}
\end{assumption}

Our goal will be to conduct inference on functionals of a non-parametric $M$-estimand. We mirror the framework taken in \cite{vanderlaan2026automaticdebiasedmachinelearning} but with the target and inference procedures adapted to the sequential, adaptive setting. Let $\Pi_{e}(\cdot\mid c)$ be an evaluation distribution that we assume to be \emph{fixed} and independent of history. We denote $P_{e}$ as the corresponding law where $C \sim P_{C}$, $A\sim \Pi_{e}(\cdot \mid C)$, and $Y \mid C,A \sim P_Y(\cdot \mid C,A)$. We note that at each $t$, $P_{e}$ shares the same marginal distribution of contexts and outcome mechanism as $P_{t}$, differing only in the distribution of actions. 
\begin{assumption} \label{assumption:densities}
There exists a common dominating measure $\mu$ such that for all $t$, $\Pi_{t}(\cdot\mid c), \Pi_{e}(\cdot \mid c) \ll \mu(\cdot \mid c)$ with corresponding densities $\pi_{t}(\cdot \mid c)$ and $\pi_{e}(\cdot \mid c)$. We assume these densities are known and $\Pi_e(\cdot\mid c)\ll\Pi_t(\cdot\mid c)$ almost surely.
\end{assumption}
Define a loss function $\ell : \ \Theta \times \Upsilon \times \mathcal{Z} \rightarrow \R$ written as $\ell_{\theta,\eta}(z)$, where $(\Theta,\norm{\cdot}_{\Theta})$ and $(\Upsilon,\norm{\cdot}_{\Upsilon})$ are normed linear spaces. Define the $M$-estimand,
\begin{equation} \label{eqn:m-estimand}
\theta_{P_e} := \argmin_{\theta \in \Theta} \E_{P_{e}} \left[  \ell_{\theta,\eta_{P_{e}}}(Z)\right],
\end{equation}
where $\eta_{P_e}$ is a nuisance function. Our parameter of interest will be a smooth functional of the $M$-estimand defined as 
\begin{equation} \label{eqn:target}
    \Psi_{e}(\theta_{P_{e}}) := \E_{P_{e}}\left[m_{\theta_{P_e}} (Z) \right],
\end{equation}
for some measurable function $m : \Theta \times \mathcal{Z}  \to \R$. We note that this framework is general enough to cover most target estimands studied in the adaptive inference literature, including: projection parameters for linear models and parametric $M$-estimators, partial linear regression parameters, and average treatment effect. See \cref{tab:examples} and Appendix~\ref{appendix:prior} for an extended discussion.  

%% file: methodology.tex
We first derive the influence function at $P_{e}$ and establish a local first-order expansion of $\Psi_e$.
\begin{assumption}[Regularity  of the loss at evaluation law] \label{assumption:reg_pe}
We assume that $\theta_{P_{e}}$ as defined in \cref{eqn:m-estimand} exists and is unique. Moreover, the following conditions hold:
\begin{enumerate}[label=(\alph*)]
\item (Differentiability) The loss $\ell_{\theta,\eta}(z)$ is twice Fr\'echet differentiable in
$\theta$ in a neighborhood of $\theta_{P_e}$ and the functional $m_\theta(z)$ is once Fr\'echet differentiable in $\theta$ in a neighborhood of $\theta_{P_e}$. \label{cond:differentiable}
\item (First order condition).  $\E_{P_e}\!\left[
\partial_{\theta} \ell_{\theta_{P_e},\eta_{P_e}}(Z)[h]
\right] = 0$ for all $h \in \Theta$.\label{cond:first_order}
\item (Neyman orthogonality) $  \label{cond:neyman}
\E_{P_e}\!\left[\partial_\eta \partial_\theta \ell_{\theta_{P_e},\eta_{P_e}}(Z)[h,u]\right] = 0$ for all $h \in \Theta$ and $u \in \Upsilon$.
\item (Positive definite Hessian) Let $H_{P_e}(u,h):=
\E_{P_e}\!\left[\partial_{\theta}^{2} \ell_{\theta_{P_e},\eta_{P_e}}(Z)[h,u]\right]$ for $h,u \in \Theta$. We assume there exists a constant $c>0$ such that $c\|h\|_\Theta^2\le H_{P_e}(h,h)$
for all $h \in \Theta$. \label{cond:hessian}
\end{enumerate}
\end{assumption}
Condition~\ref{cond:differentiable} guarantees sufficient smoothness of the loss and functional so that a first order expansion can be taken. Condition~\ref{cond:first_order} is a standard first order condition defining the target estimand. Condition~\ref{cond:neyman} imposes Neyman orthogonality, which ensures that first-order perturbations in the nuisance parameter do not affect the target. We note that this is not a substantial restriction, as most common loss functions can be orthogonalized without changing the underlying risk. Condition~\ref{cond:hessian} requires the Hessian be positive definite, ensuring local identification, and that the curvature of the loss induces a well-behaved inner product structure. Finally, we assume an envelope condition (\cref{assumption:envelope}) to bound higher order remainder terms in our first order expansion but defer its statement to Appendix~\ref{subsec:deferred_assumptions}.

Under Assumptions~\ref{assumption:reg_pe} and~\ref{assumption:envelope}, the bilinear form $H_{P_e}$ defines an inner product on $\Theta$ and the map $\E_{P_e}\!\left[\partial_{\theta} m_{\theta_{P_e}}(Z)[h]\right]$ is a bounded linear functional with respect to the induced norm.Write $\|h\|_{H_{P_e}}^2:=H_{P_e}(h,h)$ and let $\bar{\Theta}$ be the completion of $\Theta$ in this norm. Lemma~\ref{lemma:hilbert_completion} justifies extending the original norm and the derivative maps in their direction arguments to $\bar{\Theta}$. By the Riesz representation theorem, there is a unique element $\alpha_{P_e}\in\bar{\Theta}$ that solves the Hessian equation
\[E_{P_e}\!\left[\partial_{\theta} m_{\theta_{P_e}}(Z)[h]\right] =H_{P_e}(\alpha_{P_e},h) \text{ for all } h \in \Theta.\]
In particular, 
\[
\alpha_{P_e}
=
\arg\min_{\alpha\in\bar{\Theta}}
\left\{\frac{1}{2}
\E_{P_e}\!\left[\partial_{\theta}^{2} \ell_{\theta_{P_e},\eta_{P_e}}(Z)[\alpha,\alpha]\right] - E_{P_e}\!\left[\partial_{\theta} m_{\theta_{P_e}}(Z)[\alpha]\right]
\right\}.
\]
An extended discussion of this is deferred to \cref{appendix:proofs}. This representation allows us to express first-order changes in $\Psi$ in terms of the curvature of the loss, and forms the basis for the expansion below.

\begin{theorem}[One-step expansion at the evaluation law]
\label{thm:pe_expansion}
Suppose Assumptions~\ref{assumption:reg_pe}-\ref{assumption:envelope} hold. Let
$
(\bar\theta,\bar\eta,\bar\alpha)\in \Theta\times\Upsilon\times\Theta$
be in a neighborhood of
$(\theta_{P_e},\eta_{P_e},\alpha_{P_e})$. Let $
\delta_{\mathrm{lin}} := \mathbf{1}\{m_\theta \text{ is not linear in }\theta
\text{ at } \theta_{P_e}\}$ and $\delta_{\mathrm{quad}} := \mathbf{1}\{\ell_{\theta,\eta_{P_e}}(\cdot) \text{ is not quadratic at } \theta_{P_e}\}$. Then,
\begin{align*}
\Psi_e(\bar\theta)-\Psi_e(\theta_{P_e})
&=
 \E_{P_e}\left[\partial_\theta\ell_{\bar\theta,\bar\eta}[\bar\alpha]\right]
-
H_{P_e}[\bar\alpha-\alpha_{P_e},\,\bar\theta-\theta_{P_e}]  \\
& \qquad +  (\delta_{\mathrm{lin}} + \delta_{\mathrm{quad}})
O\!\left(
\|\bar\alpha\|_\Theta
\|\bar\theta - \theta_{P_e}\|_\Theta^2
\right)\\
& \qquad +
O\left(\|\bar\alpha\|_\Theta\left(\|\bar\theta-\theta_{P_e}\|_\Theta
\|\bar\eta-\eta_{P_e}\|_\Upsilon+\|\bar\eta-\eta_{P_e}\|_\Upsilon^2\right)\right).
\end{align*}

\end{theorem}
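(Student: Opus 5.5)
The plan is to decompose $\Psi_e(\bar\theta)-\Psi_e(\theta_{P_e})$ into a linear term plus remainders. Then use the Riesz identity to trade the linear term for a Hessian term, and match that Hessian term against a Taylor expansion of the score $\E_{P_e}[\partial_\theta\ell_{\bar\theta,\bar\eta}[\bar\alpha]]$.

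\textbf{Expansion of $\Psi_e$.} Write $\Delta:=\bar\theta-\theta_{P_e}$. By Fr\'echet differentiability of $m_\theta$ (\cref{assumption:reg_pe}\ref{cond:differentiable}) and the envelope condition (\cref{assumption:envelope}), which lets us exchange $\E_{P_e}$ with derivatives,
\[
\Psi_e(\bar\theta)-\Psi_e(\theta_{P_e})=\E_{P_e}\!\left[\partial_\theta m_{\theta_{P_e}}(Z)[\Delta]\right]+\delta_{\mathrm{lin}}\,O(\|\Delta\|_\Theta^2).
\]
When $m$ is linear the remainder vanishes exactly. By the Riesz/Hessian equation defining $\alpha_{P_e}$, the linear term equals $H_{P_e}(\alpha_{P_e},\Delta)$, which we split as
\[
H_{P_e}(\alpha_{P_e},\Delta)=H_{P_e}(\bar\alpha,\Delta)-H_{P_e}(\bar\alpha-\alpha_{P_e},\Delta).
\]
Since $\bar\alpha,\Delta\in\Theta$ and $H_{P_e}$ is the extended bilinear form from Lemma~\ref{lemma:hilbert_completion}, this split is legitimate.

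\textbf{Expansion of the score.} Consider $g(\theta,\eta):=\E_{P_e}[\partial_\theta\ell_{\theta,\eta}(Z)[\bar\alpha]]$, which is linear in $\bar\alpha$. We Taylor expand $g$ jointly around $(\theta_{P_e},\eta_{P_e})$:
\begin{itemize}[label={}]
\item the zeroth-order term vanishes by the first order condition \ref{cond:first_order};
\item the $\eta$-direction first-order term vanishes by Neyman orthogonality \ref{cond:neyman};
\item the $\theta$-direction first-order term is exactly $H_{P_e}(\bar\alpha,\Delta)$.
\end{itemize}
The second-order remainders are the pure $\theta\theta$ term, the cross $\theta\eta$ term and the $\eta\eta$ term, each bounded via the envelope by $\|\bar\alpha\|_\Theta$ times, respectively, $\|\Delta\|^2$, $\|\Delta\|\|\bar\eta-\eta_{P_e}\|$ and $\|\bar\eta-\eta_{P_e}\|^2$. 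The $\theta\theta$ term involves the third $\theta$-derivative of $\ell$ at $\eta_{P_e}$. If $\ell_{\theta,\eta_{P_e}}$ is quadratic this derivative is zero, giving the factor $\delta_{\mathrm{quad}}$. To keep this clean, I will first move $\eta$ from $\bar\eta$ to $\eta_{P_e}$ at fixed $\bar\theta$, and then expand in $\theta$ at $\eta_{P_e}$. This ordering makes the cross term appear as a mixed derivative evaluated along the path, bounded by $\|\Delta\|\|\bar\eta-\eta_{P_e}\|+\|\bar\eta-\eta_{P_e}\|^2$.

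\textbf{Combining the pieces.} Solving the score expansion for $H_{P_e}(\bar\alpha,\Delta)$ and substituting gives
\[
\Psi_e(\bar\theta)-\Psi_e(\theta_{P_e})=g(\bar\theta,\bar\eta)-H_{P_e}(\bar\alpha-\alpha_{P_e},\Delta)+\text{remainders},
\]
which is the stated display. The $m$-remainder $O(\|\Delta\|^2)$ is absorbed into $O(\|\bar\alpha\|_\Theta\|\Delta\|^2)$ under the normalization in the envelope assumption; if necessary, I will state it with $\max(1,\|\bar\alpha\|)$, which matches up to constants in the neighborhood.

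\textbf{Main obstacle.} The delicate step is the $\eta$-direction expansion. Orthogonality is only assumed at $(\theta_{P_e},\eta_{P_e})$, but the mixed derivative is evaluated at $\bar\theta$. So I will write
\[
\partial_\eta\partial_\theta\ell_{\bar\theta,\tilde\eta}=\partial_\eta\partial_\theta\ell_{\theta_{P_e},\eta_{P_e}}+\bigl(\text{Lipschitz increments in }\theta\text{ and }\eta\bigr),
\]
with $\tilde\eta$ an intermediate point. The first piece vanishes in expectation by orthogonality. The increments are controlled by the envelope's Lipschitz bounds on second derivatives, producing exactly the cross and quadratic nuisance terms. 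Justifying differentiation under $\E_{P_e}$ in $\eta$, where no Fr\'echet differentiability is explicitly stated in \ref{cond:differentiable}, relies entirely on \cref{assumption:envelope}.
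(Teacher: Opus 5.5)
Your proposal is correct and follows the paper's proof essentially step for step: a Taylor expansion of $m_\theta$ with a Lipschitz remainder, then the Riesz identity to turn the linear term into $H_{P_e}(\alpha_{P_e},\bar\theta-\theta_{P_e})$, then an expansion of the score whose zeroth-order and $\eta$-first-order terms vanish by the first-order condition and Neyman orthogonality, and finally recombination via $H_{P_e}(\alpha_{P_e},\cdot)=H_{P_e}(\bar\alpha,\cdot)-H_{P_e}(\bar\alpha-\alpha_{P_e},\cdot)$ (as in the paper, bound the $\theta\theta$ remainder with the Hessian's Lipschitz bound, since a third derivative is not assumed). The differences are minor: you expand the score along a two-leg path ($\eta$ at fixed $\bar\theta$, then $\theta$ at $\eta_{P_e}$) rather than the paper's joint segment $(\theta_u,\eta_u)$, which makes the $\delta_{\mathrm{quad}}$ factor exact without the extra $\eta$-Lipschitz split that the paper's ``integrand is constant'' remark tacitly needs; and your $\max(1,\|\bar\alpha\|_\Theta)$ caveat for the $m$-remainder is justified, because the paper bounds that term by $\E_{P_e}[C(Z)]\|\bar\theta-\theta_{P_e}\|_\Theta^2$ with no $\|\bar\alpha\|_\Theta$ factor and later uses $(1+\|\bar\alpha\|_\Theta)$ itself.
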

Motivated by this expansion, we denote 
\begin{equation} \label{eqn:oracle_EIF}
\varphi_{P_e}(Z)
:=
m_{\theta_{P_e}}(Z)
-
\Psi(P_e)
-
\partial_\theta \ell_{\theta_{P_e},\eta_{P_e}}(Z)[\alpha_{P_e}].
\end{equation}
By construction, $\E_{P_e}[\varphi_{P_e}(Z)] = 0$. We note that this function coincides with the efficient influence function of $\Psi$ at $P_e$ in the nonparametric model. 
\iffalse
Let $\{P_\varepsilon : \epsilon \in (-\varepsilon_0,\varepsilon_0)\}$ be a regular parametric submodel passing through $P_e$ at $\varepsilon=0$, with densities $p_\varepsilon$ with respect to a common dominating measure. Let
$
s(Z)
:=
\left.\frac{d}{d\epsilon} \log p_\epsilon(Z)\right|_{\epsilon=0}$
denote the associated score function, which satisfies $E_{P_e}[s(Z)] = 0$. It follows that
\[
\E_{P_e}[\varphi_{P_e}(Z)\, s(Z)]
=
\left.\frac{d}{d\varepsilon}\Psi(P_\varepsilon)\right|_{\varepsilon=0},
\]
so that $\varphi_{P_e}$ represents the pathwise derivative.
\fi 
\paragraph{Change of measure from adaptive data to evaluation policy} 
\label{subsec:adaptive_weights}
We move now to writing a first order expansion for an \emph{empirically realizable} functional, obtained by replacing population expectation under $P_{e}$ with an empirical operator. Consider estimating $\Psi_{e}(\theta_{P_e})$ by taking a weighted average over the sample, for instance, $\frac{1}{T}\sum_{t=1}^{T}w_{t}m(\bar{\theta})$ for some $\bar{\theta} \in \Theta$. Then,
\begin{align*}
\left(\frac{1}{T}\sum_{t=1}^{T} w_{t} m_{\bar\theta}(Z_t)\right) - \Psi_{e}(\theta_{P_e}) &= \underbrace{\left(\frac{1}{T}\sum_{t=1}^{T} w_{t} m_{\bar\theta}(Z_t)\right) - \Psi_{e}(\bar{\theta})}_{\text{Empirical process term}} + \underbrace{\left(\Psi_{e}(\bar{\theta}) - \Psi_{e}(\theta_{P_e})\right)}_{\text{Plug-in error}}.\\ 
\end{align*}
The second term can be analyzed by applying \cref{thm:pe_expansion}. For the first term, we note that it introduces first order bias unless $\E\left[\frac{1}{T}\sum_{t=1}^{T} w_{t} m(\bar{\theta})(Z_t)\right] = \E_{P_e}[m(\bar{\theta})(Z_t)]$. To eliminate this bias, we can choose $w_{t} := \frac{\pi_{e}(A_t|C_t)}{\pi_{t}(A_t|C_t)}$ as the inverse propensity weights because for any integrable function $f$, 
\begin{align*}
\E\left[ w_{t}f(C_t,A_t,Y_t) | \mathcal{F}_{t-1}\right] &= 
\E\left[\E\left[w_{t}f(C_t,A_t,Y_t) |\mathcal{F}_{t-1}, C_{t}\right]\mid\mathcal F_{t-1}\right] \\
&= \int f(c,a,y)P_Y(dy\mid c,a)\pi_e(a\mid c)\mu(da\mid c)P_C(dc) \\
&=\E_{P_{e}}\left[ f(C,A,Y)\right]. 
\end{align*}
This allows us to replace any expectation involving $f$ defined relative to the evaluation policy, such as $\E_{P_e}[f(Z)]$, with the empirically realizable form $\frac{1}{T}\sum_{t=1}^T w_t \E \left[f(Z_t)  \mid \Filt \right]$.

\subsection{One-step estimator and asymptotic normality results} \label{subsec:onestep}
Our asymptotic results rely on the fact that a suitably rescaled influence function admits a martingale structure. Let $\varphi_{t} := \varphi_{P_e}(Z_t)$ and note that $\E[w_t \varphi_t \mid \mathcal{F}_{t-1}] = P_e \varphi_{P_e} = 0$. This implies that $\{w_{t} \varphi_t\}_{t=1}^T$ is a martingale difference sequence with respect to the filtration $\{\mathcal F_{t-1}\}_{t=1}^{T}$. Define
\begin{equation} \label{eqn:cond_var}
   \sigma_t^2
:=
\Var(w_{t}\varphi_t\mid \mathcal F_{t-1})
=
\E[w_{t}^{2}\varphi_t^2\mid \mathcal F_{t-1}].
\end{equation}
If $T^{-1}\sum_{t=1}^{T}\sigma_{t}^{2} \overset{p}{\to} \sigma^{2}$ for a deterministic constant $\sigma^{2}>0$, one may apply a martingale central limit theorem \citep{3f41bc42-f438-35b7-8504-3723ce0fea5f,dvoretzky1972,hall2014martingale} to show that under regularity conditions $T^{-1/2} \sum_{t=1}^{T} w_{t} \varphi_{t} \overset{d}{\to}N(0,\sigma^{2})$. However, in adaptive settings, the conditional variance often will not converge to a deterministic constant, rendering naive variance estimation inappropriate. Alternatively, one can define the martingale difference sequence as $\zeta_{t} := \sigma_{t}^{-1}w_{t} \varphi_{t}$ and construct a \emph{sequence} of plug-in estimators $\{\hat{\sigma}_{t}\}_{t=1}^{T}$ targeting the conditional variance. This strategy of stabilizing observations using conditional variance estimates was initially used in the non-adaptive setting by \cite{10.1214/15-AOS1384} and later work has applied it to martingale difference sequences \citep{2019hadad,kato2020standardized,bibault2021postcontext,leiner2026adaptiveoffpolicyinferencemestimators}. Of course, constructing such estimators is nontrivial; we postpone this to \cref{subsec:cond_var}.

Supposing that a suitable estimate of $\hat{\sigma}_{t}$ exists, let $(\hat\theta_t, \hat\eta_t, \hat \alpha_t)$ denote estimators of $(\theta_{P_e}, \eta_{P_e}, \alpha_{P_e})$. Letting $w_{t} = \frac{\pi_{e}(A_{t}|C_t)}{\pi_{t}(A_t|C_t)}$ and $\hat{A}_{T} = \sum_{t=1}^T \hat \sigma_t^{-1} w_t $, we define our one-step estimator as
\begin{equation} \label{eqn:onestep}
\hat\Psi_T^{\mathrm{os}}
:=\hat{A}_{T}^{-1}
\sum_{t=1}^T \hat \sigma_t^{-1} w_t \left(m_{\hat\theta_t}(Z_t)-\dot\ell_{\hat\theta_t,\hat\eta_t}(Z_t)[\hat \alpha_t]\right).
\end{equation}
One-step estimation is a standard semiparametric technique for debiasing naive plug-in estimators. The correction term involving $-\dot\ell_{\hat\theta_t,\hat\eta_t}(Z_t)[\hat \alpha_t]$ arises from the first-order expansion in \cref{thm:pe_expansion} and removes the leading bias induced by nuisance estimation. After reweighting and stabilization, the estimator behaves asymptotically like an average of mean-zero influence function increments; we formalize this asymptotic normality result in \cref{thm:clt}. 
\begin{theorem}[CLT for one-step estimator] \label{thm:clt}
Let $(\hat\theta_t, \hat\eta_t, \hat \alpha_t,\hat{\sigma}_{t})$ denote $\mathcal{F}_{t-1}$ measurable estimators of $(\theta_{P_e}, \eta_{P_e}, \alpha_{P_e}, \sigma_t)$ and define $\hat\Psi_T^{\mathrm{os}}$ as in \cref{eqn:onestep}. Recall that $w_{t} = \frac{\pi_{e}(A_{t}|C_t)}{\pi_{t}(A_t|C_t)}$ and  $\varphi_{t} := \varphi_{P_e}(Z_t)$ as defined in \cref{eqn:oracle_EIF}. Define $\hat{A}_{T} := \sum_{t=1}^{T}\hat\sigma_{t}^{-1} w_t $. Assume
\begin{enumerate}[label=(\thetheorem\alph*), ref=(\thetheorem\alph*), itemsep=0pt, topsep=2pt]
\item $\frac{1}{T}\sum_{t=1}^T \|\hat\theta_t-\theta_{P^e}\|_\Theta^4=o_p(T^{-1})$, $\frac{1}{T}\sum_{t=1}^T \|\hat\eta_t-\eta_{P^e}\|_\Upsilon^4=o_p(T^{-1})$, and $\frac{1}{T}\sum_{t=1}^T \|\hat \alpha_t-\alpha_{P_e}\|_\Theta^4=o_p(T^{-1})$;  \label{cond:rates}
\item For any $\epsilon >0$, $\frac{1}{T} \sum_{t=1}^T
\E\!\left[w_t^{2}\varphi_t^2\mathbf{1}\{|w_t\varphi_t| > \varepsilon \sqrt{T}\}
\mid \mathcal{F}_{t-1}\right] \xrightarrow{p} 0$; \label{cond:lindeberg}
\item $\sup_{t\in [T]}\sup_{{c,a} \in \mathcal{C}\times \mathcal{A}}  \frac{\pi_{e}(a\mid c)}{\pi_t(a\mid c)} = O(T^{1/3})$ almost surely; \label{cond:weight}
\item $\frac{1}{T}\sum_{t=1}^{T}  \sigma_{t}^{2}/\hat{\sigma_{t}}^{2} -1 =o_{p}(1)$ and    $\min_{t\in [T]}\hat{\sigma}_{t}^{2} >c$ for some $c>0$ almost surely. \label{cond:consistency}
\end{enumerate}
Then under Assumptions~\ref{assumption:distribution}--\ref{assumption:envelope},
\begin{equation}
    \frac{ \hat{A}_T}{\sqrt T}(\hat\Psi_T^{\mathrm{os}} - \Psi_e(\theta_{P_e})) \xrightarrow{d} N(0,1). 
\end{equation}
\end{theorem}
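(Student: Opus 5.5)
We start from the identity
\[
\frac{\hat A_T}{\sqrt T}\bigl(\hat\Psi_T^{\mathrm{os}}-\Psi_e(\theta_{P_e})\bigr)
=\frac{1}{\sqrt T}\sum_{t=1}^T \hat\sigma_t^{-1}w_t\Bigl(m_{\hat\theta_t}(Z_t)-\dot\ell_{\hat\theta_t,\hat\eta_t}(Z_t)[\hat\alpha_t]-\Psi_e(\theta_{P_e})\Bigr),
\]
and split each summand as $\hat\sigma_t^{-1}w_t\varphi_t+\hat\sigma_t^{-1}w_t\Delta_t(Z_t)$. Here
\[
\Delta_t(z):=m_{\hat\theta_t}(z)-m_{\theta_{P_e}}(z)-\dot\ell_{\hat\theta_t,\hat\eta_t}(z)[\hat\alpha_t]+\dot\ell_{\theta_{P_e},\eta_{P_e}}(z)[\alpha_{P_e}].
\]
We then decompose $\hat\sigma_t^{-1}w_t\Delta_t(Z_t)$ into a martingale difference $D_t:=\hat\sigma_t^{-1}\bigl(w_t\Delta_t(Z_t)-\E[w_t\Delta_t(Z_t)\mid\Filt]\bigr)$ and a drift $\hat\sigma_t^{-1}\E[w_t\Delta_t(Z_t)\mid\Filt]$. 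Because $(\hat\theta_t,\hat\eta_t,\hat\alpha_t,\hat\sigma_t)$ are $\Filt$-measurable, the change-of-measure identity gives $\E[w_t\Delta_t(Z_t)\mid\Filt]=\E_{P_e}[\Delta_t(Z)]$ evaluated at the frozen estimates. Using $\E_{P_e}[\partial_\theta\ell_{\theta_{P_e},\eta_{P_e}}[\alpha_{P_e}]]=0$ from the first order condition, this equals
\[
\Psi_e(\hat\theta_t)-\Psi_e(\theta_{P_e})-\E_{P_e}\bigl[\partial_\theta\ell_{\hat\theta_t,\hat\eta_t}[\hat\alpha_t]\bigr].
\]
\cref{thm:pe_expansion}, applied with $(\bar\theta,\bar\eta,\bar\alpha)=(\hat\theta_t,\hat\eta_t,\hat\alpha_t)$, bounds this drift by
\[
O\Bigl(\|\hat\alpha_t-\alpha_{P_e}\|\,\|\hat\theta_t-\theta_{P_e}\|+\|\hat\alpha_t\|\bigl(\|\hat\theta_t-\theta_{P_e}\|^2+\|\hat\theta_t-\theta_{P_e}\|\,\|\hat\eta_t-\eta_{P_e}\|+\|\hat\eta_t-\eta_{P_e}\|^2\bigr)\Bigr).
\]
Here we use boundedness of $H_{P_e}$ in $\|\cdot\|_\Theta$ via the envelope assumption. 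Since $\hat\sigma_t^{-1}\le c^{-1/2}$, Cauchy--Schwarz and \ref{cond:rates} give
\[
T^{-1/2}\sum_t\|a_t\|\|b_t\|\le \sqrt T\Bigl(\tfrac1T\sum\|a_t\|^2\Bigr)^{1/2}\Bigl(\tfrac1T\sum\|b_t\|^2\Bigr)^{1/2}=o_p(1),
\]
because $\frac1T\sum\|a_t\|^2\le(\frac1T\sum\|a_t\|^4)^{1/2}=o_p(T^{-1/2})$. So the drift is negligible. On the neighborhood where \cref{thm:pe_expansion} applies, $\|\hat\alpha_t\|$ is bounded; we handle this by noting that \ref{cond:rates} forces all but a vanishing fraction of the estimates into the neighborhood, or by working on that event.

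Next we show $T^{-1/2}\sum_t D_t=o_p(1)$. By Lenglart's inequality, or Chebyshev applied to martingale increments, it suffices that
\[
\frac1T\sum_t\E[D_t^2\mid\Filt]\le \frac{1}{cT}\sum_t\E[w_t^2\Delta_t^2\mid\Filt]\pto0.
\]
We write $\E[w_t^2\Delta_t^2\mid\Filt]=\E_{P_e}[w\Delta_t^2]\le\sup w\cdot\E_{P_e}[\Delta_t^2]$. The envelope assumption, through a mean-value bound, gives $\E_{P_e}[\Delta_t^2]\lesssim\|\hat\theta_t-\theta_{P_e}\|^2+\|\hat\eta_t-\eta_{P_e}\|^2+\|\hat\alpha_t-\alpha_{P_e}\|^2$. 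Then \ref{cond:weight} and \ref{cond:rates} yield a bound of order
\[
O(T^{1/3})\Bigl(\tfrac1T\sum\|\cdot\|^4\Bigr)^{1/2}=O(T^{1/3})\,o_p(T^{-1/2})=o_p(1).
\]
This is exactly where the $T^{1/3}$ weight growth is tolerated, and I expect this step to be the main obstacle. The difficulty is that the envelope assumption must be shown to deliver an $L^2(P_e)$ Lipschitz-type bound on $\Delta_t$ that is uniform in $z$-integrability, including the product term $\dot\ell_{\hat\theta,\hat\eta}[\hat\alpha_t-\alpha_{P_e}]$. Controlling that product term requires a second-moment bound on $\partial_\theta\ell$ in a neighborhood.

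Finally we treat the leading term $T^{-1/2}\sum_t\hat\sigma_t^{-1}w_t\varphi_t$ with the martingale CLT (Hall--Heyde) applied to $\xi_t:=T^{-1/2}\hat\sigma_t^{-1}w_t\varphi_t$. Its conditional variance is
\[
\sum_t\E[\xi_t^2\mid\Filt]=\frac1T\sum_t\sigma_t^2/\hat\sigma_t^2\pto1
\]
by \ref{cond:consistency}. For the conditional Lindeberg condition, $\hat\sigma_t^{-2}\le c^{-1}$ gives
\[
\E\bigl[\xi_t^2\mathbf 1\{|\xi_t|>\epsilon\}\mid\Filt\bigr]\le \frac{1}{cT}\,\E\bigl[w_t^2\varphi_t^2\mathbf 1\{|w_t\varphi_t|>\epsilon\sqrt{c}\sqrt T\}\mid\Filt\bigr].
\]
Summing over $t$, this tends to $0$ by \ref{cond:lindeberg}. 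The limiting variance is the constant $1$, so no nesting condition on the filtrations is needed, and $\sum_t\xi_t\dto N(0,1)$. Slutsky's lemma then combines the three pieces and gives the claim.
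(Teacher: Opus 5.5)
Your proof is correct, but it handles the remainder $T^{-1/2}\sum_t\hat\sigma_t^{-1}w_t\{F(\hat\theta_t,\hat\eta_t,\hat\alpha_t)-F_0\}(Z_t)$ differently from the paper.

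\textbf{The paper's route.} In \cref{lemma:remainder} it Taylor-expands $F(\hat\theta_t,\hat\eta_t,\hat\alpha_t)-F_0$ pointwise in $z$. The three first-order pieces are each exactly $P_e$-mean zero, by the Riesz identity, Neyman orthogonality and the first-order condition respectively. After weighting they are therefore martingale differences, controlled by \cref{lemma:empirical_predictable}. The pointwise second-order remainder $C(z)\delta_t$ is then bounded by an \emph{uncentered} Cauchy--Schwarz step. That step needs $\frac1T\sum_t\hat\sigma_t^{-2}w_t^2C(Z_t)^2=O_p(1)$, and this is where the fourth-moment condition and $K_T^3/T=O(1)$ enter.

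\textbf{Your route.} You center the whole remainder at once. The predictable drift is exactly the population remainder of \cref{thm:pe_expansion}, so you reuse that theorem instead of re-expanding, and no weights appear in the drift. The martingale part then needs only an $L^2(P_e)$ Lipschitz bound on $\Delta_t$ times $K_T:=\sup_{t,c,a}\pi_e(a\mid c)/\pi_t(a\mid c)$.

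\textbf{What each buys.} In your route the weights enter only through $K_T\cdot o_p(T^{-1/2})$, so $K_T=O(T^{1/2})$ and $\E_{P_e}[C^2]<\infty$ would already suffice. You also avoid the uncentered step. Its compensator $\frac1T\sum_t\hat\sigma_t^{-2}\E_{P_e}[(\pi_e/\pi_t)\,C^2]$ is in general only $O(K_T)$, not $O(1)$, once the weights grow. The paper's route, in exchange, shows explicitly which structural identity removes which first-order term; in yours that bookkeeping is packaged inside \cref{thm:pe_expansion}.

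\textbf{Two tightenings.}

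First, condition~\ref{cond:rates} gives more than ``all but a vanishing fraction'' of the estimates in the neighborhood. Indeed $\max_t\|\hat\theta_t-\theta_{P_e}\|_\Theta^4\le\sum_t\|\hat\theta_t-\theta_{P_e}\|_\Theta^4=o_p(1)$, and likewise for $\hat\eta_t$ and $\hat\alpha_t$. So every estimate lies in the envelope neighborhood with probability tending to one, as in \cref{lemma:bounded_riesz}. Since the estimates are $\Filt$-measurable, you can stop predictably at the first exit from the neighborhood, which makes the Lenglart step rigorous.

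Second, the step you flag as the main obstacle follows directly from \cref{assumption:envelope}. Write $\dot\ell_{\hat\theta_t,\hat\eta_t}[\hat\alpha_t]-\dot\ell_{\theta_{P_e},\eta_{P_e}}[\alpha_{P_e}]=\dot\ell_{\theta_{P_e},\eta_{P_e}}[\hat\alpha_t-\alpha_{P_e}]+(\dot\ell_{\hat\theta_t,\hat\eta_t}-\dot\ell_{\theta_{P_e},\eta_{P_e}})[\hat\alpha_t]$. The first piece is bounded by the third envelope item. The second is bounded by integrating the Hessian and mixed-derivative envelopes along the segment, with $\|\hat\alpha_t\|_\Theta$ bounded on the localized event. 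Together with the bound on $m_{\hat\theta_t}-m_{\theta_{P_e}}$, this gives $|\Delta_t(z)|\lesssim C(z)(\|\hat\theta_t-\theta_{P_e}\|_\Theta+\|\hat\eta_t-\eta_{P_e}\|_\Upsilon+\|\hat\alpha_t-\alpha_{P_e}\|_\Theta)$. This is essentially the computation the paper carries out in \cref{lemma:epe_f}.
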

\begin{remark}
In the case where the loss is quadratic and the functional is linear with respect to $\theta$ (i.e. $\delta_{\text{lin}} = \delta_{\text{quad}}=0$ as defined in \cref{thm:pe_expansion}) and there is no nuisance parameter (e.g. $\Upsilon =\{ 0\}$), condition~\ref{cond:rates} can be simplified to a requirement that  $\frac{1}{T}\sum_{t=1}^{T}\E_{P_e}\!\left[\partial_{\theta}^{2} \ell_{\theta_{P_e},\eta_{P_e}}(Z)[\hat\theta_{t} - \theta_{P_e},\hat\alpha_{t} - \alpha_{P_e}]\right] =o_p(T^{-1/2})$.
\end{remark}
Requiring that all estimators be $\mathcal{F}_{t-1}$ measurable is natural in the sequential setting and ensures that the empirical process terms arising from parameter and nuisance estimation errors form martingale difference sequences. %so that a martingale central limit theorem can be applied.  
In the absence of this condition, one would instead need uniform control over the underlying function class, which would require stronger assumptions, such as bracketing entropy bounds. However, such conditions are often difficult to verify for flexible, nonparametric estimators, which are explicitly allowed in our framework.

Condition~\ref{cond:rates} is stated in terms of time-averaged higher-order moments of the estimation errors. This reflects the fact that, in the sequential setting, the analysis requires control of products of time-indexed estimation errors. The stated moment conditions provide a convenient sufficient condition for controlling such products via repeated applications of the Cauchy--Schwarz inequality. Intuitively, the condition should be interpreted as requiring that large estimation errors arising early in the sequential process do not dominate the time average. In practice, we recommend using an initial burn-in period of length $T_0=\lfloor aT\rfloor$ for some fixed $a\in(0,1)$ to train pilot estimates and excluding these observations when computing $\hat\Psi_T^{\mathrm{os}}$. A simple implementation is then to freeze the nuisance estimators after the burn-in period by setting $(\hat\theta_t,\hat\eta_t,\hat\alpha_t):=(\hat\theta_{T_0+1},\hat\eta_{T_0+1},\hat\alpha_{T_0+1})$ for all $t>T_0$ and then computing $\hat\Psi_T^{\mathrm{os}}$ using only observations from times $T_0+1$ through $T$. In this special case, condition~\ref{cond:rates} reduces to the more familiar requirement that each estimator converges at rate $o_p(T^{-1/4})$. More generally, nuisance estimators may be updated periodically after the burn-in period provided the resulting estimators continue to satisfy \ref{cond:rates}. While updating the nuisance estimators at every time step is theoretically valid under these conditions, doing so may be computationally burdensome in practice. We therefore recommend batch updates as a practical compromise between statistical and computational efficiency.

Condition \ref{cond:lindeberg} is a standard Lindeberg-type condition ensuring that no single observation dominates the asymptotic variance. In the present setting, this amounts to controlling the tails of the increments $\varphi_{t}$. This condition can be verified under mild moment assumptions, for example by requiring either bounded increments of the efficient influence function or a bounded fourth moment. Condition~\ref{cond:weight} requires that the exploration rate  does not diverge from the evaluation policy too quickly.

An intermediate regime arises when the conditional variance converges to a random variable that depends on the realized trajectory but not on the horizon $T$. In this setting, we are able to avoid per-round variance estimation entirely and can instead use the realized quadratic variation of the influence function in order to create an asymptotically normal test statistic. 

\begin{theorem}[CLT for self-normalized one-step estimator]
\label{thm:self-normalized} Let $(\hat\theta_t, \hat\eta_t, \hat \alpha_t)$ denote $\Filt$ measurable estimators of $(\theta_{P_e}, \eta_{P_e}, \alpha_{P_e})$ Choose $m_{T} \to\infty$ with $m_{T}= o(T)$ and set $(\bar\theta,\bar\eta,\bar\alpha) :=(\hat\theta_{m_T},\hat\eta_{m_T},\hat\alpha_{m_T})$. Recall that $w_{t} = \frac{\pi_{e}(A_{t}|C_t)}{\pi_{t}(A_t|C_t)}$ and  $\varphi_{t} := \varphi_{P_e}(Z_t)$ as defined in \cref{eqn:oracle_EIF}
and set $\bar{\Psi}_{T} = m_T^{-1}\sum_{t=m_T+1}^{2m_T}w_{t}\left(m_{\bar\theta}(Z_t)-\partial_\theta \ell_{\bar\theta,\bar\eta}(Z_t)[\bar\alpha]\right)$. Define $A_T = \sum_{t=1}^{T}w_{t}$ and let 
\begin{equation} \small
    \hat{S}_{T} := A_{T}^{-1} \sum_{t=1}^{T} w_{t}\left( m_{\hat\theta_t}(Z_t)-\partial_\theta \ell_{\hat\theta_t,\hat\eta_t}(Z_t)[\hat\alpha_t]\right),
\end{equation}
\begin{equation}
\hat{V}_{T} :=A_{T}^{-2}  \sum_{t=2m_{T}+1}^{T}  w_{t}^{2}\left( m_{\hat\theta_t}(Z_t)-\partial_\theta \ell_{\hat\theta_t,\hat\eta_t}(Z_t)[\hat\alpha_t]-\bar{\Psi}_{T}\right)^{2}.
\end{equation}
\iffalse and 
\begin{equation}
    \hat{V}_{T} := \frac{1}{T-m_{T}} \sum_{t=m_{T}+1}^{T} w_{t}^{2}\left( m_{\bar\theta}(Z_t)-\partial_\theta \ell_{\bar\theta,\bar\eta}(Z_t) -\bar{\Psi}_{T}\right)^{2}. 
\end{equation} \fi
Assume conditions~\ref{cond:rates}, \ref{cond:lindeberg}, and \ref{cond:weight} of \cref{thm:clt} hold. Furthermore, assume that
\begin{enumerate}[label=(\thetheorem\alph*), ref=(\thetheorem\alph*), itemsep=0pt, topsep=2pt]
\item 
$\|\bar\theta-\theta_{P^e}\|_\Theta=o_p(m_{T}^{-1/4})$, $ \|\bar\eta-\eta_{P^e}\|_\Upsilon=o_p(m_{T}^{-1/4})$, and $ \|\bar \alpha-\alpha_{P_e}\|_\Theta=o_p(m_{T}^{-1/4})$;
\item $\sup_{t\in [T]}\sup_{{c,a} \in \mathcal{C}\times \mathcal{A}}  \frac{\pi_{e}(a\mid c)}{\pi_t(a\mid c)}  = o(m_{T}^{1/2})$ almost surely; \label{cond:exploration_self}
\item  $\frac{1}{T} \sum_{t=1}^{T} \E[ w_{t}^{2}\varphi_{t}^{2}\mid \Filt] \overset{p}{\to} \kappa$ to some random variable $\kappa$ which is finite and non-zero almost surely. \label{cond:finite_rv}
\end{enumerate}
Then under Assumptions~\ref{assumption:distribution}--\ref{assumption:envelope}, $
\hat{V}_{T}^{-1/2} \left(\hat{S}_{T} - \Psi_{e}(\theta_{P_e})\right) \overset{d}{\to} N(0,1)$.
\end{theorem}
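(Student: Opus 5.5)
The plan is to reduce the self-normalized statistic to a martingale sum of oracle influence-function increments and then apply a stable martingale CLT with random norming.

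\textbf{Step 1: reduction to the oracle martingale.} Write $\Psi_0:=\Psi_e(\theta_{P_e})$ and $\hat\varphi_t := m_{\hat\theta_t}(Z_t)-\partial_\theta \ell_{\hat\theta_t,\hat\eta_t}(Z_t)[\hat\alpha_t]-\Psi_0$. Then
\[
A_T(\hat S_T-\Psi_0)=\sum_{t=1}^T w_t\varphi_t+\sum_{t=1}^T w_t(\hat\varphi_t-\varphi_t).
\]
Decompose the second sum as a martingale part plus a drift part:
\[
\sum_t \bigl\{w_t(\hat\varphi_t-\varphi_t)-\E[w_t(\hat\varphi_t-\varphi_t)\mid\Filt]\bigr\}+\sum_t \E[w_t(\hat\varphi_t-\varphi_t)\mid\Filt].
\]
By the change-of-measure identity, $\E[w_t\hat\varphi_t\mid\Filt]=\Psi_e(\hat\theta_t)-\Psi_0-\E_{P_e}[\partial_\theta\ell_{\hat\theta_t,\hat\eta_t}[\hat\alpha_t]]$, since $\hat\theta_t,\hat\eta_t,\hat\alpha_t$ are $\Filt$-measurable. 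By \cref{thm:pe_expansion}, this equals $-H_{P_e}[\hat\alpha_t-\alpha_{P_e},\hat\theta_t-\theta_{P_e}]$ plus second-order remainders. Summing over $t$ and using Cauchy--Schwarz with \ref{cond:rates} bounds the drift by $T\cdot o_p(T^{-1/2})=o_p(\sqrt T)$.

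The martingale part is handled by the same route as in the proof of \cref{thm:clt}. Its predictable quadratic variation is bounded by $\sum_t \E[w_t^2(\hat\varphi_t-\varphi_t)^2\mid\Filt]$. Using the weight bound \ref{cond:weight}, the envelope assumption and \ref{cond:rates}, this is $o_p(T)$, so the martingale part is $o_p(\sqrt T)$ by Lenglart's inequality. Hence $A_T(\hat S_T-\Psi_0)=\sum_t w_t\varphi_t+o_p(\sqrt T)$.

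\textbf{Step 2: the random-limit CLT.} By \ref{cond:finite_rv} and \ref{cond:lindeberg}, $\sum_t w_t\varphi_t/\sqrt T$ converges stably (mixing) to $\kappa^{1/2}N$, with $N\sim N(0,1)$ independent of $\kappa$. This is the Hall--Heyde stable martingale CLT. The nested-filtration condition is avoided by the usual truncation argument, or by invoking the version for $\sigma$-fields generated by the whole sequence. Stability then gives
\[
\Bigl(\sum_t w_t\varphi_t\Bigr)\Big/\sqrt{\textstyle\sum_t \E[w_t^2\varphi_t^2\mid\Filt]}\dto N(0,1),
\]
and this persists under any norming $\hat\kappa_T$ with $\hat\kappa_T/\kappa\pto1$. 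It therefore suffices to show
\[
\frac1T\sum_{t>2m_T} w_t^2\bigl(\hat\varphi_t+\Psi_0-\bar\Psi_T\bigr)^2\pto\kappa .
\]
Note that $\hat V_T^{-1/2}(\hat S_T-\Psi_0)=A_T(\hat S_T-\Psi_0)/\sqrt{A_T^2\hat V_T}$, so the $A_T$ factors cancel.

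\textbf{Step 3: consistency of the realized quadratic variation.} I expect this to be the main obstacle. Since $m_T=o(T)$ and the summands are controlled, dropping the first $2m_T$ terms costs $o_p(1)$ after dividing by $T$. Next, $\bar\Psi_T-\Psi_0\pto0$. The reason is that $\bar\Psi_T$ uses the frozen estimators from time $m_T$ on the independent block $(m_T,2m_T]$. Its conditional mean is $\Psi_0$ plus an $o_p(m_T^{-1/2})$ bias, by \cref{thm:pe_expansion} and the $o_p(m_T^{-1/4})$ rates. Its conditional variance is $O(m_T^{-1}\sup w_t)=o(m_T^{-1/2})$ by \ref{cond:exploration_self}. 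Expanding the square leaves three pieces:
\begin{enumerate}[label=(\roman*)]
\item $T^{-1}\sum_t w_t^2\varphi_t^2$;
\item cross terms with $(\hat\varphi_t-\varphi_t)$ and $(\Psi_0-\bar\Psi_T)$;
\item squared error terms.
\end{enumerate}
For (i), the centered version $T^{-1}\sum_t\bigl(w_t^2\varphi_t^2-\E[w_t^2\varphi_t^2\mid\Filt]\bigr)$ is $o_p(1)$ by a truncated weak law of large numbers for martingale differences, which the Lindeberg condition \ref{cond:lindeberg} supplies. Combined with \ref{cond:finite_rv}, this gives convergence to $\kappa$. The error terms in (iii) need bounds like
\[
T^{-1}\sum_t w_t^2(\hat\varphi_t-\varphi_t)^2\le \sup_t w_t\cdot T^{-1}\sum_t w_t(\hat\varphi_t-\varphi_t)^2 .
\]
The conditional mean of the right-hand average is an $L^2(P_e)$ error, which is $O_p(T^{-1/2})$ up to $o$-terms by \ref{cond:rates} and the envelope, and $\sup_t w_t=O(T^{1/3})$. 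The term involving $(\bar\Psi_T-\Psi_0)^2 T^{-1}\sum_t w_t^2$ requires $(\bar\Psi_T-\Psi_0)^2\sup w=o_p(1)$, which follows from \ref{cond:exploration_self}. The cross terms in (ii) then vanish by Cauchy--Schwarz. The delicate bookkeeping is matching the weight growth rates against the estimation rates in these products, and that is where I would concentrate effort.
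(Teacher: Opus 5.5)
Your Steps 2 and 3 follow the paper's proof. Your Step 1 takes a different and somewhat cleaner route. One step inside Step 3 is not justified as written.

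\textbf{Where you match the paper and where you differ.} The following are exactly what the paper does:
the cancellation of the $A_T$ factors;
the stable martingale CLT with random limit $\kappa$ (\cref{prop:self_normalized_plugin}, via Hall--Heyde Theorem~3.2);
the Lindeberg truncation matching $T^{-1}\sum_t w_t^2\varphi_t^2$ to $V_T:=T^{-1}\sum_t\E[w_t^2\varphi_t^2\mid\Filt]$;
and the bookkeeping for the self-normalizer.
That bookkeeping consists of $\bar\Psi_T-\Psi_0=O_p(\sqrt{K_T/m_T})+o_p(m_T^{-1/2})$, an error term $K_T\cdot o_p(T^{-1/2})$ from the $L^2(P_e)$ bound of \cref{lemma:epe_f}, and \ref{cond:exploration_self} entering only through $K_T(\bar\Psi_T-\Psi_0)^2=o_p(1)$, where $K_T:=\sup_{t,c,a}\pi_e/\pi_t$.

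The difference is in how $\sum_t w_t(\hat\varphi_t-\varphi_t)$ is handled. Write $F_t(z):=m_{\hat\theta_t}(z)-\partial_\theta\ell_{\hat\theta_t,\hat\eta_t}(z)[\hat\alpha_t]$ and $F_0$ for its oracle version.
The paper reruns the remainder argument of \cref{thm:clt} with $\hat\sigma_t\equiv 1$. It Taylor-expands $F_t-F_0$ pointwise into three first-order terms, each a martingale difference by the Riesz identity, Neyman orthogonality and the first-order condition respectively. The pointwise second-order remainder is bounded by Cauchy--Schwarz.
You instead split off the compensator $\sum_t\E_{P_e}[F_t-F_0]$ and bound it once at the population level via \cref{thm:pe_expansion}. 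You then control the martingale part by Lenglart, with predictable variation at most $K_T\sum_t\E_{P_e}[(F_t-F_0)^2]=o_p(K_T\sqrt T)$.

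What each route buys: yours needs only $K_T=O(\sqrt T)$ for this step. It also never uses the claim in \cref{lemma:remainder} that $T^{-1}\sum_t w_t^2C(Z_t)^2=O_p(1)$. That claim is argued there by an uncentered Chebyshev bound, although its compensator $T^{-1}\sum_t\E_{P_e}[(\pi_e/\pi_t)C^2]$ can be of order $K_T$. The paper's route has the advantage of a single lemma reused for both theorems.

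\textbf{The gap.} The problem is your dismissal of the first $2m_T$ oracle terms: \emph{$m_T=o(T)$ and the summands are controlled} does not suffice.
The only available bound is $\E[w_t^2\varphi_t^2\mid\Filt]\le K_T\E_{P_e}[\varphi_{P_e}^2]$, which gives $O_p(m_TK_T/T)$. This need not vanish under the stated rates. For example, $m_T=T^{9/10}$ with $K_T\asymp T^{1/3}$ satisfies \ref{cond:weight} and \ref{cond:exploration_self}, yet the bound diverges.

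The missing idea, which the paper uses, is to apply \ref{cond:finite_rv} at horizon $2m_T$. For a single martingale sequence (not a horizon-indexed array), $V_{2m_T}\pto\kappa$. Hence $T^{-1}\sum_{t\le 2m_T}\E[w_t^2\varphi_t^2\mid\Filt]=(2m_T/T)V_{2m_T}=o_p(1)$. The bound $\Pr(X_T>\epsilon)\le b/\epsilon+\Pr(A_T>b)$, for a nonnegative adapted sum $X_T$ with compensator $A_T$, then transfers this to the realized sum.

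This fixed-sequence reading is also what makes the nesting hypothesis of the stable CLT in your Step 2 automatic. Truncation handles the moment conditions there, not the nesting. For a genuinely horizon-dependent design, the discarded block can carry a non-vanishing share of $\kappa$, and the self-normalizer is then inconsistent.

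Two smaller corrections:
\begin{enumerate}
\item The block $(m_T,2m_T]$ is not independent. What you actually use is that the frozen score $m_{\bar\theta}-\partial_\theta\ell_{\bar\theta,\bar\eta}[\bar\alpha]$ is $\mathcal F_{m_T-1}$-measurable, so the centered summands are martingale differences.
\item The stable limit $\kappa^{1/2}N$ is not mixing, since it depends on $\kappa$.
\end{enumerate}
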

Note that the first $2m_{T}$ observations are only used to create $\bar{\Psi}_{T}$ which centers the empirical variance and is not a primary driver of asymptotic rates. Condition~\ref{cond:exploration_self} implies that the rate $m_{T}$ is allowed to grow at is driven by the rate at which the importance weights are allowed to inflate. If the weights are uniformly bounded over time, $m_{T}$ can be chosen to grow at $\log \log T$ rates or slower. On the other hand, if weights grow at $T^{1/3}$ rates, $m_{T}$ must be chosen to grow at $T^{2/3}$.  We note a few example cases where condition~\ref{cond:finite_rv} is satisfied, but where $\kappa$ \emph{is not} a deterministic constant in the following examples. Intuitively, a sufficient condition for this to hold is that there is a single asymptotic regime that dominates over the sample with only sublinear rounds of adaptive sampling.

\begin{example}[Deterministic sublinear exploration]
Assume that an experimenter chooses the policy $\pi_{t}(\cdot |c)$ adaptively the first $T_0 = \lfloor T^{\alpha} \rfloor$ for $\alpha \in (0,1)$ rounds and then freezes it in all subsequent rounds. In this case, $\Pi_{t} = {\Pi}_{T_0}$ for all $t > T_0$. 
In this case, the conditional variance is constant for all $t>T_{0}$ and will converge to ${\sigma}_{T_0}^{2}$ so long as $\sigma_{t}$ is uniformly bounded across $t \in [T]$. Nonetheless, it is still a \emph{random quantity}. 
\end{example}

\iffalse
\begin{example}[Freeze policy after stopping time]
Suppose there exists an almost surely finite stopping time $\tau$ such that $\pi_{T} = \pi_{\tau}$ for all $t > \tau$. It follows that $\sigma_{t}^{2} = \sigma_{\tau}^{2}$ for all $t > \tau$. Thus, the variance is constant over time on the terminal block and will converge. Nonetheless, the policy is frozen according to a random quantity $\sigma_{\tau}^{2}$.
\end{example}
\fi 
\begin{example}[Policy switch at stopping times]
For each $T$, denote a set of stopping times such that
$0 < \tau_{1,T}< \tau_{2,T} < ... < \tau_{K_{T}} < T$. Suppose that policies change at stopping times and are piecewise constant between stopping times. That is $\Pi_{t} = \Pi_{\tau_{k,T}} $ for all $t \in [\tau_{k,T}, \tau_{k+1,T})$. In our setting, the variance within each block $[\tau_{k,T}, \tau_{k+1,T})$ is constant and almost surely bounded. Therefore, a sufficient condition for the asymptotic variance to be an almost surely finite random variable is that $T - \max_{k \in [K_{T}]} \lvert \tau_{k+1,T} - \tau_{k,T} \rvert =o_{p}(T)$.
\end{example}

\subsection{Estimating Conditional Variances} \label{subsec:cond_var}
In cases where condition \ref{cond:finite_rv} does not hold, whch can occur where there is adaptive sampling in every round $t\in [T]$, we must instead rely on \cref{thm:clt} which requires us to construct a sequence of $\mathcal{F}_{t-1}$ measurable estimators $\{\hat{\sigma}_{t}\}_{t=1}^{T}$. A natural strategy might be to directly estimate the functional,
$$ \text{Var}\left(w_{t} \varphi_{t }\mid \mathcal{F}_{t-1}\right) = \E\left[w_{t}^{2} \varphi_{t}^{2} \mid\mathcal{F}_{t-1}  \right] - \E[w_{t} \varphi_{t}\mid \mathcal{F}_{t-1}]^{2} = \E\left[w_{t}^{2} \varphi_{t}^{2} \mid\mathcal{F}_{t-1}  \right].$$
This approach would be burdensome because of the dependence of the estimand on several nuisance parameters: $\theta_{P_e}, \eta_{P_e}, \alpha_{P_e}, P_{C}, P_{Y|C,A}$. Instead, we construct $\hat\sigma_t$ by reweighting past observations to emulate draws from the evaluation policy at time $t$. This effectively reduces the problem to controlling a sequence of empirical averages over a policy class, which we handle via a maximal inequality.

We choose to  target the following sequence of observable quantities:
\begin{align*}
    \text{Var}\left(w_{t} \hat\varphi_{t }\mid \mathcal{F}_{t-1}\right) = \E\left[w_{t}^{2} \hat\varphi_{t}^{2} \mid\mathcal{F}_{t-1}  \right] - \E[w_{t} \hat\varphi_{t}\mid \mathcal{F}_{t-1}]^{2} 
= \E\left[w_{t}^{2} \hat\varphi_{t}^{2} \mid\mathcal{F}_{t-1}  \right] - \E_{P_e}[\hat\varphi_{t}]^{2}.
\end{align*} 
At the oracle EIF, $\E_{P_e}[\varphi_{t}] =0$ so the second term should be negligible. Our goal then is finding a sufficiently good estimate for $\E\left[w_{t}^{2} \hat\varphi_{t}^{2}\mid \mathcal{F}_{t-1}\right]$. Now, note that for any $s<t$, $\hat{\varphi}_{s}(\cdot)  :=
m_{\hat{\theta_s}}(\cdot)
-
\hat{\Psi}_{s}
-
\dot\ell_{\hat\theta_s,\hat\eta_s}(\cdot)[\hat \alpha_s])$ is composed of only $\mathcal{F}_{s-1}$ measurable known plug-in terms. Recalling that
\[
\left\{\, f\left(Z_s\right) - \E\!\left[f(Z_s)\mid \mathcal F_{s-1}\right] \,\right\}_{s=1}^t
\]

is a martingale difference sequence for any $\mathcal{F}_{s-1}$ measurable function $f$, consider some counterfactual policy $\pi$ and choose $f(z) :=\frac{ \pi_{s}(a \mid c)  }{ \pi(a \mid c)  } w_s^2\hat{\varphi}_{s}^{2}(z) =\frac{ \pi_{e}^{2}(a \mid c)  }{ \pi(a \mid c) \pi_{s}(a \mid c) } \hat{\varphi}_{s}^{2}(z) $. Intuitively, this map will change the measure to target the expectation of the reference policy because
\begin{align*}
    \E\left[f(Z_s) \mid \mathcal{F}_{s-1}\right] &:= \E\left[\frac{ \pi_{e}^{2}(A_s\mid C_s)  }{ \pi(A_s \mid C_s) \pi_{s}(A_s \mid C_s) } \hat{\varphi}_{s}^{2}(Z_s) \mid \mathcal{F}_{s-1}\right]\\
    &= \E_{A\sim \pi}\left[\frac{\pi_{e}^{2}(A \mid C) }{\pi^{2}(A \mid C) }\hat{\varphi}_{s}^{2}(Z) \right].
\end{align*}
With this motivation in mind, a tempting way to target the quantity $\E\left[w_{t}^{2} \hat\varphi_{t}^{2}\mid \mathcal{F}_{t-1}\right]$ would be to define the estimate
\begin{equation} \label{eqn:var_estimate}
    \hat{\sigma}^{2}_{t} : = \frac{1}{t-1}\sum_{s=1}^{t-1} \frac{ \pi_{e}^{2}(A_{s} \mid C_s)  }{ \pi_{t}(A_s \mid C_s) \pi_{s}(A_s \mid C_s) } \hat{\varphi}_{s}^{2}(Z_s).
\end{equation}
We justify the use of this estimator in \cref{thm:var_estimate}. The key technical challenge apart from needing to derive uniform bounds over the policy class is that we are using prior influence function increments $\hat\varphi_{s}$ to target $\hat\varphi_{t}$ so we need to demonstrate these quantities converge sufficiently fast. 
\begin{theorem} \label{thm:var_estimate}
Assume all conditions of \cref{thm:clt} are satisfied aside from condition~\ref{cond:consistency}. Assume there exists a $c>0$ such that $\min(\sigma_{t} ,\hat{\sigma}_{t}) >c$ for all $t \in[T]$ almost surely. Recall that $\hat{\varphi}_{s}(\cdot)  :=
m_{\hat{\theta_s}}(\cdot)
-
\hat{\Psi}_{s}
-
\dot\ell_{\hat\theta_s,\hat\eta_s}(\cdot)[\hat \alpha_s]$, where $\hat{\Psi}_{s}$ is any $\mathcal{F}_{s-1}$ measurable estimator such that $
\frac1T\sum_{s=1}^T(\hat\Psi_s-\Psi_e)^2=o_p(T^{-1/2})
$ and $\sup_{s \in[T]}|\hat\Psi_{s}| = O_p(1).$\footnote{Note that we provide an explicit construction of an estimator $\hat\Psi_t$ fulfilling these requirements in Appendix~\ref{appendix:cond_var_detail}.} Moreover assume that there  exists a deterministic, pointwise-seperable  class of policies $\Pi_{T}$ such that $\pi_t \in \Pi_{T}$ almost surely for all $t\in[T]$ and 
\begin{enumerate}[label=(\thetheorem\alph*), ref=(\thetheorem\alph*)]
\item \label{cond:thm4entropy} There exists $p \in [0,2)$ such that for all $\epsilon \in (0,1)$, 
\[ \log N(\epsilon, \log \Pi_{T}, \|\cdot\|_\infty)
\lesssim
\begin{cases}
\log(e/\epsilon), & p = 0, \\
\epsilon^{-p}, & p \in (0,2),
\end{cases},\] 
where $\log \Pi_{T} := \{ (c,a) \mapsto \log \pi(a \mid c) : \pi \in \Pi_{T}\}$.
\item $\sup_{\pi \in \Pi}\sup_{{c,a} \in \mathcal{C}\times \mathcal{A}} \frac{\pi_{e}(a\mid c)}{\pi(a\mid c)} =O(T^{1/8})$.  \label{cond:thm4overlap}
\end{enumerate}
Then, if  $\hat{\sigma}^{2}_{t}$ is constructed as in \cref{eqn:var_estimate} with $\hat{\sigma}_1^2$ initialized as a fixed positive finite constant, condition \ref{cond:consistency} is satisfied. That is, 
$\frac{1}{T}\sum_{t=1}^{T}  \sigma_{t}^{2}/\hat{\sigma_{t}}^{2} \overset{p}{\to} 1$. 
\end{theorem}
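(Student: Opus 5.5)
Since $\sigma_t,\hat\sigma_t>c$ almost surely, we have
\[
\Bigl|\tfrac{\sigma_t^2}{\hat\sigma_t^2}-1\Bigr| \le c^{-2}\,|\sigma_t^2-\hat\sigma_t^2|,
\]
so it suffices to prove $\frac1T\sum_{t=1}^T|\hat\sigma_t^2-\sigma_t^2|\pto 0$. For a generic policy $\pi\in\Pi_T$ and $s$, introduce the oracle target
\[
V(\pi):=\E_{C\sim P_C,\,A\sim\pi}\!\left[\frac{\pi_e^2(A\mid C)}{\pi^2(A\mid C)}\varphi_{P_e}^2(Z)\right],
\]
so that by \cref{assumption:distribution} $\sigma_t^2=V(\pi_t)$. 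Its plug-in analogue $V_s(\pi)$ replaces $\varphi_{P_e}$ with $\hat\varphi_s$. Writing
\[
G_{t-1}(\pi):=\frac{1}{t-1}\sum_{s=1}^{t-1}\frac{\pi_e^2(A_s\mid C_s)}{\pi(A_s\mid C_s)\pi_s(A_s\mid C_s)}\hat\varphi_s^2(Z_s),
\]
we have $\hat\sigma_t^2=G_{t-1}(\pi_t)$. We then decompose
\[
\hat\sigma_t^2-\sigma_t^2=\underbrace{G_{t-1}(\pi_t)-\tfrac{1}{t-1}\textstyle\sum_{s<t}V_s(\pi_t)}_{\text{(I) martingale term}}+\underbrace{\tfrac{1}{t-1}\textstyle\sum_{s<t}\bigl(V_s(\pi_t)-V(\pi_t)\bigr)}_{\text{(II) nuisance drift}}.
\]
Because $\pi_t$ is $\mathcal F_{t-1}$-measurable and hence depends on the same data, term (I) must be controlled uniformly: $|(\mathrm{I})|\le\sup_{\pi\in\Pi_T}|M_{t-1}(\pi)|$, where $M_{t-1}(\pi)$ is the average of the martingale differences $f_{s,\pi}(Z_s)-\E[f_{s,\pi}(Z_s)\mid\mathcal F_{s-1}]$. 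The change-of-measure computation preceding \eqref{eqn:var_estimate} shows that the conditional mean is exactly $V_s(\pi)$.

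Term (II) is handled deterministically given the estimators. Expanding $\hat\varphi_s^2-\varphi_{P_e}^2=(\hat\varphi_s-\varphi_{P_e})(\hat\varphi_s+\varphi_{P_e})$ and applying Cauchy--Schwarz gives
\[
|V_s(\pi)-V(\pi)|\lesssim \|\pi_e/\pi\|_\infty\,\|\hat\varphi_s-\varphi_{P_e}\|_{L^2(P_e)}\bigl(\|\hat\varphi_s-\varphi_{P_e}\|_{L^2(P_e)}+\|\varphi_{P_e}\|_{L^2(P_e)}\bigr),
\]
where one factor of the weight is absorbed by changing measure from $\pi$ to $\pi_e$. The envelope assumption (\cref{assumption:envelope}) and Fr\'echet differentiability bound $\|\hat\varphi_s-\varphi_{P_e}\|_{L^2(P_e)}$ by
\[
|\hat\Psi_s-\Psi_e|+O\bigl(\|\hat\theta_s-\theta_{P_e}\|+\|\hat\eta_s-\eta_{P_e}\|+\|\hat\alpha_s-\alpha_{P_e}\|\bigr).
\]
Condition~\ref{cond:rates}, the assumption on $\hat\Psi_s$, and Jensen's inequality make the time average of the squared errors $o_p(T^{-1/2})$. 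The $O(T^{1/8})$ overlap from \ref{cond:thm4overlap}, which is in fact needed to a higher power for the fourth-moment-type terms, costs at most $T^{1/4}$. A Ces\`aro/Kronecker-type argument then gives $\frac1T\sum_t|(\mathrm{II})|=o_p(1)$. Early rounds with small $t$ contribute $O(\log T/T)$ times bounded quantities, using $\sup_s|\hat\Psi_s|=O_p(1)$ and the fixed initialization $\hat\sigma_1^2$.

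The main obstacle is the uniform martingale bound on (I). I would apply a maximal inequality for martingale empirical processes indexed by $\pi$, such as a Freedman-type chaining bound with sequential bracketing. The key observation is that $\pi\mapsto f_{s,\pi}$ depends on $\pi$ only through $1/\pi=\exp(-\log\pi)$. On $\log\Pi_T$ with sup-norm, the map $\log\pi\mapsto 1/\pi$ is Lipschitz with constant $\|\pi_e/\pi\|_\infty\cdot(\pi_e^{-1})$ after multiplying by $\pi_e^2/\pi_s$. Hence sup-norm covers of $\log\Pi_T$ transfer, at a multiplicative cost of $T^{1/8}$, to covers of $\{f_{s,\pi}\}$ that are uniform in $s$ and measurable with respect to the past. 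The entropy condition~\ref{cond:thm4entropy} with $p<2$ keeps the Dudley integral finite. The envelope of $f_{s,\pi}$ is at most $T^{1/4}\hat\varphi_s^2$ by \ref{cond:thm4overlap} applied to both $\pi$ and $\pi_s$, and its conditional second moment is $O(T^{3/8})$ times a fourth moment of $\hat\varphi_s$. This yields
\[
\sup_\pi|M_{t-1}(\pi)|=O_p\!\left(T^{c}(t-1)^{-1/2}\right) \quad\text{with } c<1/2,
\]
up to logarithmic factors, uniformly over $t$ via a union bound or peeling over dyadic blocks of $t$. Averaging over $t$ gives $\frac1T\sum_t T^{c}t^{-1/2}=O(T^{c-1/2})\to0$. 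Separability of $\Pi_T$ handles measurability. The delicate bookkeeping lies in matching exponents: the $T^{1/8}$ rate is chosen precisely so that the envelope and Lipschitz blow-ups still leave a vanishing $T^{c-1/2}$.
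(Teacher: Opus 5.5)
Your architecture matches the paper's proof. Your (I) is the paper's $R_1$, with compensators $V_s(\pi)=G_s(\pi)$ computed for fixed $\pi$ and then evaluated at $\pi_t$. Your (II) is exactly the paper's $R_2+R_3$, and comparing each $\hat\varphi_s$ directly with $\varphi_{P_e}$ is a slightly cleaner route than the paper's detour through $\hat\varphi_t$ and Hardy's inequality. You also use the same log-policy entropy and Lipschitz-in-$\log\pi$ device. The drift term is handled correctly.

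\textbf{The gap is in the key step for (I).} Your summands are bounded only by $f_{s,\pi}(Z_s)\le K_T^2\hat\varphi_s^2(Z_s)\lesssim K_T^2C(Z_s)^2$, and $C$ has just four moments under $P_e$. So the increments have a finite conditional second moment and in general nothing more. Freedman's inequality, Bernstein-norm or sequential-bracketing chaining bounds, and the maximal inequality the paper actually invokes (Fact~\ref{fact:EMI}, which requires $g_s\in[0,B]$) all need bounded or exponentially-tailed increments. With only Chebyshev-type tails, you cannot pay for a union bound over $t\le T$ or over an $\exp(\epsilon^{-p})$-sized net. Hence your claimed uniform rate $O_p(T^{c}(t-1)^{-1/2})$ with $c<1/2$ is unjustified. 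Relatedly, $|\hat\varphi_s|\le LC$ holds only on an event of probability tending to one, since it relies on $\sup_s\|\hat\alpha_s\|_\Theta=O_p(1)$ and $\sup_s|\hat\Psi_s|=O_p(1)$. You therefore need a predictable stopping (localization) before applying any moment or maximal inequality, as the paper does.

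\textbf{The missing idea is truncation.} Split each summand using the $\pi$-free indicator $\mathbf 1\{w_s\hat\varphi_s^2(Z_s)\le M_T\}$.
\begin{itemize}
\item The retained part lies in $[0,B_T]$ with $B_T=K_TM_T$ and is $eB_T$-Lipschitz in $d(\pi,\pi')=\|\log\pi-\log\pi'\|_\infty$. The bounded maximal inequality with $\delta=T^{-2}$ then gives $\sup_{\pi}|\cdot|\lesssim B_T\sqrt{\log T/(t-1)}$ simultaneously for all $t$.
\item The discarded part needs no uniformity argument. Because $w_{\pi,s}\le K_T$, it is dominated by the nonnegative, $\pi$-free sequence $\zeta_s=K_Tw_s\hat\varphi_s^2\mathbf 1\{w_s\hat\varphi_s^2>M_T\}$. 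Its compensator satisfies $\E[\zeta_s\mid\mathcal F_{s-1}]\le K_T^2L^4\E_{P_e}[C^4]/M_T$.
\end{itemize}
In your $L^1$ formulation, (I) is then $O_p(K_TM_T\sqrt{\log T/T}+K_T^2/M_T)$. Such an $M_T$ exists only if $K_T^3\sqrt{\log T}=o(T^{1/2})$; for example, $K_T=O(T^{1/8})$ with $M_T=T^{5/16}$ works. This balance is where the overlap rate genuinely enters, not the generic ``envelope times Lipschitz'' exponent count in your last paragraph.
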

Condition \ref{cond:thm4entropy} controls the complexity of the policy class through  an entropy bound; in practice, such conditions are satisfied by policy classes driven by parametric or other low-complexity models.  We emphasize that this restrictions only applies to the \emph{policy} class, nuisance and parameter estimates can still rely on flexible black box approaches. Condition~\ref{cond:thm4overlap}'s stronger overlap requirement relative to Condition~\ref{cond:weight} is the price of uniform empirical-process control over $\Pi$. However, it remains weaker than the assumptions in previous adaptive $M$-estimation work \citep{zhang2021mestimators,guo2025statisticalinferencemisspecifiedcontextual,leiner2026adaptiveoffpolicyinferencemestimators}, which require importance weights that are uniformly bounded in time. %by a constant.

%% file: experiments.tex
Across all experiments, our primary goal is to identify regimes in which classical asymptotic methods fail, and to illustrate when the proposed methodology becomes necessary. To this end, we study three representative forms of adaptivity. First, we consider \emph{deterministic exploration} followed by commitment. %, in which the analyst explores for an initial $T_0$ rounds and then commits to a fixed strategy; we examine both sublinear exploration ($T_0 = o(T)$) and linear exploration ($T_0 = \lfloor \alpha T \rfloor$ for $\alpha \in (0,1)$), where the sampling distribution stabilizes after exploration but the limiting variance may remain random and path-dependent. 
Second, we consider \emph{piecewise constant} policies with stochastic policy updates, where $\pi_t = \pi_{t-1}$ except for random updates occurring with probability $p$. Finally, we consider \emph{fully adaptive} policies in which $\pi_t$ is updated at every time step based on past observations, leading to highly nonstationary sampling distributions. We discuss a dynamic pricing application as a prototypical example of the utility of our approach but note additional simulations for a multi-armed bandit setup in Appendix~\ref{appendix:bandit_details}.

\textbf{Dynamic pricing model} \label{subsec:dynamic_pricing}
We consider a sequential pricing problem in which, at each time period \(t \in [T]\), the decision-maker selects a price \(p_t \in \mathcal{A}\) and then observes a context \(C_t \in \mathcal{C}\) corresponding to customer attributes and a reward \(Y_t \in \{0,1\}\) corresponding to either customer renewal ($Y_t = 1$) or attrition ($Y_t=0$).\footnote{This setup differs slightly from the standard contextual bandit formulation, since the action space is restricted to context-independent pricing policies. We focus on this setting because fully personalized pricing (i.e., price discrimination based on customer attributes) is often infeasible in practice due to regulatory, fairness, or operational constraints.} We model renewal decisions using a random utility framework. Let $\varepsilon_{0t}, \varepsilon_{1t} \overset{iid}{\sim} \mathrm{Gumbel}(0,1)$ and define, 
\[
U_t^{1}
=
g^\star + \tau g_0(C_t) - \beta p_t + \varepsilon_{1t} \text{, } \qquad U_t^{0}
=
\varepsilon_{0t}.
\]
where $U_t^{1}$ denotes the utility of renewing and $U_t^{0}$ denotes the utility of attrition for individual $t$. The parameter \(\beta > 0\) governs price sensitivity and is the primary target of inference. The function \(g_0(X)\) captures baseline heterogeneity in demand, while \(g^\star\) controls the overall level of demand. $\varepsilon_{0t}$ and  $\varepsilon_{1t}$ are used to enforce idiosyncratic variation in the utility function of individuals. The observed outcome is 
\[
Y_t = \mathbf{1}\{U_t^{1} \ge U_t^{0}\} =\mathbf{1}\{g^\star + \tau g_0(C_t) - \beta p_t + \varepsilon_{1t} - \varepsilon_{0t}\ge 0 \}.
\]
The difference of two independent standard Gumbel random variables follows a logistic distribution. Consequently, $\varepsilon_{1t} - \varepsilon_{0t} \sim \mathrm{Logistic}(0,1)$, and the renewal probability is
$
\Pr(Y_t = 1 \mid C_t, p_t)
=
\Lambda\!\left(
g^\star + \tau g_0(C_t) - \beta p_t
\right)$,
where \(\Lambda(u) = (1 + e^{-u})^{-1}\) is the sigmoid function.

We assume the decision-maker chooses the price $p_t$ at each step based on past data in an attempt to maximize expected revenue over the population. In particular, they attempt to approach an optimal quantity $p^{\star}$ defined as
\[
p^\star
=\arg\max_{p \in \mathcal{A}}\E_{P_C}\left[
p \cdot \Lambda\big(g^\star + \tau g_0(C) - \beta p\big)\right].
\]

\begin{figure}[htbp]
    \centering
    \includegraphics[width=0.6\textwidth]{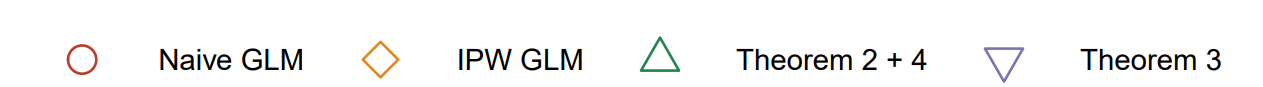}
    \includegraphics[width=1.0\textwidth]{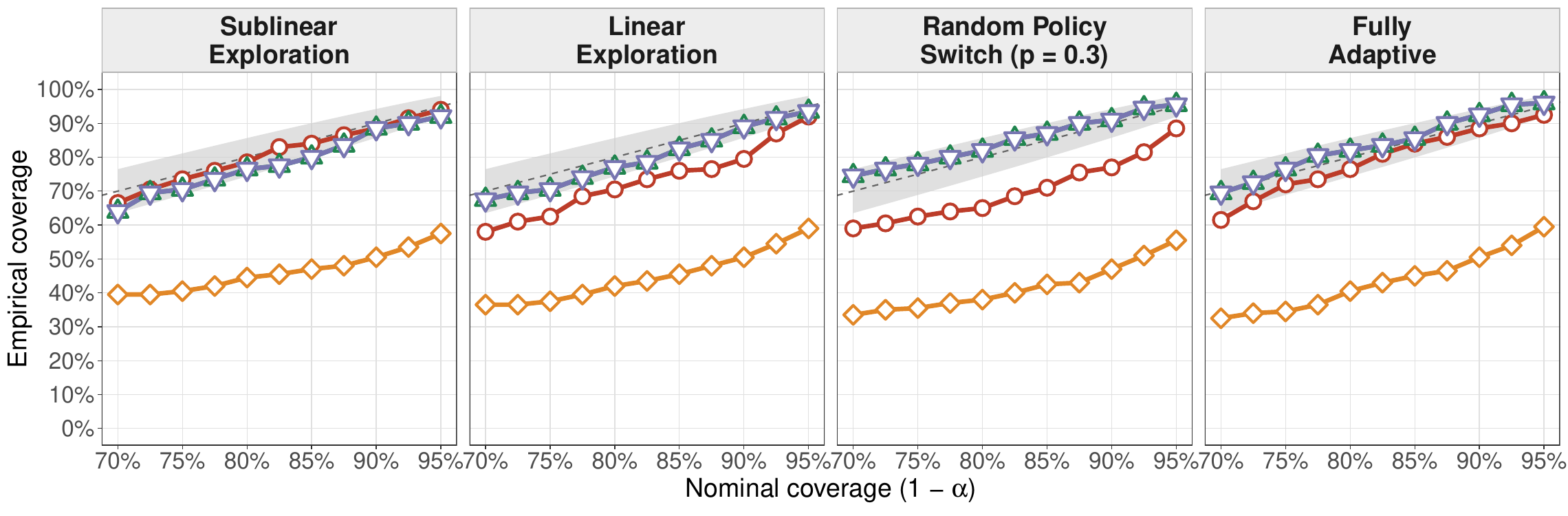}
    \includegraphics[width=0.99\textwidth]{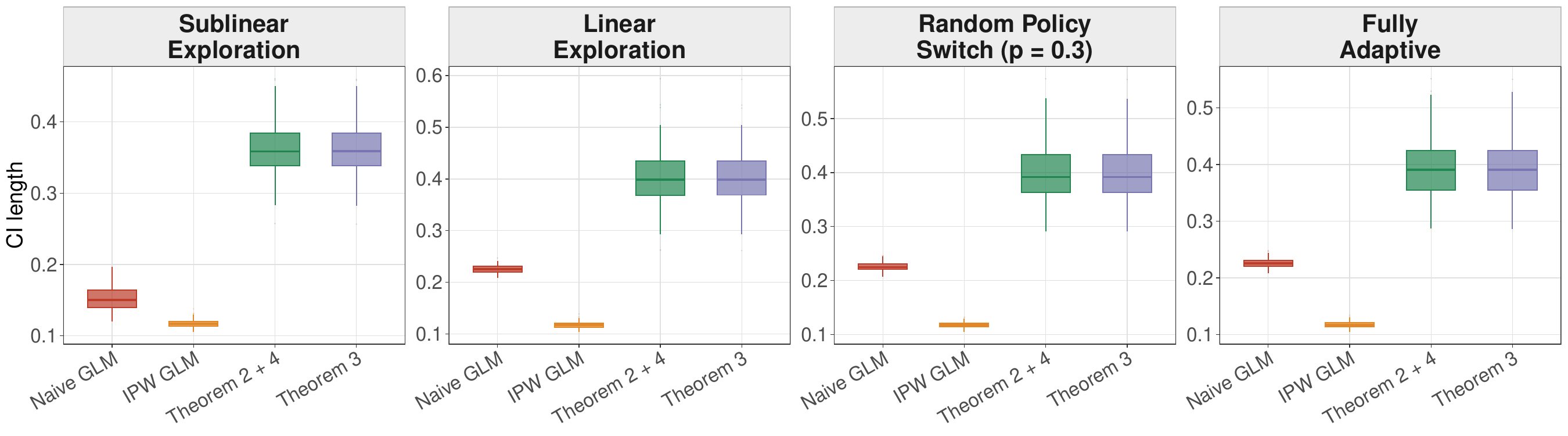}
    \caption{Empirical coverage of $\beta$ (top) and average confidence interval length (bottom; assuming nominal target of $0.95$) for dynamic pricing application. Experiment ran with $T=5000$ and averaged over $200$ repetitions. All baseline methods result in significant under coverage from the nominal target due to adaptive sampling. Both methods (\cref{thm:clt} and \cref{thm:self-normalized}) have empirical coverage matching the target and similar CI length.}
    \label{fig:coverage_pricing}
\end{figure}

As this depends on several unknown quantities, we assume at each time \(t\), the decision-maker forms estimates \((\hat \eta_{t-1}(\cdot),\hat\beta_{t-1})\) targeting \((g^\star + \tau g_0(\cdot), \beta)\). We compare our methods to two baselines: a generalized linear model as well as a generalized linear model with inverse propensity weighting. In both cases, standard sandwich estimates of the variance were used. We see in \cref{fig:coverage_pricing} that these baselines do not have correct nominal coverage while methods constructed from \cref{thm:clt} and \cref{thm:self-normalized} have proper coverage in all asymptotic regimes. More extensive simulation results and a detailed description of the methodology are available in Appendix~\ref{appendix:pricing_details}

%% file: conclusion.tex
We have developed a unified framework for inference on smooth functionals of nonparametric M-estimands under adaptive data collection. Our two asymptotic normality results---one based on a novel reweighted plug-in variance estimator and one based on self-normalized influence function increments---provide valid confidence intervals across a range of adaptive sampling regimes. Our results permit flexible estimation of nuisance parameters and are valid under model misspecification. 

Several directions remain open.  Generalizing beyond inverse propensity reweighting to handle arbitrary time-varying loss reweighting would allow us to unify a larger subset of work in the adaptive inference literature. Moreover, although we have shown that our constructions are asymptotically normal, it is an open question whether they achieve semiparametric efficiency bounds. Finally, our results assume that sampling policies are known, a natural question is under what conditions the sampling policies can be estimated while preserving asymptotic normality. 

%% file: appendix.tex
\section{Notation} \label{appendix:notation}
 We define $[n] := \{1,...,n\}$ for a positive integer $n$. We denote $\lfloor x \rfloor$ for some $x\in \R$ to refer to the largest integer less than or equal to  $x$. For a probability measure $P$, we write $Pf := \int f(z)\, dP(z)$.

%For a function $f_{\theta} : \R^{d} \rightarrow \R$, we refer to $\dot{f}_{\theta}$ as the gradient of $f$ with respect to $\theta$, $\ddot{f}_{\theta}$ as the Hessian matrix with respect to $\theta$, $\dddot{f}_{\theta}$ as the third derivative with respect to $\theta$, and so on. We denote $e_{j}$ as the $j$-th standard basis vector in $\R^{d}$. For two matrices $A,B$, $A \succeq B$ means that $A-B$ is positive semidefinite. For a sequence of observations $\{Z_{t}\}_{t=1}^{T}$, we define the empirical measure $P_T := \frac{1}{T}\sum_{t=1}^T \delta_{Z_t}$. 

When $\Theta$ and $\Upsilon$ are normed linear spaces and 
$f_{\theta,\eta} : \mathcal{Z} \to \mathbb{R}$ is a function indexed by $(\theta,\eta) \in \Theta \times \Upsilon$, 
we denote the directional derivative with respect to the $\theta$-argument by
$$\partial_\theta f_{\theta,\eta}(z)[h]
:=
\left.\frac{d}{d\epsilon} f_{\theta+\epsilon h,\eta}(z)\right|_{\epsilon=0},$$
for $h \in \Theta$. Similarly, we define the second-order derivative as $$\partial_\theta^2 f_{\theta,\eta}(z)[h,u]
:=
\left.\frac{\partial^2}{\partial \epsilon \partial \delta}
f_{\theta+\epsilon h+\delta u,\eta}(z)\right|_{\epsilon=\delta=0}.$$
Mixed derivatives are defined in the natural way, e.g.
$$\partial_\eta \partial_\theta f_{\theta,\eta}(z)[h,u]
:=
\left.\frac{\partial^2}{\partial \delta \partial \epsilon}
f_{\theta+\epsilon h,\eta+\delta u}(z)\right|_{\epsilon=\delta=0}.$$
When needed, we assume that $f_{\theta,\eta}$ is Fréchet differentiable in $\theta$.
\section{Common Target Estimands and Relationship to Prior Work} \label{appendix:prior}
In this section, we note how this framework can be used to target estimands considered in a variety of precursor work in the adaptive and contextual bandit settings. \cref{tab:positioning_review} summarizes some of the key differences in the assumptions of this work and precursor methods.

\subsection{Average Treatment Effect} \label{subsec:ATE}

Let $A \in \{0,1\}$ be a binary treatment. Let $\Theta := L_{2}(P_e)$ and the nuisance space be trivial ($\Upsilon = \{0\}$) as there is no nuisance parameter. Assume a square loss function with $\ell_{\theta}(c,a,y) =(y - \theta(a,c))^{2}$. Then the $M$-estimand is $\theta_{P_e}(a,c) := \E[Y|A=a,C=c]$. Letting $m_{\theta}(c,a,y) : = \theta(1,c) - \theta(0,c)$, the target becomes
$$ \Psi_{e}( \theta_{P_e}) = \E_{P_e}\left[m_{\theta_{P_e}}(C,A,Y)\right] = \E_{P_C}\left[ \theta_{P_e}(1,C) - \theta_{P_e}(0,C)\right].$$
This generalizes straightforwardly to any smooth functional of the conditional regression function, subsuming the frameworks of \cite{bibault2021postcontext} and \cite{2019hadad}.

\subsection{Parametric $M$-estimators}\label{subsec:M-estimator}
Let $\Theta = \R^{d}$ and the nuisance space be trivial (e.g. $\Upsilon = \{0\}$) as there is no nuisance parameter. Choose any loss $\ell_{\theta}(z)$ and let $\theta_{P_e} = \argmin \E_{P_e}\left[\ell_{\theta}(C,A,Y)\right]$. For any fixed $v \in \R^{d}$ (such as a basis vector $e_j$ to target a coordinate of $\theta$), we can then let $m_{\theta} = v^{T}\theta$ and consider $\Psi_{e}(\theta_{P_e}) = v^{T} \theta_{P_e}$. This subsumes the frameworks of \cite{leiner2026adaptiveoffpolicyinferencemestimators} and \cite{guo2025statisticalinferencemisspecifiedcontextual}. 

We note that the approach of \cite{zhang2021mestimators} cannot be put into this form as their construction uses square root inverse propensity weights. Under correct specification, the target estimand would be unchanged, but under misspecification, their procedure covers the time-varying random target 
$$\argmin_{\theta \in \Theta} \sum_{t=1}^{T}\E\left[\sqrt{\frac{\pi_{e}(A_t \mid C_t)}{\pi_{t}(A_t\mid C_t)}} \ell_{\theta}(Z_t) \mid C_t ,\Filt\right].$$
Generalizing our method to cover arbitrary time-varying weightings of the loss function would be an interesting extension for future work. 
\subsection{Partial Linear and Generalized Linear Regression} \label{subsec:partial_glm}
We partition the covariates $X = (W,V)$, where $W\in \R^{d}$ enters the model linearly and $V$ enters the model non parametrically. Consider the model
$$Y = g\left( W^{T} \theta  + \eta_{P_e}(V)\right) + \epsilon,$$
where $g: \R \to \R$ is a known inverse link function and $\E[\epsilon \mid C,A] = 0$. Then $\Theta = \R^{d}$ and $\Upsilon = L_{2}(P_C)$ and $\ell_{\theta,\eta}(c,a,y) = \left( y - g(w^{T}\theta + \eta(v))\right)^{2}$ where $(w,v)$ are components of (c,a). We then have $\theta_{P_e} = \argmin_{\theta \in \R^{d}} \E_{P_e} \left[\ell_{\theta,\eta_{P_e}}(C,A,Y)\right]$. For any fixed $v \in \R^{d}$, we can then let $m_{\theta} = v^{T}\theta$ and consider $\Psi_{e}(\theta_{P_{e}}) = v^{T} \theta_{P_e}$. This corresponds to the setting of \cite{10.1214/24-AOS2485}. 

\subsection{Value of an Optimal Policy over a Policy Class}  \label{subsec:optimal_policy}
We first start with a naive solution. Let $\Theta := L_{2}(P_e)$ and the nuisance space be trivial ($\Upsilon = \{0\}$) so there is no nuisance parameter.  Assume a square loss function with $\ell_{\theta}(c,a,y) =(y - \theta(a,c))^{2}$. As before, the $M$-estimand is $\theta_{P_e}(a,c) := \E[Y|A=a,C=c]$. Letting $\Psi_e(\theta_{P_e}) := \E_{C} \left[\max_{a\in \mathcal{A}} \theta_{P_e}(a,C) \right]$ correspond to the expected value of the optimal policy. Note that this functional is differentiable under a condition that ensures the optimal action given each context $C$ is unique almost surely. 

However, this construction can be restrictive in that it requires the optimal action to be unique almost surely. This condition can be relaxed by optimizing over a policy class. Let $\Pi$ denote a class of policies, define $\theta(a,c) = \E[Y \mid A=a, C=c]$, and consider the functional
\[
\Psi_e(\theta)
=
\sup_{\pi \in \Pi}
\E_{P_C} \left[
\int_{\mathcal{A}} \pi(a \mid C)\,\theta(a,C)\,\mu(da)
\right].
\]
This formulation avoids requiring a unique optimal action almost surely. Instead, it suffices that the population optimization problem over $\Pi$ admits a unique maximizer $\pi^\star$.

Under standard regularity conditions, the envelope theorem implies that for any perturbation $h \in \Theta$,
\[
\partial_\theta \Psi_e(\theta)[h]
=
\E_{P_C} \left[
\int_{\mathcal{A}} \pi^\star(a \mid C)\, h(a,C)\,\mu(da)
\right].
\]
The Riesz representer $\alpha_{P_e}$ can therefore be defined by the relation
\[
\E_{P_C} \left[
\int_{\mathcal{A}} \pi^\star_{\theta}(a \mid C)\, h(a,C)\,\mu(da)
\right]
=
E_{P_e} \big[ \partial_\theta^2 \ell_{\theta_{P_e}}(Z)[\alpha_{P_e}, h] \big]
\quad \text{for all } h \in \Theta.
\]

Under the square loss $\ell_\theta(c,a,y) = (y - \theta(a,c))^2$, we have
\[
E_{P_e} \big[ \partial_\theta^2 \ell_{\theta_{P_e}}(Z)[\alpha, h] \big]
=
2\, E_{P_e}[\alpha(A,C)\, h(A,C)].
\]
Matching terms yields the Riesz representer
$
\alpha_{P_e}(a,c)
=
\frac{1}{2}\,\frac{\pi^\star(a \mid c)}{\pi_e(a \mid c)}$. The corresponding efficient influence function is therefore
\[
\varphi_{P_e}(Z)
=
\int_{\mathcal{A}} \pi^\star(a \mid C)\, \theta_{P_e}(a,C)\,\mu(da)
-
\Psi_e(\theta_{P_e})
+
\frac{\pi^\star(A \mid C)}{\pi_e(A \mid C)}
\big( Y - \theta_{P_e}(A,C) \big).
\]
We note that, in principle, this construction can be generalized to the case where the optimal policy is non-unique using similar arguments as in \cite{10.1214/15-AOS1384}.

% Requires: booktabs, tabularx, makecell
\begin{table}[t]
\centering
\small
\setlength{\tabcolsep}{2.8pt}
\renewcommand{\arraystretch}{1.08}
\begin{tabularx}{\linewidth}{
  @{}>{\raggedright\arraybackslash}p{.22\linewidth}
  >{\centering\arraybackslash}p{.135\linewidth}
  >{\centering\arraybackslash}p{.082\linewidth}
  >{\centering\arraybackslash}p{.07\linewidth}
  >{\centering\arraybackslash}p{.085\linewidth}
  >{\centering\arraybackslash}p{.17\linewidth}
  >{\raggedright\arraybackslash}X@{}
}
\toprule
Work & Target & Misspec. & Adaptive & Policy & Overlap & Other \\
     & functional & allowed & data & known & assumption & assumptions \\
\midrule

\citet{1982laiwei}
& linear coef.
& no & yes & no
& implicit
& design stability \\[2pt]

\citet{10.1214/15-AOS1384}
& optimal value
& n/a & no & no
& uniform
& variance estimable \\[2pt]

\citet{pmlr-v80-deshpande18a,khamaru2021near,NEURIPS2023_a399456a}
& linear coef.
& no & yes & no
& implicit
& online debiasing \\[2pt]

\citet{2019hadad}
& arm means / policy value
& n/a & yes & yes
& vanishing
& variance converges \\[2pt]

\citet{bibault2021postcontext}
& policy value / contrasts
& n/a & yes & yes
& vanishing
& variance estimable \\[2pt]

\citet{NEURIPS2020_6fd86e0a}
& linear coef.
& no & batched & no
& uniform 
& fixed no. of batches \\

\citet{zhang2021mestimators}
& $M$-est.\ param.
& no & yes & yes
& uniform
& square-root IPW \\[2pt]

\citet{zhang2023statistical}
& $Z$-projection
& yes & yes & yes
& uniform
& longitudinal; fixed horizon \\[2pt]

\citet{10.1214/24-AOS2485}
& partial-linear coef.
& no & yes & moments
& vanishing
& nuisance consistency \\[2pt]

\citet{10.1214/24-AOS2485}
& GLM coef.
& no & yes & moments
& vanishing
& reference-arm floor; nuisance rates \\[2pt]

\citet{guo2025statisticalinferencemisspecifiedcontextual}
& $Z$-projection
& yes & yes & yes
& uniform
& deterministic policy limit \\[2pt]

\citet{leiner2026adaptiveoffpolicyinferencemestimators}
& $M$-projection
& yes & yes & yes
& uniform
& variance estimable \\

\midrule
This paper
& smooth $M$-functionals
& yes & yes & yes
& vanishing
& variance estimable \\
\bottomrule
\end{tabularx}

\caption{Positioning relative to related work. The target-functional column summarizes both the estimand and the generality of the working model. The overlap column distinguishes uniform positivity from
permitted overlap that is permitted to vanish at some rate.
The final column highlights some key assumptions for each method.}
\label{tab:positioning_review}
\end{table}

\section{Additional Simulation Details} 
This section contains additional information about the experiments from \cref{sec:experiments} including details about simulation setup, more explicit derivations of influence and loss functions, and a more complete suite of simulation results. 

Note that across all experiments, we investigate the following asymptotic regimes:

\begin{enumerate}
\item \textbf{Sublinear exploration followed by commitment}: The analyst explores for an initial $T_0$ rounds and then commits to a fixed strategy thereafter. We consider both \emph{sublinear} exploration ($T_{0} = o(T)$). In this regime, the sampling distribution stabilizes after the exploration phase, but the limiting variance may still be random and path dependent. For all simulations below, we choose $T_0 = T^{1/2}$. 
\item \textbf{Linear exploration followed by commitment}: Same as above but with \emph{linear} exploration ($T_{0} = \lfloor\alpha T\rfloor$). For all simulations below, we choose $\alpha = 0.5$.
\item \textbf{Piecewise constant policies with stochastic switching} By default, the policy is held fixed and the user lets $\pi_{t} = \pi_{t-1}$. With some probability $p$, a policy update occurs and $\pi_{t}$ is updated. This corresponds to a sequence of regimes with locally stable behavior but globally time-varying variance, interpolating between fully adaptive and frozen policies. We investigate parameters $p=(0.1,0.2,0.3,0.7)$ in simulations below. 
\item \textbf{Fully adaptive policies} The policy is updated at every time step based on past observations. This corresponds to the most aggressive form of adaptivity and can lead to highly nonstationary sampling distributions.
\end{enumerate}

\subsection{Multi-Armed Bandits with No Margin} \label{appendix:bandit_details}
To aid in intuition, we first consider a simple 2-arm bandit setup with no contextual information. We assume $Z_{t} = (A_t,Y_t)$  with $\mathcal{A} := \{0,1\}$. We let $Y_{t} \mid A_{t} =1 \sim N(\mu_1,1)$ and $Y_{t} \mid A_{t} = 0 \sim N(\mu_0,1)$. When there is no margin ($\mu_1 = \mu_0$), the stability condition of \cite{1982laiwei} will not be satisfied and inference based on ordinary least squares will be invalid.

In this experiment, we compare our estimators derived from \cref{thm:clt} and \cref{thm:self-normalized} with OLS as a baseline. We also consider three choices of bandit sampling algorithms: an $\epsilon$-greedy algorithm with $\epsilon = 0.1$, a Thompson sampling approach with prior $N(0,\tau^{2})$ with $\tau = 10$, and a UCB (upper confidence bound) algorithm where the CI length for arm $k$ is driven by  $2\sqrt{\frac{\log T}{\sum_{t=1}^{T} A_{t} = k }}$. 

We repeat this experiment over $t=10,000$ iterations. We track the empirical coverage of CIS and confidence interval length for a nominal target of $0.95$. We let the target estimand be the contrast $\E[Y|A=1] - \E[Y \mid A=0]$. Our evaluation policy in this setting is uniform over the actions space (i.e. $P_{e}(A_{t} = 1) = P_e(A_{t} = 0) = 0.5$).

\cref{fig:eps_greedy_bandit} shows coverage and CI length for the $\epsilon$-greedy procedure. The same set of metrics are shown in \cref{fig:Thompson} for Thompson sampling and \cref{fig:UCB} for the UCB algorithm. We note, as expected, CIs constructed from \cref{thm:clt} have proper coverage in all cases. \cref{thm:self-normalized} have proper coverage in all settings but becomes increasingly conservative in the fully adaptive setting --- we note this as an empirical regularity as we have no theoretical result that guarantees such behavior. In contrast, OLS methods undercover and become increasingly anticonservative as the level of adaptivity increases.

\begin{figure} 
    \centering
    \includegraphics[width=\linewidth]{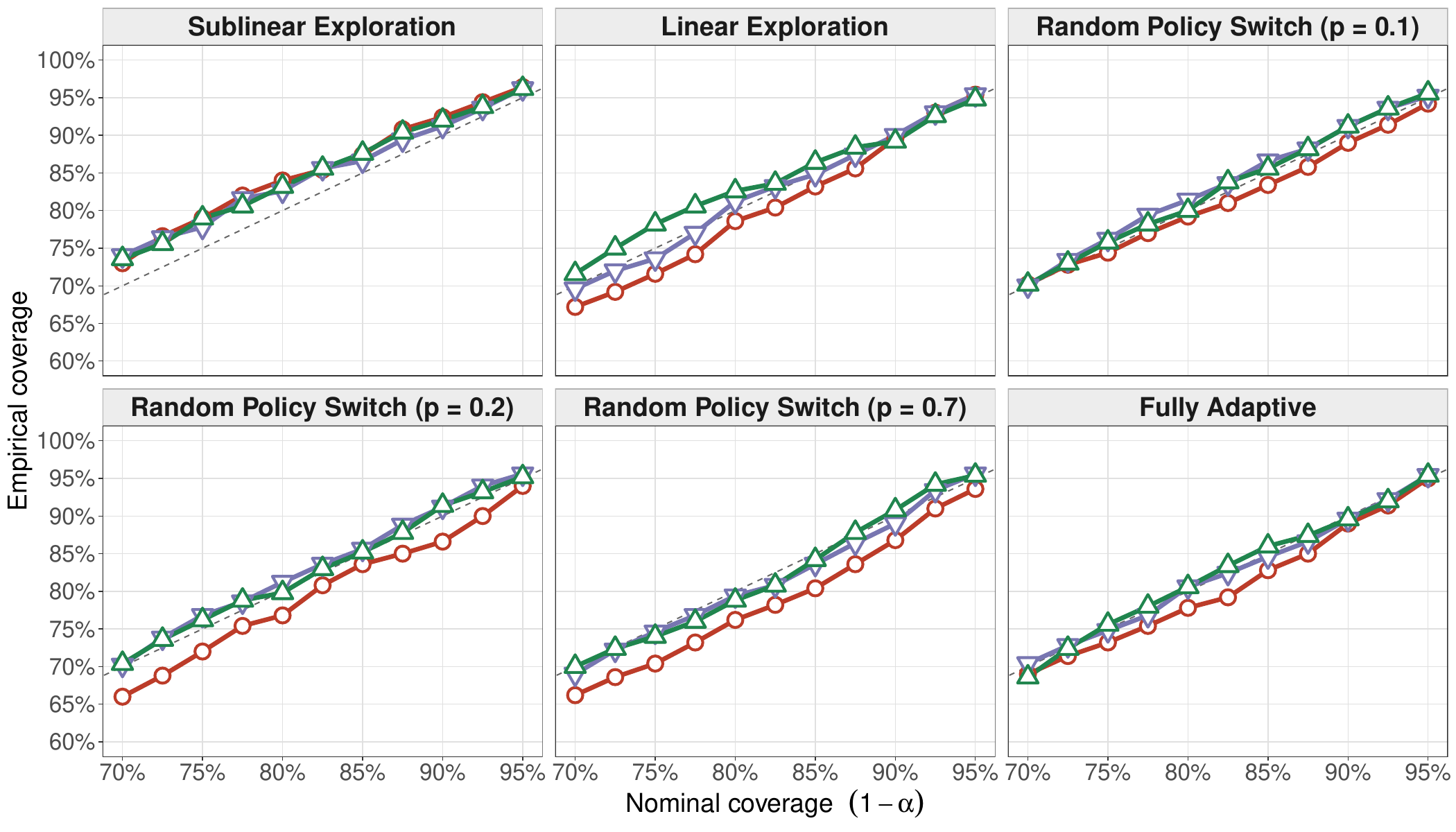}
    \includegraphics[width=0.5\linewidth]{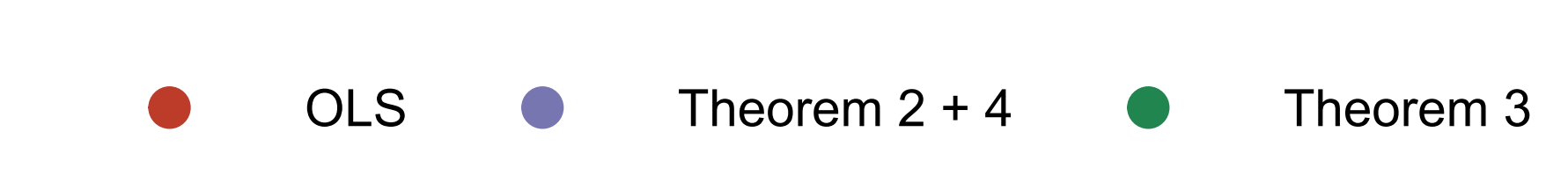}
    \includegraphics[width=\linewidth]{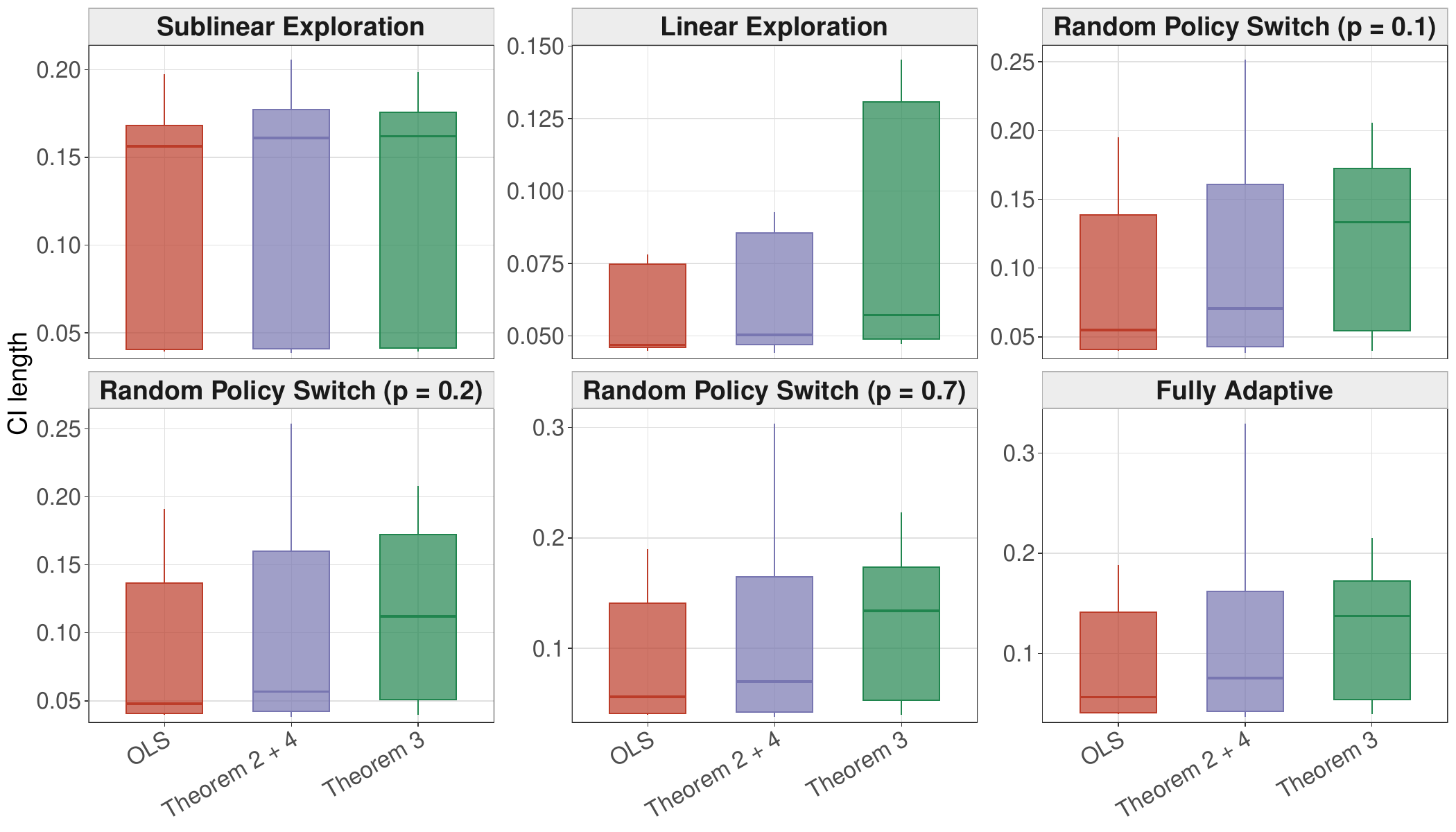}
    \caption{Empirical vs. nominal coverage (top) and confidence interval length (bottom) for CIs constructed from an $\epsilon$-greedy bandit algorithm. CIs targeting the margin $\E[Y|A=1] - \E[Y \mid A=0]$ with target nominal coverage of $1-\alpha = 0.95$. We note that ordinary least squares only slightly undercover in this setting. } \label{fig:eps_greedy_bandit}
\end{figure}

\begin{figure}
    \centering
    \includegraphics[width=\linewidth]{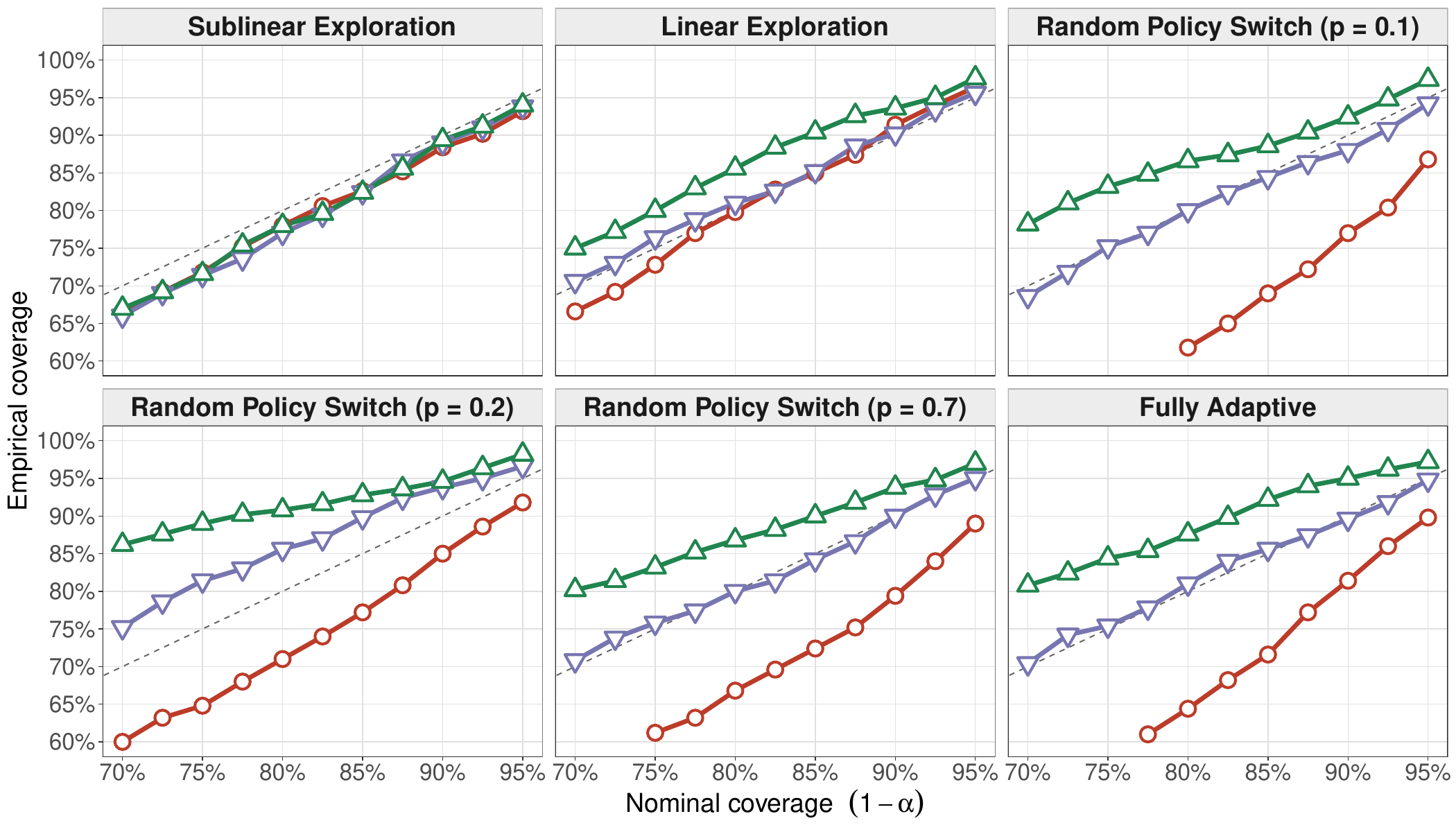}
    \includegraphics[width=0.5\linewidth]{Figures/legend.png}
    \includegraphics[width=\linewidth]{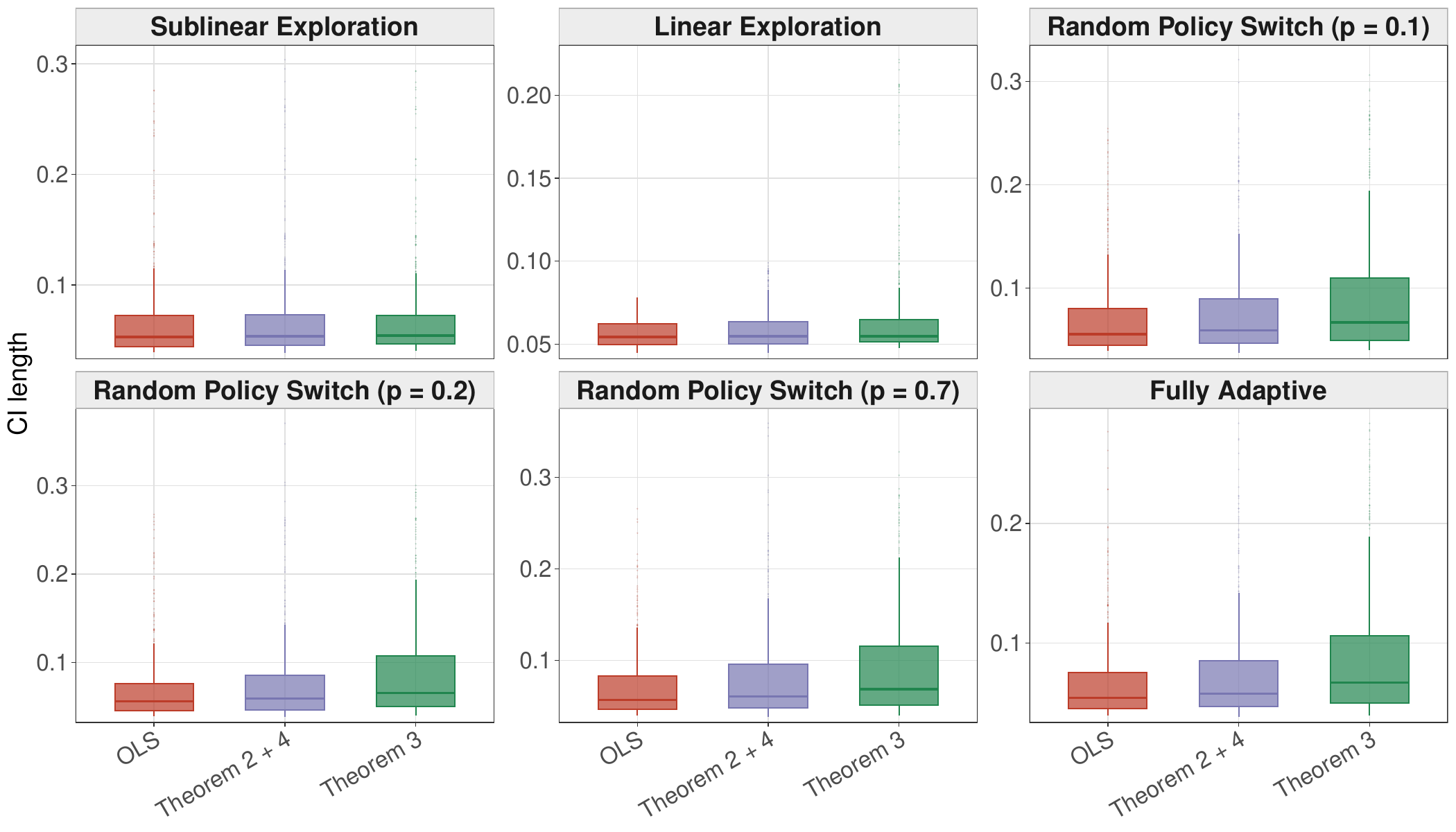}
    \caption{Empirical vs. nominal coverage (top) and confidence interval length (bottom)for CIs constructed from an $\epsilon$-Thompson sampling with target nominal coverage of $1-\alpha = 0.95$. CIs targeting the margin $\E[Y|A=1] - \E[Y \mid A=0]$. We note that ordinary least squares undercover massively in this setting.}
    \label{fig:Thompson}
\end{figure}

\begin{figure}
    \centering
    \includegraphics[width=\linewidth]{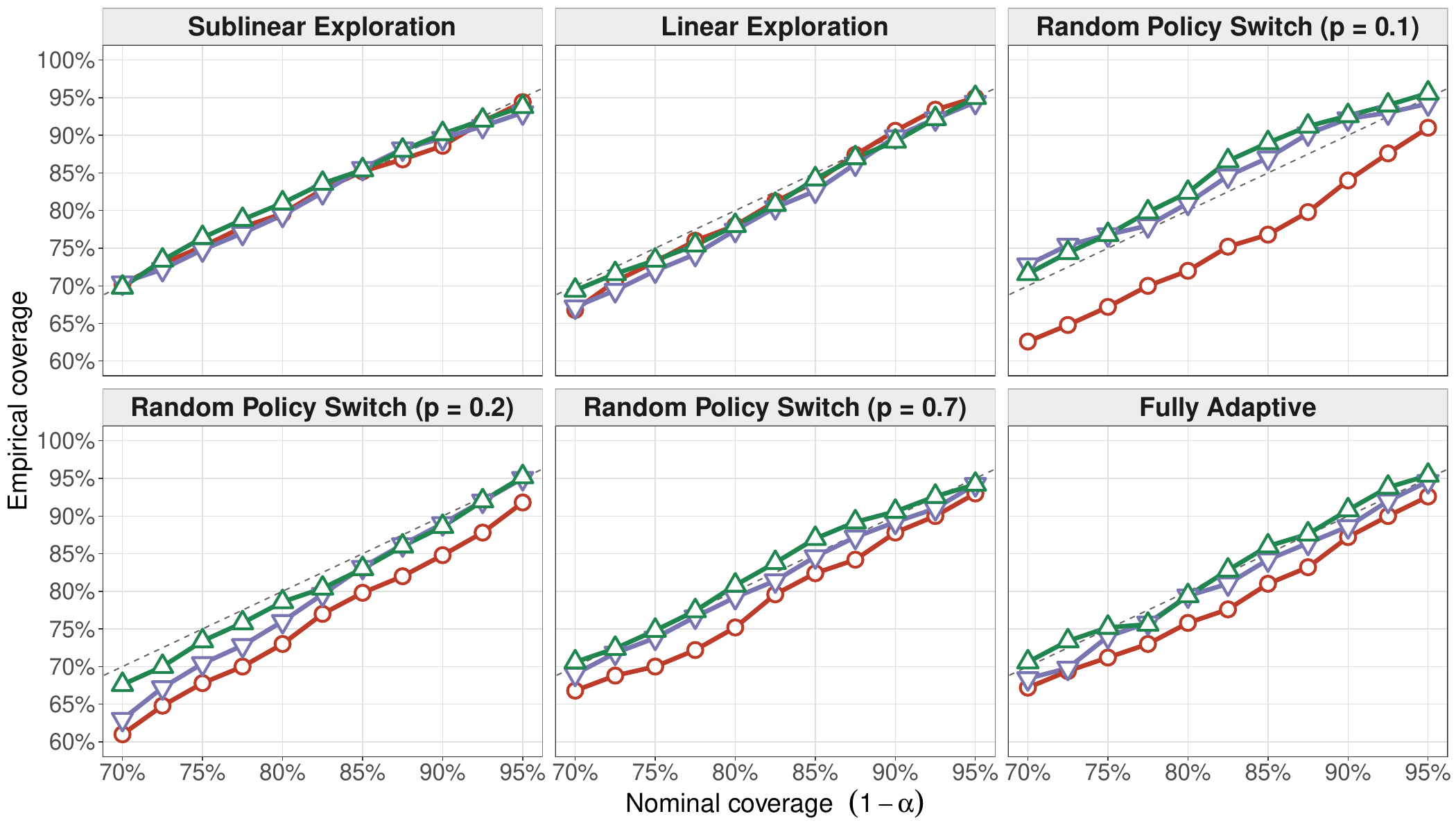}
    \includegraphics[width=0.5\linewidth]{Figures/legend.png}
    \includegraphics[width=\linewidth]{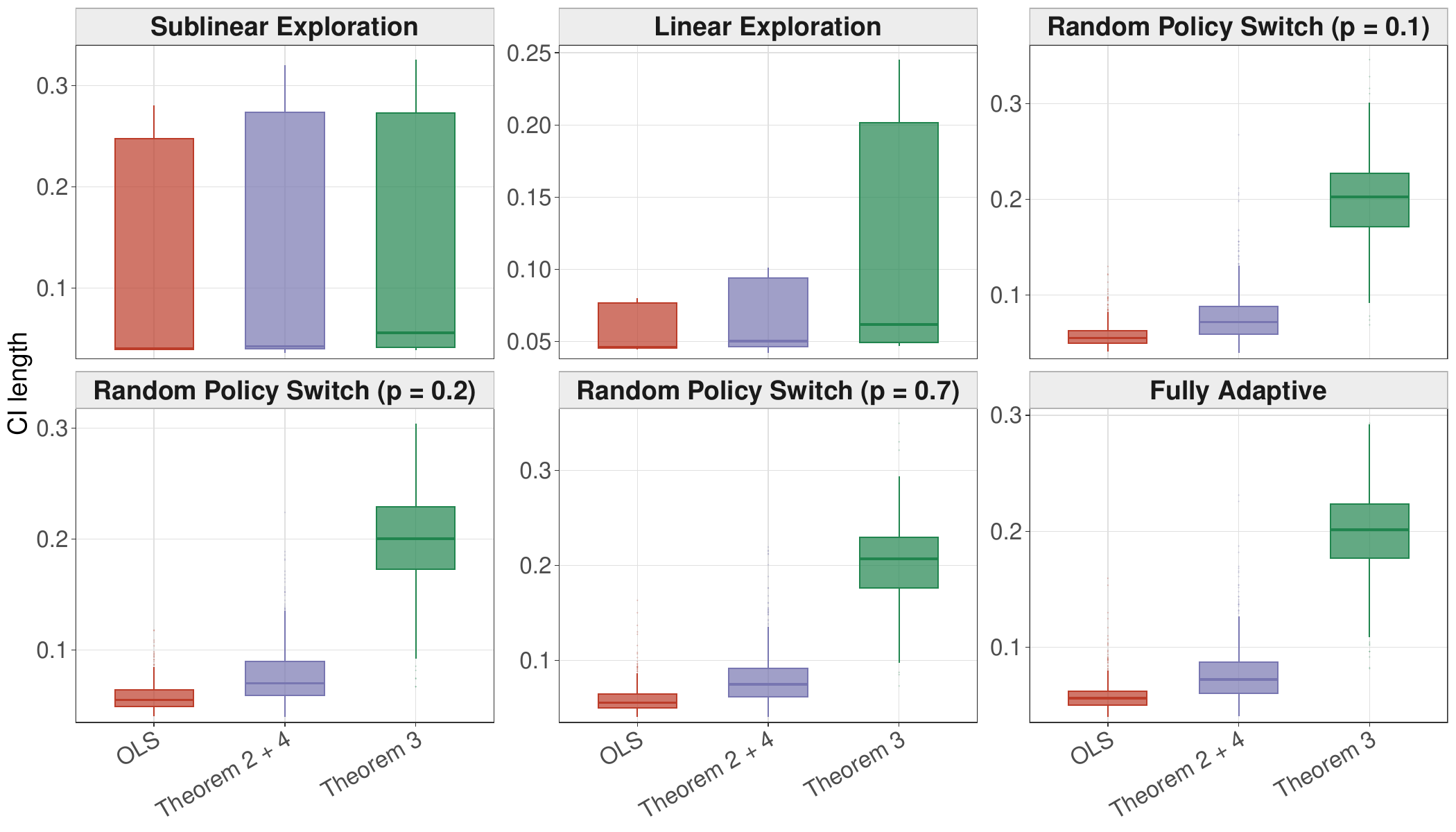}
    \caption{Empirical vs. nominal coverage (top) and confidence interval length (bottom) for CIs constructed from UCB algorithm. CIs targeting the margin $\E[Y|A=1] - \E[Y \mid A=0]$ with target nominal coverage of $1-\alpha = 0.95$. We note that confidence intervals constructed from \cref{thm:self-normalized} are much wider than other methods. }
    \label{fig:UCB}
\end{figure}
\subsection{Additional Details on \cref{subsec:dynamic_pricing}} \label{appendix:pricing_details}
In this section, we expand on methodological and implementation details for the dynamic pricing application. 

\paragraph{Constructing weak margins} As in the no-margin bandit example, this setting will present the most difficulties when it is hard to distinguish the optimal price from nearby prices. This corresponds to a \emph{weak-margin regime} in which the revenue surface near $p^{\star}(X)$ is locally flat and small perturbations in price lead to negligible changes in expected revenue.

To construct such utility functions, we first fix a target price \(p_0\) and choose \(g^\star\) so that \(p_0\) satisfies the first-order condition for optimality when \(g_0(X) = 0\). Writing $q_0 = \Lambda(g^\star - \beta p_0)$,the first-order condition implies that $\beta p_0 (1 - q_0) = 1$. Solving yields
\[
q_0 = 1 - \frac{1}{\beta p_0},
\qquad
g^\star = \beta p_0 + \log\!\left(\frac{q_0}{1 - q_0}\right).
\]

When \(\beta p_0\) is close to one, the renewal probability \(q_0\) is small and the curvature of the revenue function at \(p_0\) is close to zero. To introduce heterogeneity while preserving a large mass of near-indifferent individuals, we let
\[
g_0(C) = C_1 C_2+\sin(C_2) C_3+
0.5 C_4^2
-
0.25 C_5 C_6,
\]
and we generate 
\[
C_{tj} = B_{tj} Z_{tj}, \qquad B_{tj} \sim \mathrm{Bernoulli}(\rho), \quad Z_{tj} \sim \mathcal{N}(0,1),
\]
When $\rho$ is small, this construction induces a large fraction of contexts with \(g_0(X) \approx 0\), placing many individuals near the weak-margin baseline determined by \(g^\star\). The parameter \(\tau \ge 0\) controls the strength of this heterogeneity. 
\paragraph{No margin in discrete action space } A more direct way to enforce a no margin setting is to assume a discrete action space \(\mathcal{P} = \{p_1, p_2, \dots\}\). In this setting, we can construct exact no-margin regimes by choosing \(g^\star\) such that two distinct prices satisfy
\[
\E_{P_C}\left[ p_1 \Lambda\left(g^\star + \tau g_0(C) - \beta p_1 \right)\right].
= 
\E_{P_C}\left[ p_2 \Lambda\left(g^\star + \tau g_0(C) - \beta p_2 \right)\right].
\]
If $p_1$ and $p_2$ both achieve the maximum in the restricted price set, this will lead to an unstable maximum policy which may make inference difficult.  

\paragraph{Constructing Neyman orthogonal score function for dynamic pricing model (\cref{subsec:dynamic_pricing}})
In our simulations, we treat \(\eta(C):=g^\star+\tau g_0(C)\) as an unknown nuisance function. As such, we are required to construct a loss function that is Neyman-orthogonal in order to conduct inference. Letting $\mu_{\beta,\eta} := \Lambda\left(\eta(C) - \beta p \right)$, the usual score function is:
\[
p\{Y-\mu_{\beta,\eta}(C,p)\}.
\]
To obtain an orthogonal score, we residualize the price against functions of  \(C\) 
\[
v_{\beta,\eta}(C,p)
=
\mu_{\beta,\eta}(C,p)\{1-\mu_{\beta,\eta}(C,p)\}.
\]
Define
\[
r_{\beta,\eta}(C)
=
\frac{
\mathbb{E}_{P_e}\!\left[p\,v_{\beta,\eta}(C,p)\mid C\right]
}{
\mathbb{E}_{P_e}\!\left[v_{\beta,\eta}(C,p)\mid C\right]
}.
\]
Then the orthogonal score for \(\beta\) is
\[
\psi_\beta(Z)
=
\{p-r_{\beta,\eta}(X)\}
\{Y-\mu_{\beta,\eta}(X,p)\}.
\]
Indeed, for any perturbation \(h(X)\) of the nuisance function,
\[
\mathbb{E}_{P_e}\!\left[
\{p-r_{\beta,\eta}(X)\}
v_{\beta,\eta}(X,p)
h(X)
\right]
=
0,
\]
The corresponding efficient influence function is
\[
\varphi_\beta(Z)
=
I_\beta^{-1}
\{p-r_{\beta,\eta}(C)\}
\{Y-\mu_{\beta,\eta}(C,p)\},
\]
where
\[
I_\beta
=
\mathbb{E}_{P_e}\!\left[
\{p-r_{\beta,\eta}(C)\}^2
v_{\beta,\eta}(C,p)
\right].
\]
In the experiments, we replace \((\mu_{\beta,\eta}, r_{\beta,\eta}, I_\beta)\) by plug-in estimates
\((\hat\mu,\hat r,\hat I)\).

\paragraph{Choice of sampling policies} Recall that the revenue-maximizing choice of price would be 
\[
p^\star
=\arg\max_{p \in \mathcal{A}}\E_{P_C}\left[
p \cdot \Lambda\big(g^\star + \tau g_0(C) - \beta p\big)\right].
\]
In practice, the decision maker will construct estimates to maximize this quantity over the empirical sample, we denote the empirical estimate of the optimal price as time $t$ as $\hat{p}_{t}$. For the decision maker, to ensure some amount of exploration, we let pricing be determined by an $\epsilon$-greedy approach where an $\epsilon$ greedy fraction is budgeted towards exploration at all steps. That is, 
$$ p_t =
    \begin{cases}
    \text{Uniform draw from } \mathcal{A}, & \text{with probability } \epsilon_t, \\
    \hat p_t, & \text{with probability } 1 - \epsilon_t,
    \end{cases}$$

\iffalse In all cases, we clip the propensity scores in the interval $[0.02,0.98]$ uniformly in time to ensure regularity conditions are met. We also consider two target estimands for inference: the price sensitivity $\beta$ and the expected revenue at the revenue-maximizing price,
\[
\max_{p \in \mathcal{A}}\E_{P_C}\left[ p \Lambda\left(g^\star + \tau g_0(C) - \beta p \right)\right].
\]
\fi
\paragraph{Empirical Results} We show empirical results using the $\epsilon$-greedy approach described above. In our simulations, we fix $\beta = 1$, $\tau = 0.5$ and let the trajectory run for $T=2000$ iterations. We consider two simulations encoding different types of optimal policy sets
\begin{enumerate}
\item A \textbf{no margin} regime. Here, we calibrate $g^{\star}$ as a free parameter so that $p_1   = 1$ and $p_2 = 2$ tie in expected revenue. 
\item A \textbf{strong margin regime}. Here, we calibrate $g^{\star}$ so that $p_{0} = 2.5$ is optimal when $g_0(X) = 0$. Because the optimal policy is unique, we expect variances to be more stable and baseline methods to outperform. 
\end{enumerate}

In both simulations, we discretize the prices into 10 even increments: $p \in \{0.5,1.0,1.5,\cdots, 5\}$ and assume a uniform evaluation policy over this price set. For our decision algorithm, we use the $\epsilon$-greedy approach described above with $\epsilon = 0.1$. 

At each round, the nuisances $(\hat\eta_{t-1}, \hat\beta_{t-1})$ were fit with a penalized logistic regression using a ridge penalty of $\lambda = 0.1$. For estimating the nuisances, we constructed a feature map consisting of the low order polynomial and pairwise interaction terms of $C_{j} : j \in [1,...,6]$. 

The results are shown in \cref{fig:pricing_epsilon}. As expected, baseline methods fail in the regime with no unique optimal policy, but interestingly baseline methods also fail to cover the target estimand when there is strong margin (which one might expect to cause quick convergence to the optimal policy). We also note that even though we only have theoretical guarantees for \cref{thm:self-normalized} in the sublinear case, it appears to achieve very close to nominal coverage even when the sampling algorithm is fully adaptive. Investigating the underlying reasons for this apparent robustness would be an interesting avenue for future research. \cref{thm:clt} correctly covers in all regimes as expected.

\begin{figure}
    \centering
    \includegraphics[width=\linewidth]{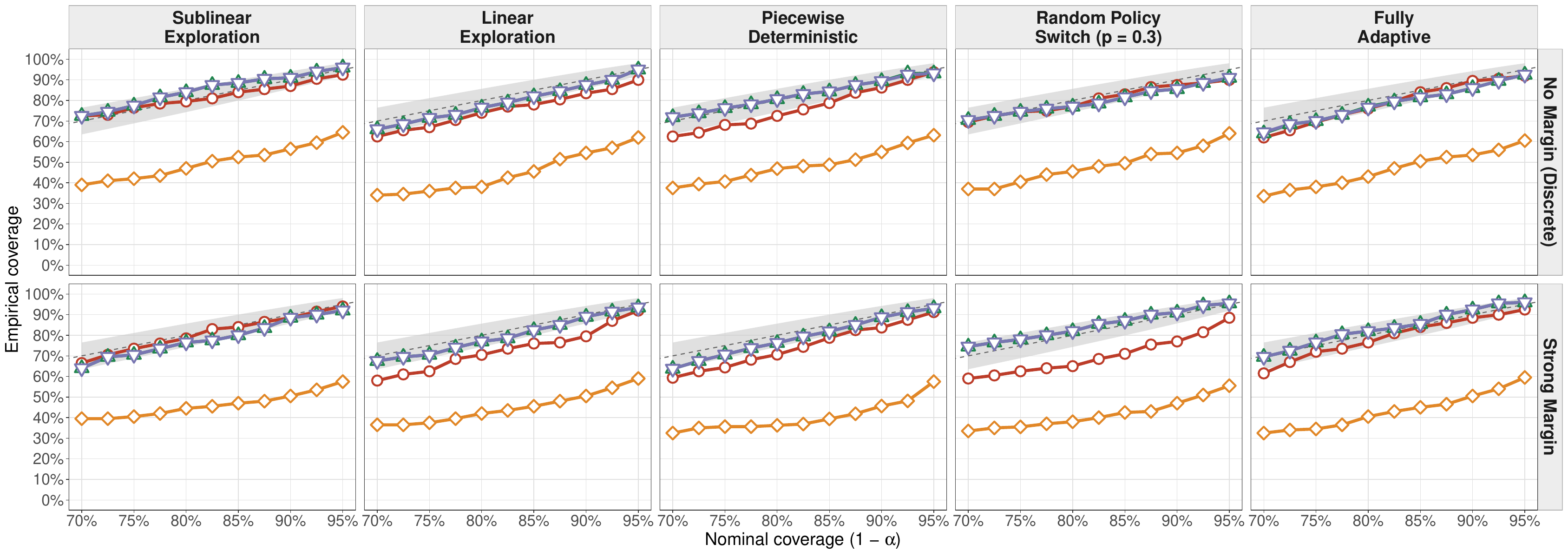}
    \includegraphics[width=0.5\linewidth]{Figures/legend_pricing.png}
    \includegraphics[width=\linewidth]{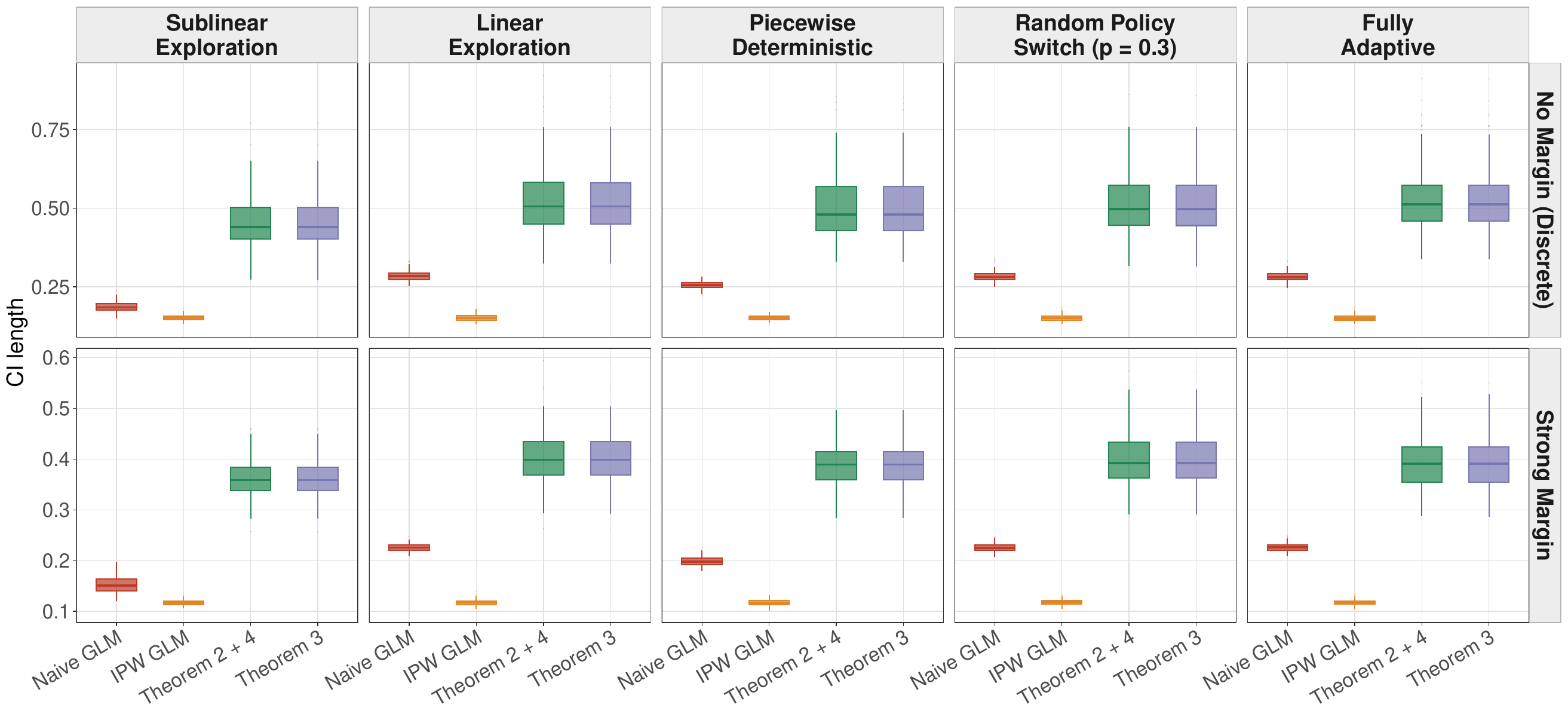}
    \caption{Empirical vs. nominal coverage (top) and confidence interval length for nominal coverage $1-\alpha =0.95$ (bottom) for CIs covering $\beta$ in dynamic pricing experiment with $T=5000$. \cref{thm:clt} and \cref{thm:self-normalized} perform almost identically, but baseline methods undercover significantly.}
    \label{fig:pricing_epsilon}
\end{figure}

\subsection{Computational Environment} \label{subsec:computing}
For replication purposes, we note the computing environments used below. 

\textbf{Bandit simulation}. All experiments were run on a cluster using single-core CPU workers (1 core, 2 GB RAM per job). The simulation grid comprises 10,500 cells (500 replications × 3 policies × 7 regimes), each running a trajectory of length $T=10,000$. Jobs were parallelized across 1,000 tasks, with each task processing approximately 10–11 cells. Individual tasks completed in under 15 minutes, giving a total wall-clock time of approximately 15 minutes (sequential runtime roughly 2,500 CPU-hours).

\textbf{Pricing simulation}. Same cluster and parallelization scheme as above (1 core, 2 GB RAM per job). The grid comprises 4,000 cells (200 replications × 2 policies × 5 regimes × 2 margin settings), each simulating a single trajectory of length $T=10,000$. Jobs were split across 1,000 tasks, with individual tasks completing in under 30 minutes. Total wall-clock time was approximately 30 minutes (sequential runtime roughly 2,000 CPU-hours).

\section{Practical Considerations for Estimating Conditional Variances} \label{appendix:cond_var_detail}
We note that \cref{thm:var_estimate} requires the plug-in centering estimator \(\hat\Psi_t\) to satisfy mild boundedness and consistency conditions. A convenient way to guarantee these properties is through sample splitting. \cref{prop:plugin_samplesplit} shows that if the nuisance and parameter estimates are trained on an initial burn-in block and the centering term is estimated on a separate holdout block, then the required rate condition follows automatically under the regularity assumptions of the paper. This is not a particularly onerous requirement, as we already recommend a burn-in period when applying \cref{thm:clt}.

Moreover, if \(T_0\) is chosen to be sufficiently large, the resulting variance estimation problem becomes more stable. So long as the evaluation policy is chosen such that $\Pi_e(\cdot\mid c)\ll \Pi_t(\cdot\mid c)$, then $\hat\sigma_{t}^{2}$ will almost surely be bounded from below by a positive constant for all $t>T_0$. 
\begin{proposition} \label{prop:plugin_samplesplit}
Assume the conditions of  \cref{thm:var_estimate}. Let \(T_0=\lfloor aT\rfloor\) and \(T_1=\lfloor bT\rfloor\), where
\(0<a<b<1\). Define
\[
\hat\Psi
:=\sum_{s=T_0+1}^{T_1} w_s m_{\hat\theta_{T_0}}(Z_s).
\]
Then $
(\hat\Psi-\Psi_e)^2=o_p(T^{-1/2})$.
Consequently, if $\hat\Psi_t=\hat\Psi$ for \(t>T_1\), then
\[
\frac1{T}
\sum_{t=T_1+1}^T
(\hat\Psi_t-\Psi_e)^2
=
o_p(T^{-1/2}).
\]
\end{proposition}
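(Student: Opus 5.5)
We read $\hat\Psi$ as the importance-weighted average $\hat\Psi := (T_1-T_0)^{-1}\sum_{s=T_0+1}^{T_1} w_s m_{\hat\theta_{T_0}}(Z_s)$. Without the normalization the statement cannot hold, and the normalized form matches the motivating display in \cref{subsec:adaptive_weights}. Set $n := T_1 - T_0 \asymp T$ and $\bar\theta := \hat\theta_{T_0}$.

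The plan is to split the error exactly as in \cref{subsec:adaptive_weights}:
\[
\hat\Psi - \Psi_e(\theta_{P_e}) = \underbrace{\frac1n\sum_{s=T_0+1}^{T_1}\bigl(w_s m_{\bar\theta}(Z_s) - \Psi_e(\bar\theta)\bigr)}_{M_T} + \underbrace{\Psi_e(\bar\theta)-\Psi_e(\theta_{P_e})}_{B_T}.
\]
Since $(a+b)^2\le 2a^2+2b^2$, it suffices to show $M_T^2=o_p(T^{-1/2})$ and $B_T^2 = o_p(T^{-1/2})$.

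\textbf{Martingale term $M_T$.} Because $\bar\theta$ is $\mathcal F_{T_0}$-measurable, hence $\mathcal F_{s-1}$-measurable for every $s>T_0$, the change-of-measure identity gives $\E[w_s m_{\bar\theta}(Z_s)\mid\mathcal F_{s-1}] = \E_{P_e}[m_{\bar\theta}(Z)] = \Psi_e(\bar\theta)$. So the summands form a martingale difference sequence. By orthogonality of increments,
\[
\E[M_T^2\mid \mathcal F_{T_0}] \le n^{-2}\sum_s \E\bigl[\E[w_s^2 m_{\bar\theta}^2(Z_s)\mid\mathcal F_{s-1}]\mid\mathcal F_{T_0}\bigr] \le n^{-1}\sup_{t,c,a}\frac{\pi_e(a\mid c)}{\pi_t(a\mid c)}\,\E_{P_e}[m_{\bar\theta}^2(Z)].
\]
Here we used $\E[w_s^2 f\mid\mathcal F_{s-1}] \le (\sup w)\,\E_{P_e}[f]$ for $f\ge 0$.

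Condition~\ref{cond:weight} bounds the sup by $O(T^{1/3})$. The envelope condition (\cref{assumption:envelope}) gives $\E_{P_e}[m_{\bar\theta}^2]=O_p(1)$ once $\bar\theta$ lies in the neighborhood of $\theta_{P_e}$, which happens with probability tending to one. Hence $M_T^2 = O_p(T^{1/3}/T) = O_p(T^{-2/3}) = o_p(T^{-1/2})$. This step uses a conditional Chebyshev/Markov argument on the event where the envelope bound holds.

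\textbf{Plug-in term $B_T$.} By Fréchet differentiability of $m_\theta$ (\cref{assumption:reg_pe}\ref{cond:differentiable}), together with the envelope bound on the derivative and remainder,
\[
B_T = \E_{P_e}\bigl[\partial_\theta m_{\theta_{P_e}}(Z)[\bar\theta-\theta_{P_e}]\bigr] + O(\|\bar\theta-\theta_{P_e}\|_\Theta^2) = O_p(\|\bar\theta-\theta_{P_e}\|_\Theta).
\]
The derivative term is controlled because $h\mapsto\E_{P_e}[\partial_\theta m[h]]$ is a bounded linear functional.

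This is the step I expect to be the main obstacle. It needs $\|\bar\theta-\theta_{P_e}\|_\Theta = o_p(T^{-1/4})$ for a \emph{single} time index. The time-averaged condition~\ref{cond:rates} alone only yields $T^{-1}\|\hat\theta_{T_0}-\theta_{P_e}\|^4\le o_p(T^{-1})$, i.e.\ $o_p(1)$. I would therefore invoke the frozen burn-in implementation recommended after \cref{thm:clt}, under which $\hat\theta_t = \hat\theta_{T_0}$ for a fraction $1-a$ of the rounds. Then~\ref{cond:rates} gives $(1-a)\|\bar\theta-\theta_{P_e}\|^4 = o_p(T^{-1})$. Alternatively, one could simply assume the pilot rate $o_p(T^{-1/4})$ directly. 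Either way $B_T^2 = o_p(T^{-1/2})$.

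\textbf{Consequence.} Combining the two bounds gives $(\hat\Psi-\Psi_e)^2 = o_p(T^{-1/2})$. Setting $\hat\Psi_t=\hat\Psi$ for $t>T_1$, we get
\[
\frac1T\sum_{t=T_1+1}^T(\hat\Psi_t-\Psi_e)^2 = \frac{T-T_1}{T}(\hat\Psi-\Psi_e)^2 \le (\hat\Psi-\Psi_e)^2 = o_p(T^{-1/2}).
\]
The boundedness requirement $\sup_s|\hat\Psi_s|=O_p(1)$ of \cref{thm:var_estimate} follows from the same decomposition, since $\Psi_e(\theta_{P_e})$ is finite.
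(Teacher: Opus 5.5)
Your proof follows the same route as the paper's. You split $\hat\Psi-\Psi_e$ into a martingale term handled by conditional Chebyshev and a bias term handled by the envelope bound $|m_{\bar\theta}-m_{\theta_{P_e}}|\le C\|\bar\theta-\theta_{P_e}\|_\Theta$. For the martingale term the paper uses the $O(T^{1/8})$ overlap bound to get $O_p(T^{-3/4})$, while you use the $O(T^{1/3})$ bound to get $O_p(T^{-2/3})$; either suffices. Your $(T_1-T_0)^{-1}$ normalization is exactly what the paper's variance calculation implicitly uses.

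The obstacle you flag is real, and the paper does not resolve it. Its proof simply asserts $\E_{P_e}[C(Z)^2]\|\hat\theta_{T_0}-\theta_{P_e}\|_\Theta=o_p(T^{-1/2})$, whereas condition~\ref{cond:rates} gives only $o_p(1)$ at a single index. For example, take $\hat\theta_t=\theta_{P_e}$ for $t\neq T_0$ and $\hat\theta_{T_0}=\theta_{P_e}+T^{-1/8}h$. This satisfies \ref{cond:rates}, but for linear $m_\theta$ with $\E_{P_e}[\partial_\theta m_{\theta_{P_e}}(Z)[h]]\neq0$ it gives $(\hat\Psi-\Psi_e)^2\asymp T^{-1/4}$. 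So your frozen-burn-in or pilot-rate hypothesis is a necessary addition to the statement, not a defect of your argument.
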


\begin{proof}

    Decompose
\[
\hat\Psi-\Psi_e
=
\left(\hat\Psi-\E_{P_e}\left[ m_{\hat\theta_{T_0}}\right]\right)
+
\left(\E_{P_e}\left[ m_{\hat\theta_{T_0}}\right]-\E_{P_e}\left[m_{\theta_{P_e}}\right]\right).
\]

Conditionally on \(\mathcal F_{T_0}\), the estimator
\(\hat\theta_{T_0}\) is fixed. Therefore, the first term is a martingale difference sequence. By the overlap assumption,
$
|w_s|
=
\left|
\frac{\pi_e(A_s\mid C_s)}{\pi_s(A_s\mid C_s)}
\right|
\lesssim
T^{1/8}$
uniformly over \(s\leq T\). Hence, $
w_s^2
\lesssim
T^{1/4}$. 

Moreover, by \cref{assumption:envelope}, there exists a measurable envelope
\(C(Z)\) satisfying \(\E_{P_e}\left[ C(Z)^{2}\right]<\infty\) such that $
|m_{\hat\theta_{T_0}}(Z_s)|
\leq
C(Z_s) $. Therefore
\[
w_s^2 m_{\hat\theta_{T_0}}(Z_s)^2
\lesssim
T^{1/4} C(Z_s)^2,
\]
and consequently
\[
\E[w_s^2 m_{\hat\theta_{T_0}}(Z_s)^2\mid\mathcal F_{T_0}]
\lesssim
T^{1/4} \E_{P_e}\left[ C(Z)\right]^{2}
\]
Denoting $X_s := w_s m_{\hat\theta_{T_0}}(Z_s)$ and  $n=T_1-T_0$,
\[
\begin{aligned}
\Var(\hat\Psi\mid\mathcal F_{T_0})
&=
\frac1{n^2}
\sum_{s=T_0+1}^{T_1}
\Var(X_s\mid\mathcal F_{T_0})
\\
&\leq
\frac1{n^2}
\sum_{s=T_0+1}^{T_1}
\E[X_s^2\mid\mathcal F_{T_0}]
\\
&\lesssim
\frac1{n^2}
\sum_{s=T_0+1}^{T_1}
T^{1/4}
=
\frac{nT^{1/4}}{n^2}
= O_p(T^{-3/4})
\end{aligned}
\]

Applying Chebyshev's inequality conditionally on \(\mathcal F_{T_0}\)
therefore gives
\[
\hat\Psi-P_em_{\hat\theta_{T_0}}
=
O_p(T^{-3/8}),
\] 
Therefore, 
\[
\left(\hat\Psi-\E_{P_e}\left[m_{\hat\theta_{T_0}}\right]\right)^{2}
=
O_p(T^{-3/4})
=
o_p(T^{-1/2}).
\]
For the second term, we have that
\[
\left|
m_{\hat\theta_{T_0}}(z)-m_{\theta_{P_e}}(z)
\right|
\leq
C(z)\|\hat\theta_{T_0}-\theta_{P_e}\|_\Theta .
\]
Therefore,
\begin{align*}
\E_{P_e}
\left|
m_{\hat\theta_{T_0}}-m_{\theta_{P_e}}
\right|
\leq
\E_{P_e}\left[C(Z)^{2}\right]\|\hat\theta_{T_0}-\theta_{P_e}\|_\Theta =o_p(T^{-1/2}).
\end{align*}
Combining the two results yields the conclusion. 
\end{proof}
\section{Deferred Proofs} \label{appendix:proofs}
\subsection{Deferred Assumptions} \label{subsec:deferred_assumptions}
Prior to presenting proofs, we briefly state, discuss and justify assumptions that are not discussed in the main body. 
\begin{assumption}[Envelope condition]
\label{assumption:envelope}

There exist constants $\epsilon_\Theta,\epsilon_\Upsilon > 0$
and a measurable function
$C : \mathcal Z \to \mathbb R_+$
such that
$\E_{P_e}[C(Z)^4] < \infty$,
and the following conditions hold whenever
$\|\theta-\theta_{P_e}\|_\Theta \le \epsilon_\Theta$,
$\|\theta'-\theta_{P_e}\|_\Theta \le \epsilon_\Theta$,
$\|\eta-\eta_{P_e}\|_\Upsilon \le \epsilon_\Upsilon$,
and
$\|\eta'-\eta_{P_e}\|_\Upsilon \le \epsilon_\Upsilon$.

\smallskip
\begin{enumerate}
\item
$|m_\theta(z)| \le C(z)$;

\item
$|\partial_\theta m_\theta(z)[h]|
\le
C(z)\|h\|_\Theta$;
\item $|\partial_\theta\ell_{\theta_{P_e},\eta_{P_e}}(z)[h]|\leq C(z)\|h\|_\Theta$ 
\item
$|\partial_\theta^2
\ell_{\theta,\eta}(z)[h,u]|
\le
C(z)\|h\|_\Theta\|u\|_\Theta$;

\item
$|\partial_\eta\partial_\theta
\ell_{\theta,\eta}(z)[h,v]|
\le
C(z)\|h\|_\Theta\|v\|_\Upsilon$.

\end{enumerate}

\smallskip
Furthermore, the derivatives are locally Lipschitz in this neighborhood:
\begin{enumerate}

\item
$\left|
\partial_\theta m_\theta(z)[h]
-
\partial_\theta m_{\theta'}(z)[h]
\right|
\le
C(z)\,
\|\theta-\theta'\|_\Theta
\|h\|_\Theta$;

\item
$\left|
\partial_\eta\partial_\theta
\ell_{\theta,\eta}(z)[h,v]
-
\partial_\eta\partial_\theta
\ell_{\theta',\eta'}(z)[h,v]
\right|
\le
C(z)
\left(
\|\theta-\theta'\|_\Theta
+
\|\eta-\eta'\|_\Upsilon
\right)
\|h\|_\Theta
\|v\|_\Upsilon$;

\item
$\left|
\partial_\theta^2
\ell_{\theta,\eta}(z)[h,u]
-
\partial_\theta^2
\ell_{\theta',\eta'}(z)[h,u]
\right|
\le
C(z)
\left(
\|\theta-\theta'\|_\Theta
+
\|\eta-\eta'\|_\Upsilon
\right)
\|h\|_\Theta
\|u\|_\Theta$.

\end{enumerate}
\end{assumption}

\cref{assumption:envelope} is used to translate consistency of the parameter estimates into asymptotic negligibility of the function-valued quantities that enter the one-step expansion. In particular, once \((\hat\theta,\hat\eta,\hat\alpha)\) lies in a local neighborhood of \((\theta_{P_e},\eta_{P_e},\alpha_{P_e})\), the envelope ensures that the associated influence function increments are dominated by an integrable random variable. This allows the population Taylor remainders in \cref{thm:pe_expansion} and the martingale remainder terms in the asymptotic normality proofs (\cref{thm:clt}, \cref{thm:self-normalized}) to be controlled by products of estimation errors. The local Lipschitz bounds play the analogous role for differences of derivative terms: they ensure that replacing \((\theta_{P_e},\eta_{P_e},\alpha_{P_e})\) by consistent estimates changes the induced functionals by an amount proportional to the corresponding estimation error. In finite-dimensional models, these Lipschitz conditions are implied by bounded third derivatives; here we state them directly in operator form so that the same argument applies to general nonparametric \(M\)-estimands. Similar sets of restrictions are common in standard proofs of asymptotic normality for $M$-estimators such as in \cite{van2000asymptotic}.
\begin{lemma}[Completion in the Hessian norm]\label{lemma:hilbert_completion}
Under Assumptions~\ref{assumption:reg_pe} and~\ref{assumption:envelope}, let
$\bar{\Theta}$ be the completion of $\Theta$ under
$\|h\|_{H_{P_e}}:=\{H_{P_e}(h,h)\}^{1/2}$.
The norm $\|\cdot\|_\Theta$ extends to an equivalent norm on $\bar{\Theta}$.
The derivatives entering the one-step expansion extend uniquely and continuously
in their $\Theta$-direction arguments, preserving their envelope and local
Lipschitz bounds. In particular,
$L(h):=\E_{P_e}[\partial_\theta m_{\theta_{P_e}}(Z)[h]]$
has a unique Riesz representer $\alpha_{P_e}\in\bar{\Theta}$, and
\[
\alpha_{P_e}
=\arg\min_{\alpha\in\bar{\Theta}}
\left\{\tfrac12 H_{P_e}(\alpha,\alpha)-L(\alpha)\right\}.
\]
\end{lemma}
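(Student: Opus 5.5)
The plan is to first prove norm equivalence, then extend the derivative maps by density, and finally apply the Riesz representation theorem in the Hilbert space $(\bar\Theta, H_{P_e})$.

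\textbf{Step 1: equivalence of the two norms on $\Theta$.} Condition~\ref{cond:hessian} of \cref{assumption:reg_pe} gives the lower bound $c\|h\|_\Theta^2\le \|h\|_{H_{P_e}}^2$. For the upper bound, item 4 of \cref{assumption:envelope}, applied at $(\theta_{P_e},\eta_{P_e})$ and integrated under $P_e$, gives
\[
H_{P_e}(h,h)\le \E_{P_e}[C(Z)]\,\|h\|_\Theta^2 .
\]
Here $\E_{P_e}[C(Z)]\le \E_{P_e}[C(Z)^4]^{1/4}<\infty$, so $\|\cdot\|_\Theta$ and $\|\cdot\|_{H_{P_e}}$ are equivalent on $\Theta$. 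Since $H_{P_e}$ is bilinear and symmetric, and symmetry of mixed Fr\'echet second derivatives passes to the expectation, $\|\cdot\|_{H_{P_e}}$ comes from an inner product.

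\textbf{Step 2: the completion.} Cauchy sequences coincide for equivalent norms. Hence $\bar\Theta$ is also the completion of $\Theta$ in $\|\cdot\|_\Theta$, and $\|\cdot\|_\Theta$ extends by continuity, $\|\bar h\|_\Theta:=\lim\|h_n\|_\Theta$. The extension is well defined and independent of the approximating sequence, and the constants from Step 1 pass to the limit, so it remains equivalent to $\|\cdot\|_{H_{P_e}}$. Thus $(\bar\Theta,H_{P_e})$ is a Hilbert space.

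\textbf{Step 3: extending the derivatives.} Fix $z$ and base points $(\theta,\eta)$ in the neighborhood. Each of the maps
\[
h\mapsto \partial_\theta m_\theta(z)[h],\qquad h\mapsto \partial_\theta\ell_{\theta_{P_e},\eta_{P_e}}(z)[h],
\]
\[
(h,u)\mapsto\partial_\theta^2\ell_{\theta,\eta}(z)[h,u],\qquad h\mapsto\partial_\eta\partial_\theta\ell_{\theta,\eta}(z)[h,v]
\]
is linear or bilinear in its direction arguments. By the envelope bounds it is bounded by $C(z)$ times the product of the $\Theta$-norms of those arguments, so it is Lipschitz and hence uniformly continuous. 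The standard extension theorem for bounded (multi)linear maps into $\R$ then gives a unique continuous extension to $\bar\Theta$. The envelope bounds and the local Lipschitz bounds are non-strict inequalities between continuous functions of the direction arguments, so they are preserved in the limit by density.

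\textbf{Step 4: measurability and interchange with $\E_{P_e}$.} For $\bar h\in\bar\Theta$, the extended derivative is a pointwise limit of measurable functions, hence measurable. Dominated convergence with dominating function $C(Z)\sup_n\|h_n\|_\Theta$, which is integrable, lets the limit commute with $\E_{P_e}$. It follows that $L$ and $H_{P_e}$ extend continuously, and their extensions agree with expectations of the extended integrands. I expect this step to be the main obstacle: it is the only place where uniqueness of the extension has to be checked at the level of random variables, including $P_e$-null-set issues. I would handle it by defining the extension $P_e$-almost everywhere via $L_1(P_e)$-Cauchy sequences, rather than pointwise in $z$.

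\textbf{Step 5: the Riesz representer.} By item 2 of \cref{assumption:envelope},
\[
|L(h)|\le \E_{P_e}[C(Z)]\,\|h\|_\Theta\lesssim\|h\|_{H_{P_e}},
\]
so $L$ is a bounded linear functional on the Hilbert space $(\bar\Theta,H_{P_e})$. The Riesz representation theorem gives a unique $\alpha_{P_e}$ with $L(h)=H_{P_e}(\alpha_{P_e},h)$ for all $h\in\bar\Theta$. Completing the square,
\[
\tfrac12H_{P_e}(\alpha,\alpha)-L(\alpha)=\tfrac12\|\alpha-\alpha_{P_e}\|_{H_{P_e}}^2-\tfrac12\|\alpha_{P_e}\|_{H_{P_e}}^2 .
\]
The right-hand side is strictly minimized exactly at $\alpha=\alpha_{P_e}$, which gives the variational characterization.
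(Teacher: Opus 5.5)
Your route matches the paper's proof step for step. Both use the two-sided comparison $c\|h\|_\Theta^2\le\|h\|_{H_{P_e}}^2\le \E_{P_e}[C(Z)]\,\|h\|_\Theta^2$, then completion, then extension of each bounded (multi)linear derivative map by density in its direction arguments, then dominated convergence to carry expectations (and with them the first-order and orthogonality identities) to the limit, and finally Riesz plus completing the square.

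There is one gap. The lemma claims to extend the first loss derivative $h\mapsto\partial_\theta\ell_{\theta,\eta}(z)[h]$ at \emph{nearby} base points $(\theta,\eta)\neq(\theta_{P_e},\eta_{P_e})$. This map appears in the expansion of \cref{thm:pe_expansion}, and it is later evaluated at elements of $\bar\Theta$, e.g.\ $\partial_\theta\ell_{\hat\theta_t,\hat\eta_t}(z)[\alpha_{P_e}]$ and $\partial_\theta\ell_{\hat\theta_t,\hat\eta_t}(z)[\hat\alpha_t-\alpha_{P_e}]$ in \cref{lemma:epe_f}. Your Step 3 list omits it. Your justification there (``by the envelope bounds it is bounded by $C(z)$ times the norms'') also does not cover it, because \cref{assumption:envelope} gives an envelope for $\partial_\theta\ell$ only at $(\theta_{P_e},\eta_{P_e})$ (item 3). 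Without a bound, you cannot invoke the bounded-extension theorem for this map.

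The fix is the one the paper uses. Set $\theta_u=\theta_{P_e}+u(\theta-\theta_{P_e})$ and $\eta_u=\eta_{P_e}+u(\eta-\eta_{P_e})$. For $h\in\Theta$, write $\partial_\theta\ell_{\theta,\eta}(z)[h]$ as its value at $(\theta_{P_e},\eta_{P_e})$ plus $\int_0^1\{\partial_\theta^2\ell_{\theta_u,\eta_u}(z)[\theta-\theta_{P_e},h]+\partial_\eta\partial_\theta\ell_{\theta_u,\eta_u}(z)[h,\eta-\eta_{P_e}]\}\,du$. Applying items 3--5 then gives
\[
|\partial_\theta\ell_{\theta,\eta}(z)[h]|\le C(z)\bigl(1+\|\theta-\theta_{P_e}\|_\Theta+\|\eta-\eta_{P_e}\|_\Upsilon\bigr)\|h\|_\Theta ,
\]
after which the map extends exactly like the others.

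Separately, the obstacle you anticipate in Step 4 does not arise. The envelopes hold for every $z$ with $C(z)<\infty$, so you can make the extension pointwise in $z$, and measurability follows from pointwise limits. This is preferable to working with $L_1(P_e)$ classes. Later arguments evaluate these maps at $Z_t\sim P_t$ rather than under $P_e$, and they rely on pointwise bounds such as $|\hat\varphi_s(z)|\le LC(z)$.
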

\begin{proof}
Let $M:=\E_{P_e}[C(Z)]<\infty$. Coercivity and the Hessian envelope give
\[
c\|h\|_\Theta^2\leq\|h\|_{H_{P_e}}^2
\leq M\|h\|_\Theta^2,\qquad h\in\Theta.
\]
Thus both norms have the same Cauchy sequences, and
$\|h\|_\Theta:=\lim_n\|h_n\|_\Theta$ is well defined whenever
$h_n\in\Theta$ converges to $h\in\bar{\Theta}$. The Hessian is symmetric
by the stated differentiability and local Lipschitz assumptions, so its
positive definite extension makes $\bar{\Theta}$ a Hilbert space.
Moreover,
\[
|L(h)|\leq M\|h\|_\Theta
\leq Mc^{-1/2}\|h\|_{H_{P_e}},
\]
so $L$ extends continuously and Riesz representation applies.

For the derivative evaluations, each envelope bound makes the images of
approximating direction sequences Cauchy. For example,
$|\partial_\theta m_\theta(z)[h_n-h_k]|
\leq C(z)c^{-1/2}\|h_n-h_k\|_{H_{P_e}}$.
The bilinear maps extend by approximation in each $\Theta$-direction argument;
their product bounds make the limits independent of the approximating sequences.
The loss derivative at a nearby $(\theta,\eta)$ is also bounded: integrating
the Hessian and mixed derivative along the segment from
$(\theta_{P_e},\eta_{P_e})$ gives
\[
|\partial_\theta\ell_{\theta,\eta}(z)[h]|
\leq C(z)\bigl(1+\|\theta-\theta_{P_e}\|_\Theta
+\|\eta-\eta_{P_e}\|_\Upsilon\bigr)\|h\|_\Theta.
\]
Taking limits preserves the envelope and Lipschitz inequalities.
Their integrable bounds justify passing expectations to the limit, including
the first-order and orthogonality identities. These extensions act only on
direction arguments; $\theta$ and $\eta$ remain in their original spaces.
Finally, $L(\alpha)=H_{P_e}(\alpha_{P_e},\alpha)$ implies
\[
\tfrac12H_{P_e}(\alpha,\alpha)-L(\alpha)
=\tfrac12\|\alpha-\alpha_{P_e}\|_{H_{P_e}}^2
-\tfrac12\|\alpha_{P_e}\|_{H_{P_e}}^2,
\]
which proves the claimed unique minimizer.
\end{proof}

\subsection{Proof of \cref{thm:pe_expansion}}
\begin{proof}
Recalling that $H_{P_e}(h,u):=\E_{P_e}\!\left[\partial_\theta^2 \ell_{\theta_{P_e},\eta_{P_e}}(Z)[h,u]\right]$, we construct a functional Taylor expansion at the evaluation law. Since $m_\theta$ is Fréchet differentiable, we have the expansion, we have that
\begin{align*}
\Psi_e(\bar\theta) - \Psi_e(\theta_{P_e})&=\E_{P_e}[m_{\bar\theta} - m_{\theta_{P_e}}] \\
&=\E_{P_e}\left[\partial_\theta m_{\theta_{P_e}}(z)[\bar\theta - \theta_{P_e}]\right] + R_{1},
\end{align*}
where
$r_1(z)
=
\int_0^1
\left\{
\partial_\theta m_{\theta_{P_e}+u(\bar\theta - \theta_{P_e})}(z)
-
\partial_\theta m_{\theta_{P_e}}(z)
\right\}[\bar\theta - \theta_{P_e}]\,du.
$ and and $R_1 = \E_{P_e}[r_1(Z)]$.
By \cref{assumption:envelope},
\[
|r_1(z)|
\leq
\int_0^1 C(z)u\|\hat\theta_t - \theta_{P_e}\|_\Theta^2\,du
\leq
C(z)\|\hat\theta_t - \theta_{P_e}\|_\Theta^2.
\]
Therefore
\[R_{1} = \E_{P_e}[r_1(Z)] \leq
|\E_{P_e}[r_1(Z)]|
\leq
\E_{P_e}[C(Z)]\|\hat\theta_t - \theta_{P_e}\|_\Theta^2.
\]
If \(m_\theta\) is linear in \(\theta\), then \(r_1(z)=0\) and thus $R_1 = 0 $.

By the Riesz representation identity, $
\E_{P_e}\left[\partial_\theta m_{\theta_{P_e}}(Z)[h]\right]
=
H_{P_e}(\alpha_{P_e},h)$ for all $ h \in \Theta$,
so
\[
\Psi_e(\bar\theta) - \Psi_e(\theta_{P_e})
=
H_{P_e}(\alpha_{P_e},\bar\theta - \theta_{P_e}) + R_1.
\]

We now expand $\E_{P_e}\!\left[\partial_\theta \ell_{\theta,\eta}(Z)[\alpha]\right]$ around $(\theta_{P_e},\eta_{P_e})$:
\[
\partial_\theta \ell_{\bar\theta,\bar\eta}(z)[\bar\alpha]
=
\partial_\theta \ell_{\theta_{P_e},\eta_{P_e}}(z)[\bar\alpha]
+
\partial_\theta^2 \ell_{\theta_{P_e},\eta_{P_e}}(z)[\bar\theta - \theta_{P_e},\bar\alpha]
+
\partial_\eta\partial_\theta \ell_{\theta_{P_e},\eta_{P_e}}(z)[\bar\alpha,\bar\eta - \eta_{P_e}]
+
r_\ell(z),
\]
where {$\theta_u=\theta_{P_e}+u(\bar\theta-\theta_{P_e})$ and $\eta_u=\eta_{P_e}+u(\bar\eta-\eta_{P_e})$, and} the remainder is given by
\[
r_2(z)
=
\int_0^1\,
\Big[
\partial_\theta^2 \ell_{\theta_u,\eta_u}(z)[\bar\theta - \theta_{P_e},\bar\alpha]
-
\partial_\theta^2 \ell_{\theta_{P_e},\eta_{P_e}}(z)[\bar\theta - \theta_{P_e},\bar\alpha]
\Big] du
\]
\[
\quad +
{\int_0^1\,}
\Big[
\partial_\eta\partial_\theta \ell_{\theta_u,\eta_u}(z)[\bar\alpha,\bar\eta - \eta_{P_e}]
-
\partial_\eta\partial_\theta \ell_{\theta_{P_e},\eta_{P_e}}(z)[\bar\alpha,\bar\eta - \eta_{P_e}]
\Big] du.
\]
Taking expectation,
\[
\E_{P_e}\left[\partial_\theta \ell_{\bar\theta,\bar\eta}(z)[\bar\alpha]\right]
=
\E_{P_e}[\partial_\theta \ell_{\theta_{P_e},\eta_{P_e}}(Z)[\bar\alpha]]
+
H_{P_e}(\bar\theta - \theta_{P_e},\bar\alpha)
+
\E_{P_e}[\partial_\eta\partial_\theta \ell_{\theta_{P_e},\eta_{P_e}}(Z)[\bar\alpha,\bar\eta - \eta_{P_e}]]
+
R_2,
\]
where $R_2 := \E_{P_e}[r_2(Z)]$.

Under the envelope bound of \cref{assumption:envelope}, there exists $C(z)$ such that
\[
|\partial_\theta^2 \ell_{\theta,\eta}(z)[h,u]|
\le C(z)\|h\|_\Theta\|u\|_\Theta,
\]
\[
|\partial_\eta\partial_\theta \ell_{\theta,\eta}(z)[h,u]|
\le C(z)\|h\|_\Theta\|u\|_\Upsilon.
\]
Noting that the integrand is constant in the case that $\delta_{\mathrm{quad}} =0$, we have that
\[
|r_2(z)|
\lesssim
C(z)\|\bar\alpha\|_\Theta
\left(
\delta_{\mathrm{quad}}\|\bar\theta - \theta_{P_e}\|_\Theta^2
+
\|\bar\theta - \theta_{P_e}\|_\Theta\|\bar\eta - \eta_{P_e}\|_\Upsilon
+
\|\bar\eta - \eta_{P_e}\|_\Upsilon^2
\right).
\]
Taking expectations,
\[
|R_2|
\lesssim
\|\bar\alpha\|_\Theta
\left(
\delta_{\mathrm{quad}}\|\bar\theta - \theta_{P_e}\|_\Theta^2
+
\|\bar\theta - \theta_{P_e}\|_\Theta\|\bar\eta - \eta_{P_e}\|_\Upsilon
+
\|\bar\eta - \eta_{P_e}\|_\Upsilon^2
\right).
\]

By the first order conditions, we have that \cref{assumption:reg_pe}
\[
\E_{P_e}[\partial_\theta \ell_{\theta_{P_e},\eta_{P_e}}(Z)[h]] = 0, \quad \text{ and } \quad
\E_{P_e}[\partial_\eta\partial_\theta \ell_{\theta_{P_e},\eta_{P_e}}(Z)[h,u]] = 0.
\]
Thus,
\begin{align*}
\E_{P_e} \left[\partial_\theta \ell_{\bar\theta,\bar\eta}(z)[\bar\alpha]\right]
&=
H_{P_e}(\bar\theta - \theta_{P_e},\bar\alpha) + R_2 \\
&=H_{P_e}(\alpha_{P_e},\bar\theta - \theta_{P_e})
+
H_{P_e}(\bar\alpha-\alpha_{P_e},\bar\theta - \theta_{P_e})+ R_2 
\end{align*}
Note that under bilinearity $H_{P_e}(\alpha_{P_e},\bar\theta - \theta_{P_e})
=
\E_{P_e} \left[\partial_\theta \ell_{\bar\theta,\bar\eta}(z)[\bar\alpha]\right]
-
H_{P_e}(\bar\alpha-\alpha_{P_e},\bar\theta - \theta_{P_e})
-
R_2$
Putting everything together, we have that
\[
\Psi_e(\bar\theta) - \Psi_e(\theta_{P_e})
=
\E_{P_e}[\partial_\theta \ell_{\bar\theta,\bar\eta}(Z)[\bar\alpha]]
-
H_{P_e}(\bar\alpha - \alpha_{P_e},\bar\theta - \theta_{P_e})
+
R_1 - R_2.
\]

Combining the bounds on $R_1$ and $R_2$ yields the result.
\end{proof}

\subsection{Proof of \cref{thm:clt}}

\begin{proof}
Let us rewrite
\begin{align*}
\frac{ \hat{A}_T}{\sqrt T}(\hat\Psi_T^{\mathrm{os}} - \Psi_e(\theta_{P_e})) &= \frac{1}{\sqrt{T}} \sum_{t=1}^{T} \hat{\sigma}_{t}^{-1} w_{t} \left( {m_{\hat\theta_t}(Z_t)-\dot\ell_{\hat\theta_t,\hat\eta_t}(Z_t)[\hat\alpha_t]} - \Psi_{e}(\theta_{P_e})\right)     
\end{align*}
Let $F(\theta,\eta,\alpha)(z) :=m_{\theta}(z)-\dot\ell_{\theta,\eta}(z)[\alpha]$. Add and subtract $ F(\theta_{P_e},\eta_{P_e},\alpha_{P_e})$ inside the summand. We then have that
\begin{align*}\frac{ \hat{A}_T}{\sqrt T}(\hat\Psi_T^{\mathrm{os}} - \Psi(P_e))= \underbrace{\frac{1}{\sqrt T}\sum_{t=1}^T \hat{\sigma}_t^{-1} w_t \varphi_t}_{\overset{d}{\to}  N(0,1)} + \underbrace{\frac{1}{\sqrt{T}} \sum_{t=1}^{T}\hat \sigma_t^{-1} w_t \left(F(\hat\theta_t,\hat \eta_t,\hat \alpha_t)(Z_t) - F(\theta_{P_e},\eta_{P_e},\alpha_{P_e})(Z_t) \right)}_{\text{Remainder }} .
\end{align*}
The first term converges to $N(0,1)$ immediately by applying a martingale central limit theorem (\cref{prop:mds_clt}). We enumerate some sufficient conditions for the remainder to be asymptotically negligible in \cref{lemma:remainder}.

Conditions (a) of \cref{lemma:remainder} is true by assumption, so all that remains is verifying conditions (b) - (e). To this end, we can apply \cref{lemma:empirical_predictable}. We verify the conditions of \cref{lemma:empirical_predictable} for each term one-by one:
\paragraph{Showing $P_T^{a}\Big(
\dot{m}_{\theta_{P_e}}(\cdot)[\hat\theta_t - \theta_{P_e}]
-
\ddot{\ell}_{\theta_{P_e},\eta_{P_e}}(\cdot)
[\hat\theta_t - \theta_{P_e}, \alpha_{P_e}]
\Big) = o_p(T^{-1/2})$}

We note that because $\hat{\theta}_{t}$ is $\mathcal{F}_{t-1}$ measurable, it is a constant conditional on $\mathcal{F}_{t-1}$. Therefore, if we let $f_{t} := \dot{m}_{\theta_{P_e}}(\cdot)[\hat\theta_t - \theta_{P_e}]
-
\ddot{\ell}_{\theta_{P_e},\eta_{P_e}}(\cdot)
[\hat\theta_t - \theta_{P_e}, \alpha_{P_e}]$, we have:
\begin{align*}
\E\left[ w_t f_t(Z_t) \mid \mathcal F_{t-1}\right] &=\E\left[ w_t\left(\dot{m}_{\theta_{P_e}}(Z_t)[\hat\theta_t - \theta_{P_e}]
-
\ddot{\ell}_{\theta_{P_e},\eta_{P_e}}(Z_t)[\hat\theta_t - \theta_{P_e},\alpha_{P_e}]\right)  \mid \mathcal F_{t-1}\right] \\
&= \E_{p_{e}}\left[\dot{m}_{\theta_{P_e}}(Z)[\hat\theta_t - \theta_{P_e}] -
\ddot{\ell}_{\theta_{P_e},\eta_{P_e}}(Z) [\hat\theta_t - \theta_{P_e},\alpha_{P_e}] \right]  \\
&= 0,
\end{align*}
where for the last step we use the the properties of the Riesz representer $\alpha_{P_e}$ . Condition (b) is true by assumption. For condition c, we have that $\sup_{(c,a)} w_{t} < K_{T}$ for some $K_T = O(T^{1/3})$. Therefore, 

\begingroup
\begin{align*}
\frac1{T^2}\sum_{t=1}^T\left\{\E[w_t^2C(Z_t)^2\mid\mathcal F_{t-1}]\right\}^2
&\leq\frac1{T^2}\sum_{t=1}^T\left\{K_T\E[w_tC(Z_t)^2\mid\mathcal F_{t-1}]\right\}^2\\
&=\frac{K_T^2}{T}(\E_{P_e}[C^2])^2=O(T^{-1/3})=O(1).
\end{align*}
\endgroup

\paragraph{Showing $P_T^{a}\Big(
\,\partial_{\eta} \partial_{\theta}\ell_{\theta_{P_e},\eta_{P_e}}(\cdot)
{[\alpha_{P_e},\hat\eta_t - \eta_{P_e}]}
\Big) =  o_p(T^{-1/2})$} The arguments are  the same as for the previous term, with the only step differing in showing $\E\!\left[ w_t f_t(Z_t) \mid \mathcal F_{t-1}\right] = 0$ for each $t \in [T]$. Here, we have:
\begin{align*}
\E\left[ w_t f_t(Z_t) \mid \mathcal F_{t-1}\right] &=\E\left[ w_t\partial_{\eta} \partial_{\theta}\ell_{\theta_{P_e},\eta_{P_e}}(Z_t)
{[\alpha_{P_e},\hat\eta_t - \eta_{P_e}]}  \mid \mathcal F_{t-1}\right] \\
&= \E_{p_{e}}\left[\partial_{\eta} \partial_{\theta}\ell_{\theta_{P_e},\eta_{P_e}}(Z)
{[\alpha_{P_e},\hat\eta_t - \eta_{P_e}]} \right]  \\
&= 0,
\end{align*}
with the last line following from the Neyman orthogonality condition in \cref{assumption:reg_pe} that
 $
\E_{P_e}\!\left[\partial_\eta \partial_\theta \ell_{\theta_{P_e},\eta_{P_e}}(Z)[h,u]\right] = 0$ for all $h \in \Theta$ and $u \in \Upsilon$.
\paragraph{Showing $P_T^{a}\Big(
\,\partial_\theta \ell_{\theta_{P_e},\eta_{P_e}}(\cdot)
[\hat \alpha_t - \alpha_{P_e}]
\Big) = o_p(T^{-1/2})$} 
The arguments are  the same as for the previous term, with the only step differing in showing $\E\!\left[ w_t f_t(Z_t) \mid \mathcal F_{t-1}\right] = 0$ for each $t \in [T]$. Here, we have:
\begin{align*}
\E\left[ w_t f_t(Z_t) \mid \mathcal F_{t-1}\right] &=\E\left[ w_t\partial_\theta \ell_{\theta_{P_e},\eta_{P_e}}(Z_t)
[\hat \alpha_t - \alpha_{P_e}]  \mid \mathcal F_{t-1}\right] \\
&= \E_{p_{e}}\left[\partial_\theta \ell_{\theta_{P_e},\eta_{P_e}}(Z_t)
[\hat \alpha_t - \alpha_{P_e}] \right]  \\
&= 0,
\end{align*}
with the last line following from the fact that the first order condition of \cref{assumption:reg_pe} states that  $\E_{P_e}\!\left[
\partial \theta \ell_{\theta_{P_e},\eta_{P_e}}(Z)[h]
\right] = 0$ for all $h \in \Theta$.
Finally, all that remains is showing condition (e) of \cref{lemma:remainder} is satisfied. 

\paragraph{Showing$\frac{1}{T^2}\ \sum_{t=1}^{T} \E\left[ w_{t}^{4}C(Z_t)^{4} \mid \mathcal{F}_{t-1}\right] =O_p(1)$.}
This is almost the same as in the prior step. All we need to show is:

\begingroup
\begin{align*}
\frac1{T^2}\sum_{t=1}^T\E[w_t^4C(Z_t)^4\mid\mathcal F_{t-1}]
&\leq\frac{K_T^3}{T^2}\sum_{t=1}^T\E[w_tC(Z_t)^4\mid\mathcal F_{t-1}]\\
&=\frac{K_T^3}{T}\E_{P_e}[C^4]=O(1).
\end{align*}
\endgroup

Thus, the remainder is asymptotically negligible and asymptotic normality holds. 
\end{proof}

\subsection{Proof of \cref{thm:self-normalized}}

\begin{proof}

Define $F(\theta,\eta,\alpha)(z) :=
m_\theta(z)-\partial_\theta \ell_{\theta,\eta}(z)[\alpha]$, $F_0(z):=F(\theta_{P_e},\eta_{P_e},\alpha_{P_e})(z)$  and set
$\Psi_e := \Psi_e(\theta_{P_e})$, and $D_t := w_t\big(F_0(Z_t)-\Psi_e\big) $ Define the plug-in quantities
\[
\hat D_t := w_t\big({F(\hat\theta_t,\hat\eta_t,\hat\alpha_t)(Z_t)}-\bar\Psi_T\big),
\quad \text{for } t > 2m_T,
\]
and set $\hat D_t := 0$ for $t \le 2m_T$. Then we can write

\begingroup
\begin{align*}
\hat V_T^{-1/2}(\hat S_T-\Psi_e)
&=\frac{A_T^{-1}\sum_{t=1}^T\{w_tF(\hat\theta_t,\hat\eta_t,\hat\alpha_t)(Z_t)-w_t\Psi_e\}}
{A_T^{-1}(\sum_{t=2m_T+1}^T\hat D_t^2)^{1/2}}\\
&=\sqrt T\,\frac{T^{-1}\sum_{t=1}^T\{w_tF(\hat\theta_t,\hat\eta_t,\hat\alpha_t)(Z_t)-w_t\Psi_e\}}
{(T^{-1}\sum_{t=2m_T+1}^T\hat D_t^2)^{1/2}}.
\end{align*}
\endgroup

Define
\[
{\tilde S_T = \frac{1}{T}\sum_{t=1}^T\{w_tF(\hat\theta_t,\hat\eta_t,\hat\alpha_t)(Z_t)-w_t\Psi_e(\theta_{P_e})\},}
\qquad
{\widetilde V_T = \frac{1}{T}\sum_{t=2m_T+1}^T \hat D_t^2.}
\]
It now suffices to verify the conditions of \cref{prop:self_normalized_plugin} for $\{D_t,\hat D_t,\tilde{S}_{t}\}$ in order to demonstrate asymptotic normality.

By construction, $\{D_t\}_{t=1}^{T}$ is a martingale difference sequence with respect to $\{\mathcal F_{t-1}\}_{t=1}^{T}$. Assumptions (a) and (c) are true by {condition~\ref{cond:finite_rv} of \cref{thm:self-normalized}} and condition~\ref{cond:lindeberg} of \cref{thm:clt} respectively. We now verify the remaining pieces of the lemma, namely
\begin{enumerate}
\item (Realized Variance) $(TV_{T})^{-1}\sum_{t=1}^{T} D_{t}^{2} \overset{p}{\to} 1 $; 
\item (Plug-in Rate 1) $\tilde{S}_T - S_T = o_p(T^{-1/2})$
\item (Plug-in Rate 2) $\frac{1}{T}\sum_{t=1}^T (\hat D_t - D_t)^2 = o_p(1)$.
\end{enumerate}

\paragraph{Realized Variance} We wish to show $(TV_{T})^{-1}\sum_{t=1}^{T} D_{t}^{2} \overset{p}{\to} 1 $. Writing this out,
\[
\frac{1}{T}\sum_{t=1}^T D_t^2 - V_T
=
\frac{1}{T}\sum_{t=1}^T
\left\{
D_t^2 - E[D_t^2 \mid \mathcal F_{t-1}]
\right\}.
\]
We have that
\[
{\E[\{D_t^2-\E[D_t^2\mid\mathcal F_{t-1}]\}^2\mid\mathcal F_{t-1}]}
\le
E[D_t^4 \mid \mathcal F_{t-1}].
\]

\begingroup
By \cref{lemma:fourth_IF}, the conditional fourth moment is $O_p(K_T^3)$, so this untruncated variance bound is $O_p(K_T^3/T)$ and need not vanish at the allowed endpoint. We retain the centered-square argument but truncate its increments using the assumed Lindeberg condition. For $\epsilon>0$ define
\[
U_{T,t}=D_t^2\mathbf1\{|D_t|\leq\epsilon\sqrt T\},\qquad
L_T(\epsilon)=\frac1T\sum_t\E[D_t^2\mathbf1\{|D_t|>\epsilon\sqrt T\}\mid\mathcal F_{t-1}].
\]
Then $L_T(\epsilon)\to_p0$. For a nonnegative adapted sum $X_T$ with predictable sum $A_T$, stopping before the next predictable increment would make $A_T$ exceed $b$ and applying Markov's inequality gives
\[
\Pr(X_T>\eta)\leq b/\eta+\Pr(A_T>b).
\]
Apply this to $X_T=T^{-1}\sum_tD_t^2\mathbf1\{|D_t|>\epsilon\sqrt T\}$ and $A_T=L_T(\epsilon)$: the realized tail also tends to zero in probability. For the retained increments,
\begin{align*}
\frac1{T^2}\sum_t\E[\{U_{T,t}-\E[U_{T,t}\mid\mathcal F_{t-1}]\}^2\mid\mathcal F_{t-1}]
&\leq\frac1{T^2}\sum_t\E[D_t^4\mathbf1\{|D_t|\leq\epsilon\sqrt T\}\mid\mathcal F_{t-1}]\\
&\leq\epsilon^2\frac1T\sum_t\E[D_t^2\mid\mathcal F_{t-1}]=\epsilon^2V_T.
\end{align*}
The stopped martingale second-moment bound consequently gives
\[
\Pr\left(\left|T^{-1}\sum_t\{U_{T,t}-\E[U_{T,t}\mid\mathcal F_{t-1}]\}\right|>\eta\right)
\leq\epsilon^2L/\eta^2+\Pr(V_T>L).
\]
First choose $L$ large using $V_T\to_p\kappa<\infty$, then take $T\to\infty$ and $\epsilon\downarrow0$. Combining retained increments, their predictable tails, and their realized tails proves the centered-square average is $o_p(1)$.
\endgroup
{The preceding truncated second-moment argument yields}
\[
\frac{1}{T}\sum_{t=1}^T
\left\{
D_t^2 - E[D_t^2 \mid \mathcal F_{t-1}]
\right\}
= o_p(1).
\]
Therefore,
$\frac{1}{T}\sum_{t=1}^T D_t^2
=
V_T + o_p(1)$. Since \(V_T \xrightarrow{p} \kappa\) with \(\kappa>0\) almost surely, it follows that
$
\frac{1}{T V_T}\sum_{t=1}^T D_t^2
=
1 + o_p(1)$.

\paragraph{Showing $\tilde{S}_T - S_T = o_p(T^{-1/2})$}
Note that {$S_T=T^{-1}\sum_{t=1}^T\{w_tF_0(Z_t)-w_t\Psi_e\}$}. Therefore, 
\begin{align*} 
\tilde S_T-S_T
=
\frac1T\sum_{t=1}^T
w_t\{F(\hat\theta_t,\hat\eta_t,\hat\alpha_t)(Z_t)-F_0(Z_t)\}.
\end{align*}
Demonstrating this quantity is $o_p(T^{-1/2})$ is precisely the same argument as showing the remainder in \cref{thm:clt} is $o_p(T^{-1/2})$. The only difference is that in this case we let $\hat{\sigma}_{t} = 1$ for all $t\in[T]$ which is allowed because the only requirement is that the weights are almost surely bounded. Repeating these arguments with this choice of $\hat{\sigma}_{t}$ yields the result.

\paragraph{Showing $\frac{1}{T}\sum_{t=1}^T (\hat D_t - D_t)^2 = o_p(1)$} We first show that
$\bar\Psi_T-\Psi_e=o_p(1)$.
Note that 
\begin{align*}
\bar\Psi_T - \Psi_{e}
&=
\frac1{m_T}
\sum_{t=m_T+1}^{2m_T}
w_tF(\bar\theta,\bar\eta,\bar\alpha)(Z_t) - \Psi_{e} \\
&=\underbrace{\left(\frac1{m_T}
\sum_{t=m_T+1}^{2m_T}
w_tF(\bar\theta,\bar\eta,\bar\alpha)(Z_t)
-
\E_{P_e}\left[F(\bar\theta,\bar\eta,\bar\alpha)(Z)\right]\right)}_{R_1} + \underbrace{\left(\E_{P_e}\left[F(\bar\theta,\bar\eta,\bar\alpha)(Z)\right]-\E_{P_e}\left[F_0(Z)\right]\right)}_{R_2}
\end{align*}
To bound $R_1$, note that $\E \left[ w_tF(\bar\theta,\bar\eta,\bar\alpha)(Z_t) \mid \mathcal{F}_{t-1}\right] = \E_{P_e}\left[F(\bar\theta,\bar\eta,\bar\alpha)(Z)\right]$ {so the centered summands are martingale differences. On the predictable localization event of Lemma~\ref{lemma:fourth_IF}, $|F(\bar\theta,\bar\eta,\bar\alpha)|\leq L C$. Thus $\E[(w_t\bar F-\E_{P_e}[\bar F])^2\mid\mathcal F_{t-1}]\leq K_T\E_{P_e}[\bar F^2]\leq K_TL^2\E_{P_e}[C^2]$. Summing over the $m_T$ centering observations and dividing by $m_T^2$ gives $R_1=O_p(\sqrt{K_T/m_T})$. There is no shrinking-function factor here to justify a little-o root-$m_T$ bound.}

To bound $R_{2}$, we can apply \cref{thm:pe_expansion} because this term is precisely
\begin{align*}
\E_{P_e}\left[F(\bar\theta,\bar\eta,\bar\alpha)(Z)\right]-\E_{P_e}\left[F_0(Z)\right] &= \Psi_e(\bar\theta)-\Psi_e(\theta_{P_e}) -  \E_{P_e}\left[\partial_\theta\ell_{\bar\theta,\bar\eta}[\bar\alpha]\right]\\
&=
-
H_{P_e}[\bar\alpha-\alpha_{P_e},\,\bar\theta-\theta_{P_e}]   + O\left(
{(1+\|\bar\alpha\|_\Theta)}
\|\bar\theta - \theta_{P_e}\|_\Theta^2
\right)\\
& \qquad +
O\left(\|\bar\alpha\|_\Theta\left(\|\bar\theta-\theta_{P_e}\|_\Theta
\|\bar\eta-\eta_{P_e}\|_\Upsilon+\|\bar\eta-\eta_{P_e}\|_\Upsilon^2\right)\right).
\end{align*}
The requirements that $ {\|\bar\theta - \theta_{P_e}\|_\Theta =o_p(m_{T}^{-1/4})}$,$\|\bar\eta-\eta_{P_e}\|_\Upsilon=o_p(m_{T}^{-1/4})$, ${\|\bar\alpha-\alpha_{P_e}\|_\Theta}=o_p(m_{T}^{-1/4})$  is sufficient when combined with the results of \cref{lemma:bounded_riesz} which shows that $\|\bar\alpha\|_\Theta$ is {bounded in probability} to conclude that $R_2 = o_p(m_{T}^{-1/2})$.
Together, this shows {$\bar\Psi_T-\Psi_e=O_p(\sqrt{K_T/m_T})+o_p(m_T^{-1/2})$, and hence $K_T(\bar\Psi_T-\Psi_e)^2=O_p(K_T^2/m_T)+o_p(K_T/m_T)=o_p(1)$}.

We now move to showing the result. First, note that $\frac1T\sum_{t=1}^{2m_T}D_t^2=o_p(1)$,
{because, for the stated fixed martingale sequence, condition~\ref{cond:finite_rv} also gives $V_{2m_T}\to_p\kappa$. Therefore $T^{-1}\sum_{t\leq2m_T}\sigma_t^2=(2m_T/T)V_{2m_T}=o_p(1)$, since $m_T=o(T)$. The nonnegative-process bound established above gives the realized conclusion. This derives the needed negligibility from the existing assumptions; no additional discarded-block condition is required. For a horizon-dependent array, terminal variance convergence alone would not justify substituting $2m_T$ for $T$ in this way.} 

For \(t>2m_T\),
\[
\hat D_t-D_t
=
w_t\{{F(\hat\theta_t,\hat\eta_t,\hat\alpha_t)}(Z_t)-F_0(Z_t)\}
+
w_t(\Psi_e-\bar\Psi_T).
\]
Therefore,
\[
(\hat D_t-D_t)^2
\lesssim
w_t^2\{{F(\hat\theta_t,\hat\eta_t,\hat\alpha_t)}-F_0\}^2(Z_t)
+
w_t^2(\bar\Psi_T-\Psi_e)^2.
\]
Denote {$\sup_{t,c,a}\pi_e/\pi_t\leq K_T$}. {Since $\E[w_t^2\mid\mathcal F_{t-1}]\leq K_T$, Markov\textquotesingle s inequality gives $T^{-1}\sum_tw_t^2=O_p(K_T)$.} We can therefore write.

\begin{align*}
 \frac1T\sum_{t=2m_T+1}^T(\hat D_t-D_t)^2
&\lesssim
\frac1T\sum_{t=2m_T+1}^T
w_t^2\{{F(\hat\theta_t,\hat\eta_t,\hat\alpha_t)}-F_0\}^2(Z_t) + (\bar\Psi_T-\Psi_e)^2
\frac1T\sum_{t=2m_T+1}^T w_t^2 \\
& \lesssim \left(\frac{K_T}{T} \sum_{t=2m_T+1}^T
w_t\{{F(\hat\theta_t,\hat\eta_t,\hat\alpha_t)}-F_0\}^2(Z_t)\right) + {O_p(K_T)(\bar\Psi_T-\Psi_e)^2}.\\
\end{align*}

\begingroup
To control the first term, use Lemma~\ref{lemma:epe_f}, whose retained Taylor calculation gives
\[
\frac1T\sum_t\E_{P_e}\!\left[\{F(\hat\theta_t,\hat\eta_t,\hat\alpha_t)-F_0\}^2\right]=o_p(T^{-1/2}).
\]
The summands are predictable as functions of $Z$, so the nonnegative-process bound applied after multiplying by $\sqrt T$ yields
\[
\frac1T\sum_{t>2m_T}w_t\{F(\hat\theta_t,\hat\eta_t,\hat\alpha_t)-F_0\}^2(Z_t)=o_p(T^{-1/2}).
\]
Combining this with the centering calculation gives
\[
\frac1T\sum_{t>2m_T}(\hat D_t-D_t)^2
=o_p(K_TT^{-1/2})+O_p(K_T^2/m_T)+o_p(K_T/m_T).
\]
\endgroup
Thus,  $\frac{1}{T}\sum_{t=1}^T (\hat D_t - D_t)^2 = o_p(1)$ so long as $K_{T} = o(m_{T}^{1/2})$. 

\end{proof}

\subsection{Proof of \cref{thm:var_estimate}}
\begin{proof}

\begingroup
First, for each fixed candidate $\pi$, define the predictable random function
\[
G_s(\pi):=\E\!\left[\frac{\pi_e^2(A_s\mid C_s)}{\pi(A_s\mid C_s)\pi_s(A_s\mid C_s)}\hat\varphi_s^2(Z_s)\,\middle|\,\mathcal F_{s-1}\right]
=\E_{P_e}\!\left[\frac{\pi_e}{\pi}\hat\varphi_s^2\right].
\]
Set $\bar\sigma_t^2=(t-1)^{-1}\sum_{s<t}G_s(\pi_t)$ for $t\geq2$. In general $G_s(\pi_t)$ is not the conditional expectation obtained by leaving the future random $\pi_t$ inside the conditioning on $\mathcal F_{s-1}$. The initialized first-round squared error is $O_p(K_T^2/T)=o_p(1)$, since $\sigma_1^2\leq K_T\E_{P_e}[\varphi_{P_e}^2]$; below all historical averages start at $t=2$.
\endgroup

Consider the following decomposition
$$\hat{\sigma}_{t}^{2} - \sigma_{t}^{2} 
=\left( \hat{\sigma}_{t}^{2} -\E\left[\hat{\varphi}_{t}^{2} w_{t}^{2}|\Filt \right] \right)+ \left(\E\left[w_{t}^{2}\left(\hat{\varphi}_{t}^{2}-\varphi_{t}^{2}\right)|\Filt \right]\right).$$
Define $w_{t,s} = \frac{\pi_e(A_s\mid C_s)}
{\pi_t(A_s\mid C_s)}$to be the transported importance weights defined at time $t$ transported to observation $Z_s$. Note that  $\frac{\pi_e(A_s\mid C_s)^2}
{\pi_t(A_s\mid C_s)\pi_s(A_s\mid C_s)}=w_{t,s}w_{s}$. Summing over $t\in [T]$ and applying the Cauchy--Schwarz inequality twice yields 

\begingroup
\begin{align*}
\frac1T\sum_{t=2}^T(\hat\sigma_t^2-\sigma_t^2)^2
&=\frac1T\sum_{t=2}^T\left\{(\hat\sigma_t^2-\E[w_t^2\hat\varphi_t^2\mid\Filt])
+\E[w_t^2(\hat\varphi_t^2-\varphi_t^2)\mid\Filt]\right\}^2\\
&\leq\frac2T\sum_{t=2}^T(\hat\sigma_t^2-\E[w_t^2\hat\varphi_t^2\mid\Filt])^2
+\frac2T\sum_{t=2}^T\{\E[w_t^2(\hat\varphi_t^2-\varphi_t^2)\mid\Filt]\}^2\\
&\leq\underbrace{\frac4T\sum_{t=2}^T\left\{\frac1{t-1}\sum_{s<t}
[\hat\varphi_s^2(Z_s)w_sw_{t,s}-G_s(\pi_t)]\right\}^2}_{R_1}\\
&\quad+\underbrace{\frac4T\sum_{t=2}^T(\bar\sigma_t^2-\E[w_t^2\hat\varphi_t^2\mid\Filt])^2}_{R_2}
+\underbrace{\frac2T\sum_{t=2}^T\{\E[w_t^2(\hat\varphi_t^2-\varphi_t^2)\mid\Filt]\}^2}_{R_3}.
\end{align*}
\endgroup
 
$R_1$ is an empirical process term whose summands we can bound uniformly over time by an appeal to the maximal inequality in {Fact~\ref{fact:EMI}, with a log-policy entropy argument as in \cite{BIC_regret}}. $R_{2}$ is an approximation error drift term. $R_{3}$ can be bound with roughly the same asymptotic arguments as described in \cref{thm:clt}. We now turn to each piece separately. 

\paragraph{ Showing $R_1 =o_p(1)$}

\begingroup
Choose a deterministic $t_0=o(T)$ with $t_0\to\infty$. Let $R_1=4T^{-1}\sum_{t=2}^T A_t$, where
\[
A_t=\left\{\frac1{t-1}\sum_{s<t}\bigl[\hat\varphi_s^2(Z_s)w_sw_{t,s}-G_s(\pi_t)\bigr]\right\}^2.
\]
We use the envelope established in Lemma~\ref{lemma:fourth_IF}. Its bounds are in probability, so before applying unconditional moment inequalities we stop predictably at the first round whose fitted norms or absolute centering value exceed a fixed bound $L$, or whose nuisance estimates leave the stated neighborhoods. Set subsequent fitted scores to zero. The stopping indicator at round $s$ is $\mathcal F_{s-1}$-measurable. For every $\eta>0$, $L$ can be chosen so that the probability of altering any score has limit superior at most $\eta$. For this stopped process, after enlarging the constant $L$ if necessary,
\[
|\hat\varphi_s(z)|\leq L C(z),\qquad C\geq1,\qquad \E_{P_e}[C^4]<\infty
\]
holds for every $s,z$. Constants below may depend on fixed $L$, but not on $T$. We first prove the bounds for this process and then remove the stopping.

We also choose a truncation threshold $M_T$ and decompose
\[
\hat\varphi_s^2w_sw_{t,s}
=\hat\varphi_s^2w_sw_{t,s}\mathbf1\{w_s\hat\varphi_s^2(Z_s)\leq M_T\}
+\hat\varphi_s^2w_sw_{t,s}\mathbf1\{w_s\hat\varphi_s^2(Z_s)>M_T\}.
\]
For a fixed candidate policy $\pi$, write $w_{\pi,s}=\pi_e(A_s\mid C_s)/\pi(A_s\mid C_s)$ and define
\[
\begin{aligned}
C_t^{(M)}(\pi)
:=&\frac1{t-1}\sum_{s<t}\Bigl(
\hat\varphi_s^2(Z_s)w_sw_{\pi,s}\mathbf1\{w_s\hat\varphi_s^2(Z_s)\leq M_T\}
\\&\hspace{1em}-\E[\hat\varphi_s^2(Z_s)w_sw_{\pi,s}\mathbf1\{w_s\hat\varphi_s^2(Z_s)\leq M_T\}\mid\mathcal F_{s-1}]
\Bigr),
\end{aligned}
\]
\[
\begin{aligned}
C_t^{(\mathrm{tail})}(\pi)
:=&\frac1{t-1}\sum_{s<t}\Bigl(
\hat\varphi_s^2(Z_s)w_sw_{\pi,s}\mathbf1\{w_s\hat\varphi_s^2(Z_s)>M_T\}
\\&\hspace{1em}-\E[\hat\varphi_s^2(Z_s)w_sw_{\pi,s}\mathbf1\{w_s\hat\varphi_s^2(Z_s)>M_T\}\mid\mathcal F_{s-1}]
\Bigr).
\end{aligned}
\]
All compensators are defined for fixed $\pi$; $C_t^{(M)}(\pi_t)$ and $C_t^{(\mathrm{tail})}(\pi_t)$ denote evaluation of the resulting random maps. This avoids treating $\pi_t$ as known at time $s-1$. The two indicators are complementary in every empirical term and compensator.

Since $\pi_s\in\Pi$ and $\sup_{\pi,c,a}\pi_e(a\mid c)/\pi(a\mid c)\leq K_T=O(T^{1/8})$, we have
\[
0\leq\hat\varphi_s^2w_sw_{\pi,s}\mathbf1\{w_s\hat\varphi_s^2(Z_s)\leq M_T\}
\leq K_TM_T=:B_T.
\]
For $d(\pi,\pi')=\|\log\pi-\log\pi'\|_\infty\leq1$,
\begin{align*}
|w_{\pi,s}-w_{\pi',s}|
&=w_{\pi,s}|1-\exp\{\log\pi(A_s\mid C_s)-\log\pi'(A_s\mid C_s)\}|\\
&\leq eK_Td(\pi,\pi').
\end{align*}
Multiplication by $w_s\hat\varphi_s^2\mathbf1\{w_s\hat\varphi_s^2\leq M_T\}\leq M_T$ gives the Lipschitz bound $eB_Td(\pi,\pi')$ needed in Fact~\ref{fact:EMI}. That fact, proved below with its moment bound, yields
\[
\E\sup_{\pi\in\Pi}|C_t^{(M)}(\pi)|^2\lesssim\frac{B_T^2}{t-1},
\qquad
\Pr\left\{\sup_{\pi\in\Pi}|C_t^{(M)}(\pi)|>
\frac{cB_T}{\sqrt{t-1}}(1+\sqrt{\log(1/\delta)})\right\}\leq\delta.
\]
Applying the probability bound with $\delta=T^{-2}$ and a union bound over $t\in\{t_0,\ldots,T\}$ gives
\[
\sup_{t\geq t_0}\sup_{\pi\in\Pi}|C_t^{(M)}(\pi)|
=O_p\!\left(B_T\sqrt{\frac{\log T}{t_0}}\right).
\]
Thus the late-round supremum is $o_p(1)$ when $B_T^2\log T/t_0\to0$. The factor $K_T$ in $B_T=K_TM_T$ must remain in this condition.

We have $A_t\leq2\{C_t^{(M)}(\pi_t)\}^2+2\{C_t^{(\mathrm{tail})}(\pi_t)\}^2$. Therefore,
\begin{align*}
R_1
&\leq\frac8T\sum_{t=2}^T\{C_t^{(M)}(\pi_t)\}^2
+\frac8T\sum_{t=2}^T\{C_t^{(\mathrm{tail})}(\pi_t)\}^2\\
&\leq\frac8T\sum_{2\leq t<t_0}\{C_t^{(M)}(\pi_t)\}^2
+8\sup_{t\geq t_0,\pi\in\Pi}|C_t^{(M)}(\pi)|^2
+\frac8T\sum_{t=2}^T\{C_t^{(\mathrm{tail})}(\pi_t)\}^2.
\end{align*}
For the first term, the deterministic bound $|C_t^{(M)}|\leq2B_T$ is too crude at the desired overlap rate. The second-moment part of the same maximal inequality instead gives
\begin{align*}
\E\left[\frac1T\sum_{2\leq t<t_0}\{C_t^{(M)}(\pi_t)\}^2\right]
&\leq\frac1T\sum_{2\leq t<t_0}\E\sup_{\pi\in\Pi}|C_t^{(M)}(\pi)|^2\\
&\lesssim\frac{B_T^2}{T}\sum_{n=1}^{t_0-2}\frac1n
\lesssim\frac{B_T^2\log(et_0)}{T}.
\end{align*}
Markov's inequality now proves that the early-round term is $o_p(1)$ when $B_T^2\log T/T\to0$. The second term is controlled by the preceding late-round supremum. It remains to bound the third, tail term.

For the tail term, note that
\[
0\leq\hat\varphi_s^2w_sw_{\pi,s}\mathbf1\{w_s\hat\varphi_s^2(Z_s)>M_T\}
\leq K_Tw_s\hat\varphi_s^2(Z_s)\mathbf1\{w_s\hat\varphi_s^2(Z_s)>M_T\}=: \zeta_s.
\]
Because the increments are nonnegative, with $\mu_s=\E[\zeta_s\mid\mathcal F_{s-1}]$,
\[
|C_t^{(\mathrm{tail})}(\pi_t)|
\leq\frac1{t-1}\sum_{s<t}\zeta_s+\frac1{t-1}\sum_{s<t}\mu_s.
\]
Therefore,
\[
\frac1T\sum_{t=2}^T\{C_t^{(\mathrm{tail})}(\pi_t)\}^2
\leq\frac2T\sum_{t=2}^T\left(\frac1{t-1}\sum_{s<t}\zeta_s\right)^2
+\frac2T\sum_{t=2}^T\left(\frac1{t-1}\sum_{s<t}\mu_s\right)^2.
\]
By change of measure and the fourth moment of the envelope,
\begin{align*}
\mu_s
&\leq\frac{K_T}{M_T}\E[w_s^2\hat\varphi_s^4(Z_s)\mid\mathcal F_{s-1}]\\
&\leq\frac{K_T^2}{M_T}\E_{P_e}[\hat\varphi_s^4]
\leq\frac{L^4K_T^2}{M_T}\E_{P_e}[C^4]=:b_T.
\end{align*}
Consequently the predictable part is bounded by $2b_T^2=O(K_T^4/M_T^2)$.

It remains to control the empirical average. Expanding the square,
\[
\frac1T\sum_{t=2}^T\left(\frac1{t-1}\sum_{s<t}\zeta_s\right)^2
=\frac1T\sum_{t=2}^T\frac1{(t-1)^2}\sum_{s,r<t}\zeta_s\zeta_r.
\]
We first consider the off-diagonal terms. For $s<r$, $\zeta_s$ is $\mathcal F_{r-1}$-measurable, so
\[
\E[\zeta_s\zeta_r]=\E[\zeta_s\mu_r]\leq b_T\E\zeta_s\leq b_T^2.
\]
It follows that
\begin{align*}
\E\left[\frac1T\sum_{t=2}^T\frac1{(t-1)^2}\sum_{\substack{s,r<t\\s\ne r}}\zeta_s\zeta_r\right]
&=\frac2T\sum_{t=2}^T\frac1{(t-1)^2}\sum_{s<r<t}\E[\zeta_s\mu_r]\\
&\leq\frac{b_T^2}{T}\sum_{t=2}^T\frac{(t-1)(t-2)}{(t-1)^2}\\
&\leq b_T^2=O(K_T^4/M_T^2).
\end{align*}
For the diagonal terms, all overlap factors must again be retained:
\[
\zeta_s^2\leq L^4K_T^2w_s^2C(Z_s)^4
\leq L^4K_T^3w_sC(Z_s)^4.
\]
Thus $\E\zeta_s^2\leq L^4K_T^3\E_{P_e}[C^4]$, and
\begin{align*}
\E\left[\frac1T\sum_{t=2}^T\frac1{(t-1)^2}\sum_{s<t}\zeta_s^2\right]
&\leq\frac{L^4K_T^3\E_{P_e}[C^4]}{T}\sum_{t=2}^T\frac1{t-1}\\
&=O\!\left(\frac{K_T^3\log T}{T}\right).
\end{align*}
Combining the diagonal and off-diagonal parts and applying Markov's inequality gives
\[
\frac1T\sum_{t=2}^T\left(\frac1{t-1}\sum_{s<t}\zeta_s\right)^2
=O_p\!\left(\frac{K_T^4}{M_T^2}+\frac{K_T^3\log T}{T}\right).
\]
The same rate, with a different constant, bounds the average squared tail process.

Collecting the rate requirements up to this point, we need
\[
\begin{gathered}
M_T\to\infty,\quad t_0\to\infty,\quad t_0/T\to0,\\
\frac{K_T^2M_T^2\log T}{T}\to0,\quad
\frac{K_T^2M_T^2\log T}{t_0}\to0,\quad
\frac{K_T^4}{M_T^2}\to0,\quad\frac{K_T^3\log T}{T}\to0.
\end{gathered}
\]
For $K_T=O(T^{1/8})$, choose $M_T=T^{5/16}$ and $t_0=\lceil T^{15/16}\rceil$. In the same order, the four nontrivial bounds are
\[
O(T^{-1/8}\log T),\qquad O(T^{-1/16}\log T),\qquad
O(T^{-1/8}),\qquad O(T^{-5/8}\log T),
\]
all tending to zero. Hence $R_1=o_p(1)$ for the stopped process. The original process agrees with it except on an event of asymptotic probability at most $\eta$; letting $\eta\downarrow0$ removes the stopping and proves the claim.
\endgroup

\paragraph{ Showing $R_2 = o_p(1)$}
Note that by a change of measure argument, we can rewrite
\begin{align*}
\bar{\sigma}_{t}^{2}
&= {\frac1{t-1}\sum_{s<t}G_s(\pi_t)} \\
&=
\frac{1}{t-1}\sum_{s=1}^{t-1}
\int_{\mathcal C}
\int_{\mathcal A}
\int_{\mathcal Y}
\frac{\pi_{e}^{2}(a \mid c)}
{\pi_{t}(a \mid c)\pi_{s}(a \mid c)}
\hat{\varphi}_{s}^{2}(c,a,y)
\,p_Y(dy \mid c,a)\,
\pi_s(a \mid c)\,\mu(da \mid c)\,
P_C(dc) \\
&=
\frac{1}{t-1}\sum_{s=1}^{t-1}
\int_{\mathcal C}
\int_{\mathcal A}
\int_{\mathcal Y}
\frac{\pi_{e}^{2}(a \mid c)}
{\pi_{t}(a \mid c)}
\hat{\varphi}_{s}^{2}(c,a,y)
\,p_Y(dy \mid c,a)\,
\mu(da \mid c)\,
P_C(dc) \\
&=
\frac{1}{t-1}\sum_{s=1}^{t-1}
\int_{\mathcal C}
\int_{\mathcal A}
\int_{\mathcal Y}
\left(
\frac{\pi_e(a \mid c)}
{\pi_t(a \mid c)}
\right)^2
\hat{\varphi}_{s}^{2}(c,a,y)
\,p_Y(dy \mid c,a)\,
\pi_t(a \mid c)\,\mu(da \mid c)\,
P_C(dc) \\
&=
\frac{1}{t-1}\sum_{s=1}^{t-1}
\E\left[
w_t^2\hat{\varphi}_{s}^{2}
\,\middle|\, \mathcal F_{t-1}
\right].
\end{align*}
{In the last conditional expectation, the historical fitted function $\hat\varphi_s$ is evaluated at the current observation $Z_t$; its fitted coefficients are already $\mathcal F_{t-1}$-measurable.} Therefore, we have that
\begin{align*}
\frac{1}{T} {\sum_{t=2}^{T}} ({\bar{\sigma}_{t}^{2}} -\E\left[\hat{\varphi}_{t}^{2} w_{t}^{2}|\Filt \right])^{2}
&= \frac{1}{T} {\sum_{t=2}^{T}} \left(\frac{1}{t-1}\sum_{s=1}^{t-1}\E\left[
w_t^2\hat{\varphi}_{s}^{2}
\,\middle|\, \mathcal F_{t-1}
\right] -\E\left[\hat{\varphi}_{t}^{2} w_{t}^{2}|\Filt \right]\right)^{2}\\
&= \frac{1}{T} {\sum_{t=2}^{T}} \left(\frac{1}{t-1}\sum_{s=1}^{t-1}\E\left[
w_t^2(\hat{\varphi}_{s}^{2}-\hat{\varphi}_{t}^{2})
\,\middle|\, \mathcal F_{t-1}
\right]\right)^{2}\\
&\le  \frac{1}{T}{\sum_{t=2}^{T}}  \left(\frac{1}{t-1}\sum_{s=1}^{t-1}{\norm{w_t}_\infty\E_{P_e}[|\hat\varphi_s^2-\hat\varphi_t^2|]}\right)^{2}\\
&\le  \frac{K_{T}^{2}}{T}{\sum_{t=2}^{T}}  \left(\frac{1}{t-1}\sum_{s=1}^{t-1}\E_{P_e}\left[
|\hat{\varphi}_{s}^{2}-\hat{\varphi}_{t}^{2}|
\right]\right)^{2}\\
\end{align*}
By the triangle inequality,
\[
{\E_{P_e}[|\hat\varphi_s^2-\hat\varphi_t^2|]}
\le
\E_{P_e}\big[|\hat\varphi_s^2 - \varphi_{P_e}^2|\big]
+
\E_{P_e}\big[|\hat\varphi_t^2 - \varphi_{P_e}^2|\big].
\]
We now apply Hardy's inequality \citep{Hardy} which we recount here for our reader's convenience. 
\begin{fact}[Hardy's Inequality] \label{fact:hardy} If $\{a_{n}\}_{n\in \mathbb{N}}$ is a sequence of non-negative real numbers, then for every real number $p>1$, 
\[
\sum_{n=1}^{\infty} \left( \frac{a_1 + a_2 + \cdots + a_n}{n} \right)^p
\le
\left( \frac{p}{p - 1} \right)^p \sum_{n=1}^{\infty} a_n^p,
\quad \text{for } p > 1.
\]
\end{fact}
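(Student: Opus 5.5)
We prove the bound for finite partial sums and then let the number of terms tend to infinity; monotone convergence handles the passage to the limit. Fix $N\in\mathbb{N}$ and $p>1$, and set $q:=p/(p-1)$ and $A_n:=(a_1+\cdots+a_n)/n$, with $A_0:=0$. If $\sum_n a_n^p=\infty$ there is nothing to prove, so assume it is finite. The idea is to prove a pointwise inequality that telescopes, and then close the argument with H\"older's inequality.

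\textbf{Step 1 (telescoping bound).} We use the identity $a_n=nA_n-(n-1)A_{n-1}$. It gives
\[
A_n^p-q\,a_nA_n^{p-1}=(1-qn)A_n^p+q(n-1)A_{n-1}A_n^{p-1}.
\]
By Young's inequality with exponents $p$ and $p/(p-1)$, we have $A_{n-1}A_n^{p-1}\le \tfrac1p A_{n-1}^p+\tfrac{p-1}{p}A_n^p$. Substituting, and using $q(p-1)/p=1$ together with $1-qn+(n-1)=n(1-q)=-n/(p-1)$, we obtain
\[
A_n^p-q\,a_nA_n^{p-1}\le \frac{(n-1)A_{n-1}^p-nA_n^p}{p-1}.
\]
Summing over $n=1,\dots,N$, the right-hand side telescopes to $-NA_N^p/(p-1)\le 0$. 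Hence
\[
\sum_{n=1}^N A_n^p\le q\sum_{n=1}^N a_nA_n^{p-1}.
\]

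\textbf{Step 2 (H\"older).} Apply H\"older's inequality with exponents $p$ and $p/(p-1)$ to the right-hand side of Step 1:
\[
\sum_{n\le N}A_n^p\le q\Big(\sum_{n\le N}a_n^p\Big)^{1/p}\Big(\sum_{n\le N}A_n^p\Big)^{(p-1)/p}.
\]
The sum $\sum_{n\le N}A_n^p$ is finite. If it is zero the claim is trivial; otherwise we divide by $(\sum_{n\le N}A_n^p)^{(p-1)/p}$ and raise both sides to the $p$-th power. This gives
\[
\sum_{n\le N}A_n^p\le q^p\sum_{n\le N}a_n^p\le q^p\sum_{n\ge1}a_n^p.
\]

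\textbf{Step 3 (limit).} The left-hand side of Step 2 is nondecreasing in $N$ and bounded uniformly by $q^p\sum_{n\ge1}a_n^p$. Letting $N\to\infty$ gives the stated inequality.

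The only delicate step is the algebra in Step 1, where the Young weights must be chosen so that the coefficient of $A_n^p$ becomes exactly $-n/(p-1)$. This is precisely what makes the sum telescope. Everything after that step is routine.
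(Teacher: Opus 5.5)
Your proof is correct. The pointwise bound $A_n^p-\frac{p}{p-1}a_nA_n^{p-1}\le\frac{(n-1)A_{n-1}^p-nA_n^p}{p-1}$, followed by telescoping and H\"older, is the classical argument due to Elliott and reproduced in Hardy--Littlewood--P\'olya; the paper gives no proof of its own and simply cites that book. Your finite-$N$ bound from Step~2 with $p=2$ (constant $q^p=4$) is exactly the form the paper uses to control $R_2$ in the proof of \cref{thm:var_estimate}, so that application does not need the limit in Step~3.
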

\cref{fact:hardy} implies that
$
{\frac{1}{T}\sum_{t=2}^T}
\left(
\frac{1}{t-1}\sum_{s=1}^{t-1} a_s
\right)^2
\le
\frac{4}{T}\sum_{t=1}^T a_t^2
$. Therefore, applying Cauchy--Schwarz once more

\begingroup
\begin{align*}
&\frac{K_T^2}{T}\sum_{t=2}^T\left\{\frac1{t-1}\sum_{s<t}\E_{P_e}[|\hat\varphi_s^2-\hat\varphi_t^2|]\right\}^2\\
&\leq\frac{K_T^2}{T}\sum_{t=2}^T\left\{\E_{P_e}[|\hat\varphi_t^2-\varphi_{P_e}^2|]
+\frac1{t-1}\sum_{s<t}\E_{P_e}[|\hat\varphi_s^2-\varphi_{P_e}^2|]\right\}^2\\
&\leq\frac{2K_T^2}{T}\sum_{t=2}^T\{\E_{P_e}[|\hat\varphi_t^2-\varphi_{P_e}^2|]\}^2\\
&\quad+\frac{2K_T^2}{T}\sum_{t=2}^T\left\{\frac1{t-1}\sum_{s<t}\E_{P_e}[|\hat\varphi_s^2-\varphi_{P_e}^2|]\right\}^2\\
&\leq\frac{2K_T^2+8K_T^2}{T}\sum_{t=1}^T\{\E_{P_e}[|\hat\varphi_t^2-\varphi_{P_e}^2|]\}^2\\
&=\frac{10K_T^2}{T}\sum_{t=1}^T\{\E_{P_e}[|\hat\varphi_t^2-\varphi_{P_e}^2|]\}^2\\
&\leq10K_T^2\sup_{t\leq T}\E_{P_e}[(\hat\varphi_t+\varphi_{P_e})^2]
\cdot\frac1T\sum_{t=1}^T\E_{P_e}[(\hat\varphi_t-\varphi_{P_e})^2].
\end{align*}
\endgroup

Since $K_{T} = O(T^{1/8})$ it suffices to show that
$\frac{1}{T}\sum_{t=1}^T
\E_{P_e}\left[(\hat\varphi_t-\varphi_t)^2\right]
{=o_p(T^{-1/4})}$, and $\sup_{t\le T}
\E_{P_e}\!\left[(\hat\varphi_t+\varphi_t)^2\right]
=O_p(1)
$ in order to show the entire term is $o_p(1)$. However,  \cref{lemma:epe_f} combined with the requirement that ${\frac1T\sum_{t=1}^T(\hat\Psi_t-\Psi_e(\theta_{P_e}))^2} =o_p(T^{-1/2})$ immediately yield the stronger claim that $\frac{1}{T}\sum_{t=1}^T
\E_{P_e}\left[(\hat\varphi_t-\varphi_t)^2\right]
=o_p(T^{-1/2})$.
\iffalse
For the first display, by local smoothness of the influence function,
\[
|\hat\varphi_t(z)-\varphi_{P_e}(z)|
\lesssim
C(z)\left(
\|\hat\theta_t-\theta_{P_e}\|_\Theta
+
\|\hat\eta_t-\eta_{P_e}\|_\Upsilon
+
\|\hat\alpha_t-\alpha_{P_e}\|_\Theta
\right).
\]
Therefore,
\[
\E_{P_e}\!\left[(\hat\varphi_t-\varphi_{P_e})^2\right]
\lesssim
\|\hat\theta_t-\theta_{P_e}\|_\Theta^2
+
\|\hat\eta_t-\eta_{P_e}\|_\Upsilon^2
+
\|\hat\alpha_t-\alpha_{P_e}\|_\Theta^2,
\]
since \(\E{P_e}[C(Z)^2]<\infty\). Averaging over \(t\) and applying Cauchy--Schwarz,
\begin{align*}
\frac{1}{T}\sum_{t=1}^T
\E_{P_e}\!\left[(\hat\varphi_t-\varphi_t)^2\right]
&\lesssim
\left(
\frac{1}{T}\sum_{t=1}^T
\|\hat\theta_t-\theta_{P_e}\|_\Theta^4
\right)^{1/2}
+
\left(
\frac{1}{T}\sum_{t=1}^T
\|\hat\eta_t-\eta_{P_e}\|_\Upsilon^4
\right)^{1/2}
+
\left(
\frac{1}{T}\sum_{t=1}^T
\|\hat\alpha_t-\alpha_{P_e}\|_\Theta^4
\right)^{1/2}\\
&=o_p(1).
\end{align*}
\fi 
For the second term, note that by Cauchy-Schwarz and Jensen's inequality that, 
\begin{align*}
\sup_{t\in[T]} \E_{P_e}\left[ (\hat\varphi_t + \varphi_t)^{2}\right]&\le \left(\sup_{t\in[T]} \E_{P_e}\left[ (\hat\varphi_t + \varphi_t)^{4} \right]\right)^{1/2}
 \le \left(\sup_{t\in[T]} 8\E_{P_e}\left[\hat\varphi_t^{4}\right] + \sup_{t\in[T]} 8\E_{P_e}\left[\varphi_t^{4}\right]\right)^{1/2}.
\end{align*}
We can then invoke \cref{lemma:fourth_IF} to show $\sup_{t\in[T]}\E_{P_e}\left[\hat\varphi_t^{4}\right] =O_p(1)$ and $\sup_{t\in[T]}\E_{P_e}\left[\varphi_t^{4}\right]  =O(1)$.

\paragraph{ Showing $R_3 = o_p(1)$}
Using $\hat\varphi_t^2-\varphi_t^2
=
(\hat\varphi_t-\varphi_t)(\hat\varphi_t+\varphi_t)$ and Cauchy--Schwarz inequality,
\begin{align*}
\frac{2}{T}\sum_{t=1}^T
\left(
\E\left[
w_t^2(\hat\varphi_t-\varphi_t)(\hat\varphi_t+\varphi_t)
\mid \mathcal F_{t-1}
\right]
\right)^2 
&\le
\frac{2}{T}\sum_{t=1}^T
\E\left[
w_t^2(\hat\varphi_t-\varphi_t)^2
\mid \mathcal F_{t-1}
\right]
\E\left[
w_t^2(\hat\varphi_t+\varphi_t)^2
\mid \mathcal F_{t-1}
\right]\\
&\le {\frac{2K_T^2}{T}\sum_{t=1}^T} \E_{P_e}\left[\left(\hat\varphi_t -  \varphi_{t} \right)^{2}\right]\E_{P_e}\left[(\hat\varphi_t +  \varphi_{t})^{2} \right]
\end{align*}
The remaining argument is the same as what was shown in demonstrating $R_{2} = o_p(1)$

On the event that $\min_{t\in [T]}\hat{\sigma}_{t}^{2} >c$, observe that
\[
\frac{\sigma_t^2}{\hat\sigma_t^2}-1
=
\frac{\sigma_t^2-\hat\sigma_t^2}{\hat\sigma_t^2}.
\]
Hence, on the event $\min_{t\in[T]}\hat\sigma_t^2\ge c$,
\[
\left|
\frac1T\sum_{t=1}^T \left(\frac{\sigma_t^2}{\hat\sigma_t^2}-1\right)
\right|
\le
\frac{1}{c}\cdot \frac1T\sum_{t=1}^T |\hat\sigma_t^2-\sigma_t^2|.\]
Applying Cauchy--Schwarz gives
\[
\frac1T\sum_{t=1}^T |\hat\sigma_t^2-\sigma_t^2|
\le
\left(\frac1T\sum_{t=1}^T (\hat\sigma_t^2-\sigma_t^2)^2\right)^{1/2}.
\]
Therefore,
\[
\left|
\frac1T\sum_{t=1}^T \left(\frac{\sigma_t^2}{\hat\sigma_t^2}-1\right)
\right|
\le
\frac{1}{c}
\left(\frac1T\sum_{t=1}^T (\hat\sigma_t^2-\sigma_t^2)^2\right)^{1/2}
=o_p(1),
\]

\end{proof}

\section{Auxiliary Lemmas}
\iffalse
\begin{proposition} \label{fact:sigma}
Suppose there exist a constant $c >0$ such that $c \le \min_{t\in[T]} \sigma_t^2$ and $c \le \min_{t\in[T]} \hat{\sigma}_t^2$.

If 
\[
\frac1T\sum_{t=1}^T (\hat\sigma_t^2-\sigma_t^2)^2 = o_p(1),
\]
then
\[
\frac1T\sum_{t=1}^T \left(\frac{\sigma_t^2}{\hat\sigma_t^2}-1\right)=o_p(1).
\]
\end{proposition}

\begin{proof}
Observe that
\[
\frac{\sigma_t^2}{\hat\sigma_t^2}-1
=
\frac{\sigma_t^2-\hat\sigma_t^2}{\hat\sigma_t^2}.
\]
Hence, on the event $\min_{t\in[T]}\hat\sigma_t^2\ge c$,
\[
\left|
\frac1T\sum_{t=1}^T \left(\frac{\sigma_t^2}{\hat\sigma_t^2}-1\right)
\right|
\le
\frac{1}{c}\cdot \frac1T\sum_{t=1}^T |\hat\sigma_t^2-\sigma_t^2|.\]
Applying Cauchy--Schwarz gives
\[
\frac1T\sum_{t=1}^T |\hat\sigma_t^2-\sigma_t^2|
\le
\left(\frac1T\sum_{t=1}^T (\hat\sigma_t^2-\sigma_t^2)^2\right)^{1/2}.
\]
Therefore,
\[
\left|
\frac1T\sum_{t=1}^T \left(\frac{\sigma_t^2}{\hat\sigma_t^2}-1\right)
\right|
\le
\frac{1}{c}
\left(\frac1T\sum_{t=1}^T (\hat\sigma_t^2-\sigma_t^2)^2\right)^{1/2}
=o_p(1),
\]
which proves the claim.
\end{proof}
\fi

\begin{fact}[Theorem 6.6 of \cite{BIC_regret}] \label{fact:EMI}
Let $\Pi$ satisfy
\[
\log N(\epsilon,\log\Pi,\|\cdot\|_\infty)
\lesssim 
\begin{cases}
\log(e/\epsilon), & p=0,\\
\epsilon^{-p}, & p\in(0,2).
\end{cases}
\]
For each $s\le t$, let $g_s:\mathcal Z\to[0,B]$ be $\mathcal F_{s-1}$-measurable, and define
\[
f_{\pi,s}(Z_s)
=
\frac{g_s(Z_s)}
{\pi(A_s\mid C_s)}.
\]
Then there exists a universal constant $C>0$ such that, for any $t\ge 2$ and $\delta\in(0,1)$, with probability at least $1-\delta$,
\[
\sup_{\pi\in\Pi}
\left|
\frac1t\sum_{s=1}^t
\{f_{\pi,s}(Z_s)-\mathbb E[f_{\pi,s}(Z_s)\mid\mathcal F_{s-1}]\}
\right|
\le
C\left[
r_{B,t}^2
+
\tilde\rho_t(\pi)\, t^{-1/2}
+
\hat\rho_t(\pi)\sqrt{\frac{\log(et/\delta)}{t}}
+
B\frac{\log(et/\delta)}{t}
\right],
\]
uniformly over $\pi\in\Pi$, where
\[
\tilde\rho_t(\pi):=
\begin{cases}
\left(\frac{1}{t}\sum_{s=1}^t f_{\pi,s}(Z_s)^2\right)^{\frac{1-p/2}{2}} & \text{ when }  p \in (0,2),\\[10pt]
\left(\frac{1}{t}\sum_{s=1}^t f_{\pi,s}(Z_s)^2\right)^{1/2}\sqrt{\log t} & \text{ when }   p = 0.
\end{cases}
\]and $$r_{B,t}
\asymp
\begin{cases}
\left(\dfrac{B^{2-p}}{t}\right)^{\frac{2}{2+p}}  & \text{ when } p \in (0,2),\\[10pt]
B\sqrt{\dfrac{\log t}{t}} & \text{ when } p = 0.
\end{cases}$$

\end{fact}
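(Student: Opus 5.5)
The approach is a chaining argument over a sup-norm cover of $\log\Pi$, with Freedman's martingale Bernstein inequality applied at every link. The structural observation that makes this work is the multiplicative Lipschitz property used in the proof of \cref{thm:var_estimate}. If $\|\log\pi-\log\pi'\|_\infty\le\epsilon\le1$, then
\[
|f_{\pi,s}-f_{\pi',s}|=f_{\pi,s}\,\bigl|1-e^{\log\pi-\log\pi'}\bigr|\le e\,\epsilon\, f_{\pi,s}
\]
pointwise. So an $\epsilon$-cover of $\log\Pi$ in $\|\cdot\|_\infty$ induces a cover of $\{f_{\pi,\cdot}\}$ in a \emph{relative} sense. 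This has two consequences: the link differences are bounded by $eB\epsilon$, and their conditional and empirical second moments are bounded by $e^2\epsilon^2$ times those of $f_\pi$ itself. Pointwise separability of $\Pi$ lets us reduce the supremum to a countable one, so measurability is not an issue.

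\textbf{Step 1: single-function bound.} For a fixed $\pi$, the centered summands form a martingale difference sequence bounded by $B$. Freedman's inequality, together with a peeling argument over dyadic values of the predictable variance, gives a deviation bound of order $\sqrt{V\log(e t/\delta)/t}+B\log(et/\delta)/t$, where $V$ is the predictable quadratic variation. I then replace $V$ by the empirical second moment $t^{-1}\sum_s f_{\pi,s}^2$. This is done by applying the same inequality to the nonnegative increments $f_{\pi,s}^2$, which are bounded by $B^2$, and solving the resulting quadratic inequality. This step produces the $\hat\rho_t(\pi)\sqrt{\log(et/\delta)/t}$ and $B\log(et/\delta)/t$ terms.

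\textbf{Step 2: chaining.} Take dyadic scales $\epsilon_k=2^{-k}$ with nested covers of $\log\Pi$ of cardinality $N_k$, and write $f_\pi=f_{\pi_0}+\sum_k (f_{\pi_k}-f_{\pi_{k-1}})$. At level $k$ the link increments have range at most $3eB\epsilon_k$ and relative variance at most $\epsilon_k^2$ times the empirical second moment of $f_\pi$, up to constants. I apply Step 1 at each level with confidence $\delta_k=\delta/(2^k N_kN_{k-1})$ and take a union bound. Summing over levels:
\begin{itemize}
\item The variance parts give $t^{-1/2}\,\hat\rho\sum_k\epsilon_k\sqrt{\log N_k}$. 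This sum is finite because $p<2$, and for $p>0$ it is optimized by stopping the chain at a scale tied to the empirical second moment; that stopping is what produces the exponent $(1-p/2)/2$ in $\tilde\rho_t$. For $p=0$ it gives the $\sqrt{\log t}$ factor.
\item The range parts give $B\sum_k\epsilon_k\log N_k/t$. This is finite only after truncating the chain at a scale $\epsilon^\ast$.
\end{itemize}

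\textbf{Step 3: truncation and localization (the main obstacle).} The $p>0$ tail of the chain must be cut at $\epsilon^\ast$, and the choice of $\epsilon^\ast$ must balance the Bernstein range term $B(\epsilon^\ast)^{1-p}/t$ against the residual $\epsilon^\ast$ approximation error. The residual is itself controlled by the relative Lipschitz bound, since it is at most $e\epsilon^\ast$ times the empirical and compensator means of $f_\pi$. That mean, in turn, is bounded by the same supremum we are trying to control plus a variance term. Solving this self-referential inequality gives the critical radius $r_{B,t}^2=(B^{2-p}/t)^{2/(2+p)}$. I expect this fixed-point localization, done uniformly in $\pi$ with empirical rather than predictable variances, to be the delicate part. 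My first attempt would be a peeling argument over shells $\{\pi:\hat\rho_t(\pi)^2\in(2^{-j-1},2^{-j}]\}$, with the union bound spread across shells, so that the data-dependent quantities $\tilde\rho_t(\pi)$ and $\hat\rho_t(\pi)$ may legitimately appear on the right-hand side.
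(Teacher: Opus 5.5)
The paper gives no argument of its own. Its proof is a one-line appeal to Theorem~6.6 of \cite{BIC_regret}, so a self-contained chaining proof is a genuinely different route. Your Steps~1--2 are sound, and they give more than you claim. With the relative cover, the variance part of the chain is at most a constant times $\hat\rho_t(\pi)\,t^{-1/2}\sum_k\epsilon_k\sqrt{\log N_k+k+\log(et/\delta)}\le C_p\,\hat\rho_t(\pi)\sqrt{\log(et/\delta)/t}$. That is already one of the terms on the right. So no early stopping is involved, and no extra $\sqrt{\log t}$ arises at $p=0$. The exponent $(1-p/2)/2$ is what an absolute-scale cover would produce, not your relative one. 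For $p<1$ the range sum $Bt^{-1}\sum_k\epsilon_k(\log N_k+k+\log(et/\delta))$ converges. At $p=1$, cutting at $\epsilon^\ast\asymp t^{-1/2}$ costs only $O(B\log t/t)$. So for $p\le 1$ you obtain $C_p[\hat\rho_t(\pi)\sqrt{\log(et/\delta)/t}+B\log(et/\delta)/t]$, which is stronger than the claim. Two genuine gaps remain.

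\textbf{Boundedness.} Freedman's range condition and your link bound $3eB\epsilon_k$ both use $f_{\pi,s}\le B$. The hypothesis only bounds $g_s$, and $g_s/\pi$ is unbounded unless $\Pi$ is bounded below. Without such a bound the inequality is false. Take $\Pi=\{\pi\}$, no contexts, $\mathcal A=\{0,1\}$, $\pi(1)=t^{-3}$, sampling policies with $\pi_s(1)=t^{-2}$, and $g_s\equiv 1$. With probability at least $1-1/t$ action $1$ is never drawn, so $\hat\rho_t(\pi)\approx 1$. Each compensator, however, contains $\pi_s(1)/\pi(1)=t$. The left side is then about $t$, while the right side is $O(\sqrt{\log(et/\delta)/t}+\log(et/\delta)/t)$. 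You must assume $\sup_{\pi\in\Pi}f_{\pi,s}\le B$. This is how the paper actually uses the fact in the proof of \cref{thm:var_estimate}, where $B_T=K_TM_T$ absorbs the overlap bound.

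\textbf{The case $p\in(1,2)$.} Here Step~3 cannot work as sketched. Cutting at $\epsilon^\ast$ leaves a residual at most $e\epsilon^\ast$ times the sum of the empirical and compensator means of $f_\pi$. The first mean is $\le\hat\rho_t(\pi)$, and the second is $\le\hat\rho_t(\pi)$ plus the left side. Your ``self-referential'' inequality is therefore linear with coefficient $e\epsilon^\ast<1$, and solving it costs only a factor $2$. The residual is simply of order $\epsilon^\ast\hat\rho_t(\pi)$, and no critical radius appears. Your union bound already covers every level, so $\epsilon^\ast$ can be chosen per $\pi$ at no cost, with no peeling needed. Beyond terms already on the right, the best the cut chain yields is $\min_\epsilon\{\epsilon\hat\rho_t(\pi)+B\epsilon^{1-p}/t\}\asymp\hat\rho_t(\pi)^{1-1/p}(B/t)^{1/p}$. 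This is not dominated by the right side. Take $p=3/2$, $B=\sqrt t$, $\hat\rho_t(\pi)\asymp 1$ and fixed $\delta$. Then it is of order $t^{-1/3}$, whereas every term on the right is $O(t^{-3/7}\log t)$. This holds even under your reading $r_{B,t}^2=(B^{2-p}/t)^{2/(2+p)}$; as displayed, that formula defines $r_{B,t}$ itself.

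The missing idea is truncation at every level rather than a single cut. This is Ossiander's Bernstein chaining, the device behind the bracketing maximal inequalities in Chapter~19 of \cite{van2000asymptotic}, with Freedman in place of Bernstein. The substitution is legitimate because your brackets $(e^{\epsilon_k}-e^{-\epsilon_k})f_{\pi_k}$ are $\mathcal F_{s-1}$-predictable functions. On a variance shell $\rho$, keep the level-$k$ link only where the earlier brackets stay below thresholds $a_j\asymp\epsilon_j\rho\sqrt{t/\log N_{j+1}}$. Charge the exceedances to the compensator through $\E[\Delta\mathbf 1\{\Delta>a\}\mid\mathcal F_{s-1}]\le a^{-1}\E[\Delta^2\mid\mathcal F_{s-1}]$. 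Each level's range term then matches its variance term. The range part therefore also sums like $\sum_k\epsilon_k\sqrt{\log N_k}<\infty$ for every $p<2$, and the single-cut residual is no longer needed.
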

\begin{proof}
This is a direct application of Theorem 6.6 of \cite{BIC_regret} with only minor notation changes to match our setting. 
\end{proof}

\begin{lemma} \label{prop:mds_clt} Let $\sigma_t$ be defined as in \cref{eqn:cond_var}. Assume that the following conditions hold. 
\begin{enumerate}[label=(\alph*)] 
\item A sequence of estimators $\{\hat{\sigma}_{t}\}_{t=1}^{T}$ adapted to $\mathcal{F}_{t-1}$ can be constructed such that there exists a constant $c>0$ such that $\min_{t\in [T]}\hat{\sigma}_{t}^{2} > c$ almost surely and $\frac{1}{T}\sum_{t=1}^{T}  \sigma_{t}^{2}/\hat{\sigma_{t}}^{2} -1 =o_{p}(1)$.\label{cond:estimator}
\item For every $\varepsilon > 0 $,
\begin{equation}  \label{eqn:lindeberg}
\frac{1}{T} \sum_{t=1}^T
\E\!\left[w_{t}^{2}\varphi_t^2\mathbf{1}\{|w_{t}\varphi_t| > \varepsilon \sqrt{T}\}
\mid \mathcal{F}_{t-1}\right] \xrightarrow{p} 0.
\end{equation}
\end{enumerate}
Then under Assumptions~\ref{assumption:distribution}-\ref{assumption:densities}, 
\begin{align}\label{eqn:martingale_CLT}
    \frac{1}{\sqrt{T}} \sum_{t=1}^{T} \hat{\sigma}_{t}^{-1}w_{t} \varphi_{t} \overset{d}{\to} N(0,1).
\end{align}
\end{lemma}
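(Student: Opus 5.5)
The plan is to reduce the statement to a standard martingale central limit theorem for triangular arrays, such as Corollary 3.1 of \cite{hall2014martingale}. The array is
\[
\xi_{T,t} := T^{-1/2}\hat\sigma_t^{-1} w_t\varphi_t .
\]
Since $\hat\sigma_t$ is $\mathcal F_{t-1}$-measurable and $\E[w_t\varphi_t\mid\mathcal F_{t-1}]=\E_{P_e}[\varphi_{P_e}]=0$ by the change-of-measure identity, we get $\E[\xi_{T,t}\mid\mathcal F_{t-1}]=0$. So $\{\xi_{T,t}\}$ is a martingale difference array adapted to $\{\mathcal F_{t}\}$, and Assumptions~\ref{assumption:distribution}--\ref{assumption:densities} are exactly what justify that identity. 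Two conditions remain: the conditional variance condition $\sum_t \E[\xi_{T,t}^2\mid\mathcal F_{t-1}]\pto 1$, and a conditional Lindeberg condition. The filtration is nested in the trivial way (the same $\mathcal F_t$ for every $T$), so Hall--Heyde's nesting requirement holds. Because the limiting variance is the constant $1$, no stable convergence or mixing argument is needed.

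For the conditional variance, compute
\[
\sum_{t=1}^T\E[\xi_{T,t}^2\mid\mathcal F_{t-1}] = \frac1T\sum_{t=1}^T \hat\sigma_t^{-2}\E[w_t^2\varphi_t^2\mid\mathcal F_{t-1}] = \frac1T\sum_{t=1}^T \sigma_t^2/\hat\sigma_t^2 .
\]
By condition~\ref{cond:estimator} this is $1+o_p(1)$. This step is immediate, and it is precisely why the estimator is designed to self-normalize.

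For the Lindeberg condition, fix $\varepsilon>0$ and use $\hat\sigma_t^{2}>c$ almost surely. On the indicator event, $|\xi_{T,t}|>\varepsilon$ implies $|w_t\varphi_t|>\varepsilon\sqrt{c}\sqrt{T}$. Hence
\[
\sum_t\E[\xi_{T,t}^2\mathbf 1\{|\xi_{T,t}|>\varepsilon\}\mid\mathcal F_{t-1}] \le \frac{1}{cT}\sum_t \E\big[w_t^2\varphi_t^2\mathbf 1\{|w_t\varphi_t|>\varepsilon\sqrt c\sqrt T\}\mid\mathcal F_{t-1}\big].
\]
This tends to $0$ in probability by \eqref{eqn:lindeberg} applied with $\varepsilon\sqrt c$ in place of $\varepsilon$. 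The Hall--Heyde theorem then gives $\sum_t\xi_{T,t}\dto N(0,1)$, which is \eqref{eqn:martingale_CLT}.

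The only delicate point is transferring the Lindeberg condition through the random normalization $\hat\sigma_t^{-1}$. The almost-sure lower bound in \ref{cond:estimator} is what makes this a one-line domination; without it, the argument would need truncation on the event where the $\hat\sigma_t$ are small. I expect no other obstacle. If one prefers the Dvoretzky form of the theorem, which uses convergence in probability of the conditional variance, the same two verifications suffice verbatim.
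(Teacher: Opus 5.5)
Your proposal is correct and follows the same route as the paper. Both arguments show the martingale-difference property from the $\mathcal F_{t-1}$-measurability of $\hat\sigma_t$ and the change of measure, identify the conditional variance with $T^{-1}\sum_t\sigma_t^2/\hat\sigma_t^2$ via condition (a), transfer the conditional Lindeberg condition through the lower bound $\hat\sigma_t^2>c$, and then invoke the Hall--Heyde martingale CLT. In the Lindeberg step you replace the threshold by $\varepsilon\sqrt{c}\sqrt{T}$ and use the constant $1/c$, which is slightly more careful than the paper's version with threshold $\varepsilon\sqrt{T}$ and constant $c^{-2}$.
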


\begin{proof}
Define $\zeta_{t} := \hat{\sigma}_{t}^{-1}w_{t} \varphi_{t}$. By Theorem 3.4 of \cite{hall2014martingale}, we need to demonstrate three conditions:
\begin{enumerate}[label=(\alph*)]
    \item \textbf{Conditional Expectation} $\E[\zeta_{t} | \mathcal{F}_{t-1}] = 0$ for all $t\in [T]$;
    \item \textbf{Conditional Variance} $\frac{1}{T} \sum_{t=1}^{T}\E[\zeta_{t}^{2} | \mathcal{F}_{t-1}] = 1$
    \item \textbf{Conditional Lindeberg} $\frac{1}{T};\sum_{t=1}^{T}\E[\zeta_{t}^{2} \mathbf{1}\!\left\{|\zeta_{t}> \varepsilon \sqrt{T}\right\} | \mathcal{F}_{t-1}] =  o_p(1)$.
\end{enumerate}
The first condition follows from the fact that $\hat{\sigma_{t}}$ is $\mathcal{F}_{t-1}$ measurable so 
\begin{align*}
    E[\hat{\sigma}^{-1}w_{t}\varphi_t \mid \mathcal{F}_{t-1}] 
   =  \hat{\sigma}^{-1}E[w_{t}\varphi_t \mid \mathcal{F}_{t-1}] 
    =\hat{\sigma}^{-1} E_{P_e}[\varphi_{P_e}(Z)] 
    = 0.
\end{align*}
For the second condition, we have
\begin{align*}
    \frac{1}{T} \sum_{t=1}^{T}\E[\zeta_{t}^{2}\mid \mathcal{F}_{t-1}] =\frac{1}{T} \sum_{t=1}^{T}\E[\hat{\sigma}^{-2}w^{2}_{t}\varphi_t \mid \mathcal{F}_{t-1}] 
    =\frac{1}{T} \sum_{t=1}^{T} \frac{\sigma_{t}^{2}}{\hat\sigma_{t}^{2}} = 1 + o_{p}(1).
\end{align*}
For the third condition, we have a constant $c>0$ such that 
$\zeta_{t}^{2}  = \hat{\sigma}_{t}^{-2}w_{t}^{2} \varphi_{t}^{2} \le \frac{w_t^2}{c^2}\varphi_{t}^{2}$.  We therefore have for some $\varepsilon >0$ that, 

$$\frac{1}{T} \sum_{t=1}^T 
\mathbb{E}\!\left[
\zeta_t^2 \mathbf{1}\{|\zeta_t| > \varepsilon \sqrt{T}\}
\,\middle|\, \mathcal{F}_{t-1}
\right]
\;\le\;
c^{-2} \cdot \frac{1}{T} \sum_{t=1}^T 
\mathbb{E}\!\left[
w_t^{2}\varphi_t^2 \mathbf{1}\left\{|w_t\varphi_t| > \varepsilon\sqrt{T}\right\}
\,\middle|\, \mathcal{F}_{t-1}
\right] = o_p(1).$$

\end{proof}

\begin{lemma}[Self-normalized CLT for plug-in quantities]
\label{prop:self_normalized_plugin}
Let $\{D_{t}\}_{t=1}$ be a martingale difference sequence and $\hat{D}_{t}$ be a $\mathcal{F}_{t-1}$ measurable estimate of $D_{t}$. Define 
\[
S_T := \frac{1}{T}\sum_{t=1}^T D_t, \quad 
V_T := \frac{1}{T}\sum_{t=1}^T \E[D_t^2 \mid \mathcal F_{t-1}], \quad
\hat V_T := \frac{1}{T}\sum_{t=1}^T \hat D_t^2.
\]
Let $\hat{S}_{T}$ be a $\mathcal{F}_{t-1}$ measurable estimator of $S_T$. Suppose that the following conditions hold,  
\begin{enumerate}[label=(\alph*)]
\item (Conditional Variance) $V_T \overset{p}{\to} \kappa$ to some non-zero random variable $\kappa$ which is finite almost surely; \label{cond:cond_var_random}
\item (Realized Variance) $(TV_{T})^{-1}\sum_{t=1}^{T} D_{t}^{2} \overset{p}{\to} 1 $; \label{cond:self_relized_variance}
\item (Conditional Lindeberg)  For every $\varepsilon > 0$; \label{cond:self_relized_lindeberg}
$
\frac{1}{T}\sum_{t=1}^T
\E\!\left[
D_t^2 \mathbf{1}\{|D_t|>\varepsilon\sqrt{T}\}
\,\middle|\, \mathcal F_{t-1}
\right]
\to_p 0$;  
\item (Plug-In Rates) $\hat S_T - S_T = o_p(T^{-1/2})$, and $\frac{1}{T}\sum_{t=1}^T (\hat D_t - D_t)^2 = o_p(1)$. \label{cond:plugin_average}
\end{enumerate}

Then $\sqrt{T}\frac{\hat S_T}{\sqrt{\hat V_T}}
\overset{d}{\to}  N(0,1)$.
\end{lemma}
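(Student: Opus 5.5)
The plan is to reduce the self-normalized statistic to a martingale CLT with random (stable) limiting variance. We then show that the plug-in perturbations in numerator and denominator are negligible. Write $S_T = T^{-1}\sum_t D_t$ and $V_T = T^{-1}\sum_t \E[D_t^2\mid\mathcal F_{t-1}]$. The target is
\[
\sqrt T\,\frac{\hat S_T}{\sqrt{\hat V_T}}=\frac{\sqrt T S_T/\sqrt{V_T}+\sqrt T(\hat S_T-S_T)/\sqrt{V_T}}{\sqrt{\hat V_T/V_T}}.
\]
Since $\kappa>0$ almost surely, $V_T^{-1/2}=O_p(1)$, and by the first plug-in rate the second numerator term is $o_p(1)$. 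By Slutsky it then suffices to show two things: (i) $\sqrt T S_T/\sqrt{V_T}\dto N(0,1)$, and (ii) $\hat V_T/V_T\pto 1$.

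For (i), I would invoke a stable martingale CLT with a random normalizer, e.g. Theorem 3.2 and Corollary 3.1 of \cite{hall2014martingale}. Condition (c) together with (a) gives, for the array $X_{T,t}=D_t/\sqrt T$, conditional Lindeberg and $\sum_t \E[X_{T,t}^2\mid\mathcal F_{t-1}]\pto\kappa$. The result is $\sqrt T S_T\to\kappa^{1/2}N$ stably, with $N\sim N(0,1)$ independent of $\kappa$. Stable convergence together with $V_T\pto\kappa$ then yields the joint convergence $(\sqrt T S_T,V_T)\to(\kappa^{1/2}N,\kappa)$, so $\sqrt T S_T/\sqrt{V_T}\dto N$. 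The main obstacle is this step. Hall--Heyde's theorem requires the limiting variance to be measurable with respect to the limiting $\sigma$-field, typically via a nesting condition $\mathcal F_{T,t}\subseteq\mathcal F_{T+1,t}$. Here the filtration is a single fixed sequence $\mathcal F_{t}$ that does not depend on $T$, so nesting holds automatically. I would state this explicitly, and note that one can equivalently pass to the realized-variance form using (b) and Theorem 3.2's alternative $\sum X_{T,t}^2\pto\kappa$.

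For (ii), I would write
\[
\hat V_T=\frac1T\sum_t D_t^2+\frac2T\sum_t D_t(\hat D_t-D_t)+\frac1T\sum_t(\hat D_t-D_t)^2 .
\]
By (b) and (a), the first term equals $V_T(1+o_p(1))$ and is $O_p(1)$. The last term is $o_p(1)$ by the second plug-in rate. The cross term is bounded by Cauchy--Schwarz as $2\{T^{-1}\sum D_t^2\}^{1/2}\{T^{-1}\sum(\hat D_t-D_t)^2\}^{1/2}=O_p(1)o_p(1)$. Dividing by $V_T$, whose reciprocal is bounded in probability since $\kappa>0$, gives $\hat V_T/V_T\pto1$. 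Combining (i), (ii) and Slutsky proves the lemma. Note that measurability of $\hat D_t$ is not actually needed in this argument; only the stated rates are used.
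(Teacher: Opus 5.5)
Your proposal is correct and follows essentially the same route as the paper: Hall--Heyde's stable martingale CLT for $D_t/\sqrt{T}$ (conditional Lindeberg plus $V_T\pto\kappa$), then joint convergence via stability and the continuous mapping theorem, then the expansion $\hat V_T=\frac1T\sum_t D_t^2+\frac2T\sum_t D_t(\hat D_t-D_t)+\frac1T\sum_t(\hat D_t-D_t)^2$ with Cauchy--Schwarz and Slutsky. The differences are cosmetic: you self-normalize by $V_T$ where the paper uses the realized variance $R_T=\frac1T\sum_t D_t^2$, and you state explicitly that the nesting condition holds trivially for a fixed filtration, which the paper leaves implicit.
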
 
\begin{proof}
Let $R_T := \frac{1}{T}\sum_{t=1}^T D_t^2$. By assumption, $\frac{R_T}{V_T} \pto 1$. 
Since condition (a) states that \(V_T \to_p \kappa\), with \(\kappa\) almost surely finite $R_{T} \pto \kappa $. Now define the martingale difference array $X_{T,t} := \frac{D_t}{\sqrt{T}}$. Note that 
\[
\sum_{t=1}^T \E[X_{T,t}^2 \mid \mathcal{F}_{t-1}]
=
\frac{1}{T}\sum_{t=1}^T \E[D_t^2 \mid \mathcal{F}_{t-1}]
=
V_T \pto \kappa.
\]

Moreover, the conditional Lindeberg assumption ensure that 
\[
\sum_{t=1}^T \E\!\left[X_{T,t}^2 \mathbf{1}\{|X_{T,t}| > \varepsilon\} \mid \mathcal{F}_{t-1}\right]
=
\frac{1}{T}\sum_{t=1}^T \E\!\left[D_t^2 \mathbf{1}\{|D_t| > \varepsilon \sqrt{T}\} \mid \mathcal{F}_{t-1}\right]
\pto 0.
\]

Hence, by Theorem 3.2 of \cite{hall2014martingale} we have that,
\[
\frac{1}{\sqrt{T}}\sum_{t=1}^T D_t \overset{st}{\rightarrow} \sqrt{\kappa}\, Z,
\]
where \(Z \sim N(0,1)\) and $\overset{st}{\rightarrow}$ denotes stable convergence. In particular, conditional on \(\kappa\), the limit is Gaussian with variance \(\kappa\), so the unconditional characteristic function is$\E\!\left[\exp\!\left(-\tfrac{1}{2}\kappa u^2\right)\right]$, corresponding to a mixture of centered Gaussian distributions.

Since $R_T \pto \kappa$, stable convergence and convergence in probability imply joint convergence:
$$
\left(\frac{1}{\sqrt{T}}\sum_{t=1}^T D_t,\; R_T\right)
\dto
(\sqrt{\kappa}\, Z,\; \kappa).
$$
Because \(\kappa > 0\) almost surely, the continuous mapping theorem yields
\[
\sqrt{T}\,\frac{S_T}{\sqrt{R_T}}
=
\frac{T^{-1/2}\sum_{t=1}^T D_t}{\sqrt{R_T}}
\dto
\frac{\sqrt{\kappa}\, Z}{\sqrt{\kappa}}
=
Z \sim N(0,1).
\]

It remains to replace \((S_T, R_T)\) with \((\hat{S}_T, \hat{V}_T)\). By condition~\ref{cond:plugin_average}, we have that$ \sqrt{T}\,(\hat{S}_T - S_T) \pto 0$. Next, observe that
\[
\hat{V}_T - R_T
=
\frac{1}{T}\sum_{t=1}^T (\hat{D}_t^2 - D_t^2)
=
\frac{1}{T}\sum_{t=1}^T (\hat{D}_t - D_t)^2
+
\frac{2}{T}\sum_{t=1}^T D_t(\hat{D}_t - D_t).
\]
The first term is \(o_p(1)\) by condition~\ref{cond:plugin_average}. For the second term, by Cauchy--Schwarz,
\[
\frac{1}{T}\left|\sum_{t=1}^T D_t(\hat{D}_t - D_t)\right|
\le
\left(\frac{1}{T}\sum_{t=1}^T D_t^2\right)^{1/2}
\left(\frac{1}{T}\sum_{t=1}^T (\hat{D}_t - D_t)^2\right)^{1/2}
=
R_T^{1/2} \cdot o_p(1)
=
o_p(1),
\]
since $R_T \pto \kappa$. Hence, $\hat{V}_T - R_T = o_p(1)$ and $\hat{V}_T \pto \kappa$. Finally,
\[
\sqrt{T}\,\frac{\hat{S}_T}{\sqrt{\hat{V}_T}}
=
\sqrt{T}\,\frac{S_T}{\sqrt{R_T}}
\left(\frac{R_T}{\hat{V}_T}\right)^{1/2}
+
\sqrt{T}\,\frac{\hat{S}_T - S_T}{\sqrt{\hat{V}_T}}.
\]
We have shown that $\sqrt{T}\,\frac{S_T}{\sqrt{R_T}} \dto N(0,1)$, $\frac{R_T}{\hat{V}_T} \pto 1$, and $\sqrt{T}\,\frac{\hat{S}_T - S_T}{\sqrt{\hat{V}_T}} \pto 0$. Thus, by Slutsky's theorem,
$\sqrt{T}\,\frac{\hat{S}_T}{\sqrt{\hat{V}_T}} \dto N(0,1)$.
\end{proof}

\begin{lemma} \label{lemma:remainder}
For any measurable $f$, define the operator $P_T^{a} f:=\frac{1}{T} \sum_{t=1}^T a_{t}^{-1} w_t f(Z_t)$ for some set of $\mathcal{F}_{t-1}$ measurable weights $\{a_{t}\}_{t=1}^{T}$ uniformly bounded from below by a constant. Let $F(\theta,\eta,g)(z) := m_\theta(z) - \dot\ell_{\theta,\eta}(z)[g]$ and define 
$$R_T:=P_{T}^{a}\left(F(\hat\theta_t,\hat \eta_{t},\hat \alpha_t)- F(\theta_{P_e},\eta_{P_e},\alpha_{P_e}) \right).$$
Suppose the following conditions hold:
\begin{enumerate}[label=(\alph*)]
\item $\frac{1}{T}\sum_{t=1}^T \|\hat\theta_t-\theta_{P^e}\|_\Theta^4=o_p(T^{-1})$, $\frac{1}{T}\sum_{t=1}^T \|\hat\eta_t-\eta_{P^e}\|_\Upsilon^4=o_p(T^{-1})$, and $\frac{1}{T}\sum_{t=1}^T \|\hat \alpha_t-\alpha_{P^e}\|_\Theta^4=o_p(T^{-1})$
\item $P_T^{a}\Big(
\dot{m}_{\theta_{P_e}}(\cdot)[\hat\theta_t - \theta_{P_e}]
-
\ddot{\ell}_{\theta_{P_e},\eta_{P_e}}(\cdot)
[\hat\theta_t - \theta_{P_e}, \alpha_{P_e}]
\Big) = o_p(T^{-1/2})$

\item $P_T^{a}\Big(
\,\partial_{\eta} \partial_{\theta}\ell_{\theta_{P_e},\eta_{P_e}}(\cdot)
[ \alpha_{P_e},\hat\eta_t - \eta_{P_e}]
\Big) = o_p(T^{-1/2})$
\item $P_T^{a}\Big(
\,\partial_\theta \ell_{\theta_{P_e},\eta_{P_e}}(\cdot)
[\hat \alpha_t - \alpha_{P_e}]
\Big) = o_p(T^{-1/2})$
\item $\frac{1}{T^2}\ \sum_{t=1}^{T} \E\left[ w_{t}^{4}C(Z_t)^{4} \mid \mathcal{F}_{t-1}\right] =O_p(1)$.
\end{enumerate}
Then under Assumptions~\ref{assumption:distribution}-\ref{assumption:densities}, $R_T = o_p(T^{-1/2})$.
\end{lemma}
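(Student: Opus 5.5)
We prove \cref{lemma:remainder} by a second-order expansion of $F(\hat\theta_t,\hat\eta_t,\hat\alpha_t)-F(\theta_{P_e},\eta_{P_e},\alpha_{P_e})$ around the truth. This splits $R_T$ into first-order pieces, controlled by (b)--(d), and second-order pieces, controlled by (a) and (e).

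\textbf{Step 1: exact decomposition.} Write $\Delta^\theta_t=\hat\theta_t-\theta_{P_e}$, $\Delta^\eta_t=\hat\eta_t-\eta_{P_e}$ and $\Delta^\alpha_t=\hat\alpha_t-\alpha_{P_e}$. Bilinearity in the direction argument gives
\[
\dot\ell_{\hat\theta_t,\hat\eta_t}[\hat\alpha_t]-\dot\ell_{\theta_{P_e},\eta_{P_e}}[\alpha_{P_e}]
=\dot\ell_{\theta_{P_e},\eta_{P_e}}[\Delta^\alpha_t]
+\bigl(\dot\ell_{\hat\theta_t,\hat\eta_t}-\dot\ell_{\theta_{P_e},\eta_{P_e}}\bigr)[\alpha_{P_e}]
+\bigl(\dot\ell_{\hat\theta_t,\hat\eta_t}-\dot\ell_{\theta_{P_e},\eta_{P_e}}\bigr)[\Delta^\alpha_t].
\]
A Taylor expansion along the segment $(\theta_u,\eta_u)$, exactly as in the proof of \cref{thm:pe_expansion}, turns the second term into
\[
\ddot\ell_{\theta_{P_e},\eta_{P_e}}[\Delta^\theta_t,\alpha_{P_e}]+\partial_\eta\partial_\theta\ell_{\theta_{P_e},\eta_{P_e}}[\alpha_{P_e},\Delta^\eta_t]+r_{2,t}.
\]
The difference $m_{\hat\theta_t}-m_{\theta_{P_e}}$ similarly becomes $\dot m_{\theta_{P_e}}[\Delta^\theta_t]+r_{1,t}$.

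Collecting terms, $R_T$ splits into three groups:
\begin{itemize}
\item the three linear terms, which are exactly those in (b), (c) and (d) and are therefore $o_p(T^{-1/2})$ by hypothesis;
\item the pure remainders $P_T^a(r_{1,t}-r_{2,t})$;
\item the cross term $P_T^a\bigl((\dot\ell_{\hat\theta_t,\hat\eta_t}-\dot\ell_{\theta_{P_e},\eta_{P_e}})[\Delta^\alpha_t]\bigr)$.
\end{itemize}
This requires localization. Since (a) forces $\max_t$ of each error to be $o_p(T^{-1/4})$ (the max of a fourth power is at most $T$ times the average), all estimates lie in the neighborhood of \cref{assumption:envelope} with probability tending to one. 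Where needed we stop predictably at the first exit, as is done later in the paper.

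\textbf{Step 2: pointwise envelopes.} By \cref{assumption:envelope} and the integrated bound in \cref{lemma:hilbert_completion}, each second-order term is dominated by $C(z)q_t$. Here $q_t$ is a sum of products such as $\|\Delta^\theta_t\|^2$, $\|\Delta^\theta_t\|\|\Delta^\eta_t\|$, $\|\Delta^\eta_t\|^2$, $\|\Delta^\alpha_t\|(\|\Delta^\theta_t\|+\|\Delta^\eta_t\|)$, with factors $\|\alpha_{P_e}\|$ absorbed into constants. Each $q_t$ is $\mathcal F_{t-1}$-measurable. By Cauchy--Schwarz and (a),
\[
\frac1T\sum_t q_t^2\lesssim \sum_{x,y}\Bigl(\frac1T\sum_t\|\Delta^x_t\|^4\Bigr)^{1/2}\Bigl(\frac1T\sum_t\|\Delta^y_t\|^4\Bigr)^{1/2}=o_p(T^{-1}).
\]

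\textbf{Step 3: bounding the remainders (main obstacle).} The difficulty is that $w_t$ can be as large as $T^{1/3}$, so a crude bound $|P_T^a(Cq)|\le \frac1T\sum_t a_t^{-1}w_tC(Z_t)q_t$ must be handled carefully. We split each remainder sum into its compensator and a martingale part.

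\emph{Compensator.} Since $a_t^{-1}$ is bounded,
\[
\frac1T\sum_t \E[a_t^{-1}w_tC(Z_t)q_t\mid\mathcal F_{t-1}]\lesssim \E_{P_e}[C]\,\frac1T\sum_t q_t\le \Bigl(\frac1T\sum_t q_t^2\Bigr)^{1/2}=o_p(T^{-1/2}).
\]

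\emph{Martingale part.} Its predictable quadratic variation, scaled by $T$, is
\[
\frac1T\sum_t\E[w_t^2C^2\mid\mathcal F_{t-1}]q_t^2\le\Bigl(\frac1{T^2}\sum_t\E[w_t^4C^4\mid\mathcal F_{t-1}]\Bigr)^{1/2}\Bigl(\sum_t q_t^4\Bigr)^{1/2}.
\]
The first factor is $O_p(1)$ by (e). For the second, $\sum_t q_t^4\le(\sum_t q_t^2)^2=o_p(1)$. Hence $\sqrt T$ times the martingale part has vanishing conditional variance, and is $o_p(1)$ by Lenglart's inequality (the stopping/Markov argument used elsewhere in the paper).

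Combining the three groups gives $R_T=o_p(T^{-1/2})$. The delicate points are keeping the predictable localization consistent and checking that the Lipschitz envelopes indeed produce product rates, rather than $\|\Delta^\alpha_t\|$ alone.
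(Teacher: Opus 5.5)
Your proposal is correct, and it departs from the paper's argument at the one step where the departure matters. Both proofs do the same first-order expansion of $F$ and dispatch the three linear terms by (b)--(d). Both also dominate the second-order remainder by $C(Z_t)$ times a predictable product of errors ($\delta_t$ in the paper, $q_t$ for you), with $\frac1T\sum_t q_t^2=o_p(T^{-1})$ by (a).

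The routes then split:
\begin{itemize}
\item \emph{The paper} applies Cauchy--Schwarz across $t$. It bounds the remainder by $\bigl(\frac1T\sum_t a_t^{-2}w_t^2C(Z_t)^2\bigr)^{1/2}\bigl(\frac1T\sum_t\delta_t^2\bigr)^{1/2}$ and asserts, via ``conditional Chebyshev'' and (e), that the first factor is $O_p(1)$.
\item \emph{You} split the nonnegative dominating sum $\frac1T\sum_t a_t^{-1}w_tC(Z_t)q_t$ into two parts. In the compensator, the change of measure $\E[w_tC(Z_t)\mid\mathcal F_{t-1}]=\E_{P_e}[C]$ removes the weight entirely. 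The martingale part has scaled quadratic variation that vanishes by (e) together with $\sum_t q_t^4\le(\sum_t q_t^2)^2=o_p(1)$.
\end{itemize}

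What your route buys is validity when the weights diverge, which is exactly the regime in which the lemma is used: \cref{thm:clt} allows $K_T\asymp T^{1/3}$. The paper's first factor need not be $O_p(1)$. Condition (e) only controls the fluctuation of $\frac1T\sum_t w_t^2C(Z_t)^2$ around its compensator $\frac1T\sum_t\E_{P_e}[w_tC^2]$, and that compensator can be of order $K_T$. For example, let $\pi_e$ select a single action and let $\pi_t$ put mass $1/K_T$ on it. Then the compensator equals $K_T\E_{P_e}[C^2]$, while (e) reads $K_T^3\E_{P_e}[C^4]/T=O(1)$. So the paper's bound only gives $O_p(K_T^{1/2})\,o_p(T^{-1/2})$, and your argument closes that gap.

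You can in fact drop the martingale half. The dominating sum is nonnegative and its compensator is $o_p(T^{-1/2})$. Lenglart's domination inequality (the nonnegative-process bound used in the proof of \cref{thm:self-normalized}) then gives $o_p(T^{-1/2})$ directly, so hypothesis (e) is not needed for this lemma at all.

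One small slip: (a) yields $\max_t\|\Delta_t\|^4\le\sum_t\|\Delta_t\|^4=o_p(1)$, so $\max_t\|\Delta_t\|=o_p(1)$, not $o_p(T^{-1/4})$. This does no harm, since $o_p(1)$ is all the localization needs.
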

\begin{proof}
A first-order functional Taylor expansion of $F$ around $(\theta_{P_e},\eta_{P_e},\alpha_{P_e})$ yields
\begin{align*}
F(\hat\theta_t,\hat\eta_t,\hat g_t)(z) - F(\theta_{P_e},\eta_{P_e},\alpha_{P_e})(z)
&=
\partial_\theta F(\theta_{P_e},\eta_{P_e},\alpha_{P_e})[\hat\theta_t-\theta_{P_e}]
+
\partial_\eta F(\theta_{P_e},\eta_{P_e},\alpha_{P_e})[\hat\eta_t-\eta_{P_e}] \\
& \qquad +
\partial_g F(\theta_{P_e},\eta_{P_e},\alpha_{P_e})[\hat g_t-\alpha_{P_e}]
+
R_2(z),
\end{align*}
where $R_2(z)$ is a second-order remainder term. In particular, 
$$\partial_\theta F(\theta_{P_e},\eta_{P_e},\alpha_{P_e})[h]=
\dot m_{\theta_{P_e}}(z)[h]-
\ddot\ell_{\theta_{P_e},\eta_{P_e}}(z)[h,\alpha_{P_e}],
$$
$$
\partial_\eta F(\theta_{P_e},\eta_{P_e},\alpha_{P_e})[u]=-
\partial_\eta\partial_\theta
\ell_{\theta_{P_e},\eta_{P_e}}(z)[\alpha_{P_e},u],
$$
$$
\partial_g F(\theta_{P_e},\eta_{P_e},\alpha_{P_e})[h]
=
-
\dot\ell_{\theta_{P_e},\eta_{P_e}}(z)[h].
$$
By \cref{assumption:reg_pe}, there exists a measurable envelope $C \in L^{2}(P_e)$  such that
\begin{align*}|R_2(Z_t)| \le
C(Z_t)&\Big(
\|\hat\theta_t-\theta_{P_e}\|_\Theta^2
+
\|\hat\eta_t-\eta_{P_e}\|_\Upsilon^2
+
\|\hat\theta_t-\theta_{P_e}\|_\Theta \|\hat\eta_t-\eta_{P_e}\|_\Upsilon\\
& \quad +
\|\hat\theta_t-\theta_{P_e}\|_\Theta \|\hat g_t-\alpha_{P_e}\|_\Theta
+
\|\hat\eta_t-\eta_{P_e}\|_\Upsilon \|\hat g_t-\alpha_{P_e}\|_\Theta
\Big).
\end{align*}
Note that there is no quadratic term in $\|\hat g-\alpha_{P_e}\|_\Theta$, since $F(\theta,\eta,g)$ is affine in $g$.

Applying the operator $P_T^{a}$ to both sides gives
\begin{align*}
R_T &=
P_T^{a}\!\left[
\dot m_{\theta_{P_e}}(\cdot)[\hat\theta-\theta_{P_e}] -
\ddot\ell_{\theta_{P_e},\eta_{P_e}}(\cdot)[\hat\theta-\theta_{P_e},\alpha_{P_e}]
\right] \\
&\quad -
P_T^{a}\!\left[
\partial_\eta\partial_\theta
\ell_{\theta_{P_e},\eta_{P_e}}(\cdot)[\alpha_{P_e},\hat\eta-\eta_{P_e}]
\right]\\
&\quad -
P_T^{a}\!\left[
\dot\ell_{\theta_{P_e},\eta_{P_e}}(\cdot)[\hat g-\alpha_{P_e}]
\right]\\
&\quad -
P_T^{a}R_2.
\end{align*}
The first three terms on the right-hand side are each $o_p(T^{-1/2})$ by assumption. It therefore remains to control the final term. Using the above bound on $R_2$,
\begin{align*}
|P_T^{a}R_2(Z_t)|
\le\frac{1}{T}\sum_{t=1}^T \hat\sigma_t^{-1}w_t C(Z_t)\delta_t,
\end{align*}
where $\delta_{t} :=\|\hat\theta-\theta_{P_e}\|_\Theta^2
+
\|\hat\eta-\eta_{P_e}\|_\Upsilon^2
+
\|\hat\theta-\theta_{P_e}\|_\Theta \|\hat\eta-\eta_{P_e}\|_\Upsilon+
\|\hat\theta-\theta_{P_e}\|_\Theta \|\hat g-\alpha_{P_e}\|_\Theta+
\|\hat\eta-\eta_{P_e}\|_\Upsilon \|\hat g-\alpha_{P_e}\|_\Theta$. 
We apply the Cauchy-Schwarz inequality to get,
\begin{align*}
| P_T^{a}R_{2,t}|
&\le
\left(
\frac{1}{T}\sum_{t=1}^T \hat\sigma_t^{-2}w_t^2 C(Z_t)^2
\right)^{1/2}
\left(
\frac{1}{T}\sum_{t=1}^T \delta_t^2
\right)^{1/2} \\
\end{align*}
So it suffices to show that $\frac{1}{T}\sum_{t=1}^T \delta_t^2 = o_{p}(T^{-1})$ and $\frac{1}{T}\sum_{t=1}^T \hat\sigma_t^{-2} w_t^2 C(Z_t)^2 = O_{p}(1)$ to conclude that $R_{T} =o_p(T^{-1/2})$.

\paragraph{Showing $\frac{1}{T}\sum_{t=1}^T a_t^{-2} w_t^2 C(Z_t)^2 = O_p(1)$}  

Let $\zeta_t := a_t^{-2} w_t^2 C(Z_t)^2$. By conditional Chebyshev's inequality, it suffices to show that $\frac{1}{T^2}\sum_{t=1}^T E[\zeta_t^2\mid \mathcal F_{t-1}] = O_p(1)$ in order to conclude that  $\frac{1}{T}\sum_{t=1}^T \zeta_{t} = O_p(1)$.

However,  we know that $a > c$ for some constant $c>0$ uniformly over $t\in [T]$ with probability 1, thus
\begin{align*}
\frac{1}{T^2}\sum_{t=1}^T E[\zeta_t^2\mid \mathcal F_{t-1}]&= \frac{1}{T^2}\sum_{t=1}^T E[a_t^{-4} w_t^4 C(Z_t)^4\mid \mathcal F_{t-1}] \\
&\le a_{t}^{-1/4}\frac{1}{T^2}\ \sum_{t=1}^{T} \E\left[ w_{t}^{4}C(Z_t)^{4} \mid \mathcal{F}_{t-1}\right].\\
\end{align*}
This is $O_p(1)$ by assumption.

\paragraph{Showing $\frac{1}{T}\sum_{t=1}^T \delta_t^2 = O_{p}(T^{-1})$} 
Since $(a_1+\cdots+a_5)^2\le 5(a_1^2+\cdots+a_5^2)$,
\[
\delta_t^2
\lesssim
\|\hat\theta_t-\theta_{P^e}\|_\Theta^4
+
\|\hat\eta_t-\eta_{P^e}\|_\Upsilon^4
+
\|\hat\theta_t-\theta_{P^e}\|_\Theta^2
\|\hat\eta_t-\eta_{P^e}\|_\Upsilon^2
\]
\[
\qquad
+
\|\hat\theta_t-\theta_{P^e}\|_\Theta^2
\|\hat\alpha_t-\alpha_{P^e}\|_\Theta^2
+
\|\hat\eta_t-\eta_{P^e}\|_\Upsilon^2
\|\hat \alpha_t-\alpha_{P^e}\|_\Theta^2.
\]
By assumption, $\frac{1}{T}\sum_{t=1}^T \|\hat\theta_t-\theta_{P^e}\|_\Theta^4=o_p(T^{-1})$, $\frac{1}{T}\sum_{t=1}^T \|\hat\eta_t-\eta_{P^e}\|_\Upsilon^4=o_p(T^{-1})$, and $\frac{1}{T}\sum_{t=1}^T \|\hat g_t-g_{P^e}\|_\Theta^4=o_p(T^{-1})$. Moreover, the mixed terms are also $o_p(T^{-1})$ by Cauchy--Schwarz. For example,
$$
\frac{1}{T}\sum_{t=1}^T
\|\hat\theta_t-\theta_{P^e}\|_\Theta^2
\|\hat\eta_t-\eta_{P^e}\|_\Upsilon^2
\le
\left(
\frac{1}{T}\sum_{t=1}^T \|\hat\theta_t-\theta_{P^e}\|_\Theta^4
\right)^{1/2}
\left(
\frac{1}{T}\sum_{t=1}^T \|\hat\eta_t-\eta_{P^e}\|_\Upsilon^4
\right)^{1/2}
=
o_p(T^{-1}),
$$
and similarly for the remaining mixed terms. Hence $\frac{1}{T}\sum_{t=1}^T \delta_t^2=o_p(T^{-1})$.

\end{proof}
\begin{lemma} \label{lemma:empirical_predictable}
Let $\{f_t\}_{t=1}^T$ be a sequence of measurable maps $f_t : \mathcal Z \to \mathbb R$ such that
\begin{enumerate}[label=(\alph*)]
\item $\E\!\left[ w_t f_t(Z_t) \mid \mathcal F_{t-1}\right] = 0$ for each $t \in [T]$ almost surely. 
\item There exists a measurable function $C : \mathcal Z \to \mathbb R_+$ and nonnegative $\mathcal F_{t-1}$-measurable variables $\delta_t$ such that $|f_t(z)| \le C(z)\,\delta_t$ for all $z \in \mathcal Z$ and $\frac{1}{T} \sum_{t=1}^{T} \delta_{t}^{4} = o_p(T^{-1})$.
\item $\frac{1}{T^{2}}\sum_{t=1}^TE[w_t^2 C(Z_t)^2\mid \mathcal F_{t-1}] = O_p(1)$.
\end{enumerate}
Let $\{a_{t}\}_{t=1}^{T}$ be a $\mathcal{F}_{t-1}$ measurable sequence such that $\inf_{t\in[T]} a_{t} > c$ for some $c$ > 0.  Then under Assumptions~\ref{assumption:distribution}-\ref{assumption:reg_pe},
\[
\frac{1}{T}\sum_{t=1}^T a_t^{-1} w_t f_t(Z_t)
=
o_p(T^{-1/2}).
\]
\end{lemma}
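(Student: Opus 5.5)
The plan is to show that the weighted average
\[
\frac1T\sum_{t=1}^T a_t^{-1} w_t f_t(Z_t)
\]
is a sum of martingale differences with small predictable quadratic variation, and then to apply a conditional Chebyshev (Lenglart-type) inequality.

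\textbf{Step 1 (martingale structure).} Set $\xi_t := a_t^{-1} w_t f_t(Z_t)$. Because $a_t$ is $\mathcal{F}_{t-1}$ measurable, condition (a) gives
\[
\E[\xi_t\mid\mathcal{F}_{t-1}] = a_t^{-1}\E[w_t f_t(Z_t)\mid\mathcal{F}_{t-1}] = 0.
\]
So $\{\xi_t\}$ is a martingale difference sequence. Integrability will be handled by localization, described at the end.

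\textbf{Step 2 (predictable variance bound).} By condition (b) and $a_t>c$,
\[
\frac1{T}\sum_{t=1}^T \E[\xi_t^2\mid\mathcal{F}_{t-1}]
\le c^{-2}\,\frac1T\sum_{t=1}^T \delta_t^2\,\E[w_t^2C(Z_t)^2\mid\mathcal{F}_{t-1}].
\]
Apply Cauchy--Schwarz over $t$:
\[
\frac1T\sum_t \delta_t^2 \E[w_t^2C^2\mid\mathcal{F}_{t-1}]
\le \Big(\frac1T\sum_t\delta_t^4\Big)^{1/2}\Big(\frac1T\sum_t \{\E[w_t^2C^2\mid\mathcal{F}_{t-1}]\}^2\Big)^{1/2}.
\]
The first factor is $o_p(T^{-1/2})$ by (b). I read condition (c) in the way it is actually used in the proof of \cref{thm:clt}, namely as
\[
\frac1{T^2}\sum_t\{\E[w_t^2C(Z_t)^2\mid\mathcal{F}_{t-1}]\}^2 = O_p(1).
\]
Under that reading the second factor is $O_p(T^{1/2})$. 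The product is then $o_p(1)$, so
\[
V_T:=\sum_{t=1}^T \E\Big[\big(T^{-1/2}\xi_t\big)^2\,\Big|\,\mathcal{F}_{t-1}\Big]=o_p(1).
\]
This is where the exponents in (b) and (c) must match exactly, and it is the step I expect to be the main obstacle. If (c) is read literally, as stated without the square, a cruder bound would be needed. In that case I would instead use $\max_t\delta_t^2\le(\sum_t\delta_t^4)^{1/2}=o_p(1)$ together with
\[
\frac1T\sum_t\E[w_t^2C^2\mid\mathcal{F}_{t-1}]=O_p(T).
\]
That combination yields only $O_p(1)\cdot o_p(1)$ after one checks the scaling, and verifying it carefully is the delicate part.

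\textbf{Step 3 (conclude).} Apply Lenglart's inequality to the martingale $M_T=T^{-1/2}\sum_t\xi_t$: for any $\eta,b>0$,
\[
\Pr(|M_T|>\eta)\le b/\eta^2+\Pr(V_T>b).
\]
Equivalently, stop the martingale at the first $t$ at which the predictable partial sum of conditional variances exceeds $b$; this stopping time is predictable, so the stopped process has second moment at most $b$. Since $V_T\to_p0$, letting $T\to\infty$ and then $b\downarrow0$ gives $M_T=o_p(1)$, that is,
\[
\frac1T\sum_{t=1}^T a_t^{-1}w_tf_t(Z_t)=o_p(T^{-1/2}).
\]
The same predictable stopping also removes any need for unconditional integrability in Step 1.
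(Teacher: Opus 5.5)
Your proposal is correct and follows the paper's own argument. It uses martingale differences from (a), then a Cauchy--Schwarz bound over $t$ on the predictable variance pairing $\frac1T\sum_t\delta_t^4=o_p(T^{-1})$ with the squared form $\frac1{T^2}\sum_t\{\E[w_t^2C(Z_t)^2\mid\mathcal F_{t-1}]\}^2=O_p(1)$ of (c), and finishes with conditional Chebyshev/Lenglart. That squared form is exactly what the paper verifies in the proof of \cref{thm:clt}, and your bookkeeping $\frac1T\sum_t\E[\xi_t^2\mid\mathcal F_{t-1}]=o_p(1)$ is cleaner than the paper's.

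Do drop the fallback for the literal reading of (c), though. There $\max_t\delta_t^2\cdot\frac1T\sum_t\E[w_t^2C(Z_t)^2\mid\mathcal F_{t-1}]$ is only $o_p(1)\cdot O_p(T)=o_p(T)$, so that route does not close and the squared form is genuinely needed.
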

\begin{proof}
Define $\zeta_t := a_t^{-1} w_t f_t(Z_t)$. Since $a_t^{-1}$ is $\mathcal F_{t-1}$-measurable, by assumption  $E[\zeta_t \mid \mathcal F_{t-1}] = 0$, so $\{\zeta_t\}_{t=1}^T$ is a martingale difference sequence. By conditional Chebyshev's inequality, it suffices to show that $\frac{1}{T^{2}}\sum_{t=1}^{T} \E\left[ \zeta^{2} \mid \Filt \right] = O_p(1)$ in order to conclude that $\frac{1}{T} \sum_{t=1}^{T}\zeta_{t} = o_p(T^{-1/2})$

We have by assumption that  $|\zeta_t|^2 = a_t^{-2} w_t^2 f_t(Z_t)^2 \le a_t^{-2} w_t^2 C(Z_t)^2 \delta_t^2$. Taking conditional expectations and using that $\delta_t$ is $\mathcal F_{t-1}$-measurable,
$$
E[\zeta_t^2 \mid \mathcal F_{t-1}]
\le
a_{t}^{-2} \delta_t^2 E[w_t^2 C(Z_t)^2 \mid \mathcal F_{t-1}].
$$
Moreover $a_t^2 \ge c^{2} > 0$ uniformly for some $c>0$, so
$$
E[\zeta_t^2 \mid \mathcal F_{t-1}]
\le
c^{-2} \delta_t^2 E[w_t^2 C(Z_t)^2 \mid \mathcal F_{t-1}].
$$

Applying Cauchy--Schwarz,
$$
\frac{1}{T^{2}}\sum_{t=1}^Tc^{-2}
\delta_t^2 E[w_t^2 C(Z_t)^2 \mid \mathcal F_{t-1}]
\le
c^{-2}\frac{1}{T}\left(\frac{1}{T}\sum_{t=1}^T \delta_t^4\right)^{1/2}
\left(\frac{1}{T}\sum_{t=1}^T
\big(E[w_t^2 C(Z_t)^2 \mid \mathcal F_{t-1}]\big)^2
\right)^{1/2}.
$$

By assumption, $\left(\sum_{t=1}^T \delta_t^4\right)^{1/2} =o_p(T^{-1/2})$ and $\left(\frac{1}{T}\sum_{t=1}^T
\big(E[w_t^2 C(Z_t)^2 \mid \mathcal F_{t-1}]\big)^2
\right)^{1/2} = O_p(T^{-1/2})$, so the entire term is $O_p(1)$.
\end{proof}

\begin{lemma} \label{lemma:bounded_riesz}
Under the conditions of \cref{thm:clt}, if $\|\alpha_{P_e}\|_\Theta<\infty$, then
\[
\sup_{t \in[ T]}\|\hat\alpha_t\|_\Theta=O_p(1).
\]
\end{lemma}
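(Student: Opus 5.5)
The natural route is the triangle inequality,
\[
\sup_{t\in[T]}\|\hat\alpha_t\|_\Theta \le \|\alpha_{P_e}\|_\Theta + \sup_{t\in[T]}\|\hat\alpha_t-\alpha_{P_e}\|_\Theta .
\]
This reduces the claim to showing that the maximal estimation error of the Riesz representer is $O_p(1)$. The first term is a finite constant by hypothesis. Here $\|\cdot\|_\Theta$ is understood as the extended equivalent norm on $\bar\Theta$ from Lemma~\ref{lemma:hilbert_completion}, since $\alpha_{P_e}$ may only lie in the completion.

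For the second term I would use condition~\ref{cond:rates} of \cref{thm:clt}, namely $\frac1T\sum_{t=1}^T\|\hat\alpha_t-\alpha_{P_e}\|_\Theta^4=o_p(T^{-1})$. Because the sum of nonnegative terms dominates any single term,
\[
\max_{t\in[T]}\|\hat\alpha_t-\alpha_{P_e}\|_\Theta^4 \le \sum_{t=1}^T\|\hat\alpha_t-\alpha_{P_e}\|_\Theta^4 = T\cdot o_p(T^{-1}) = o_p(1).
\]
Taking fourth roots gives $\sup_{t}\|\hat\alpha_t-\alpha_{P_e}\|_\Theta=o_p(1)$, which is stronger than needed. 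Combining this with the triangle inequality yields $\sup_t\|\hat\alpha_t\|_\Theta\le \|\alpha_{P_e}\|_\Theta+o_p(1)=O_p(1)$.

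I expect no real obstacle. The only point needing care is that the triangle inequality is applied in $\bar\Theta$ with the extended norm. This is legitimate because Lemma~\ref{lemma:hilbert_completion} shows that $\|\cdot\|_\Theta$ extends to a genuine norm, equivalent to $\|\cdot\|_{H_{P_e}}$, and each $\hat\alpha_t\in\Theta\subset\bar\Theta$.

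The same maximal argument also covers the frozen estimator $\bar\alpha=\hat\alpha_{m_T}$ used in \cref{thm:self-normalized}. For that estimator one could alternatively use the direct assumption $\|\bar\alpha-\alpha_{P_e}\|_\Theta=o_p(m_T^{-1/4})$, which gives $\|\bar\alpha\|_\Theta=O_p(1)$ immediately. That is the form in which the lemma is applied in the proof of \cref{thm:self-normalized}.
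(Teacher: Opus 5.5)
Your proof is correct and matches the paper's argument step for step. You bound the maximal fourth power of $\|\hat\alpha_t-\alpha_{P_e}\|_\Theta$ by the full sum, which is $T\cdot o_p(T^{-1})=o_p(1)$ by condition~\ref{cond:rates}, take fourth roots, and conclude with the triangle inequality; your added remarks on using the extended norm on $\bar\Theta$ from Lemma~\ref{lemma:hilbert_completion} and on covering $\bar\alpha=\hat\alpha_{m_T}$ are sound refinements that the paper leaves implicit.
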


\begin{proof}
Since all terms are nonnegative,
\[
\sup_{t \in [T]}\|\hat\alpha_t-\alpha_{P_e}\|_\Theta^4
\le
\sum_{t=1}^T
\|\hat\alpha_t-\alpha_{P_e}\|_\Theta^4
=
T\cdot
\frac1T\sum_{t=1}^T
\|\hat\alpha_t-\alpha_{P_e}\|_\Theta^4
=o_p(1).
\]
Taking fourth roots gives that $\sup_{t \in [T]}\|\hat\alpha_t-\alpha_{P_e}\| =o_p(1)$. The final claim follows from
\[
\sup_{t\le T}\|\hat\alpha_t\|_\Theta
\le
\|\alpha_{P_e}\|_\Theta
+
\sup_{t\in [T]}\|\hat\alpha_t-\alpha_{P_e}\|_\Theta .
\]
\end{proof}

\begin{lemma} \label{lemma:fourth_IF} Assume the conditions of \cref{thm:clt}. Let
$
\hat\varphi_t(z)
:=
m_{\hat\theta_t}(z)
-
\hat\Psi_t
-
\partial_\theta\ell_{\hat\theta_t,\hat\eta_t}(z)[\hat\alpha_t]$ where $\hat\Psi_t$ is a $\mathcal{F}_{t-1}$ measurable estimator such that $\sup_{t \in [T]} \hat\Psi_{t} = O_p(1)$. Assume the overlap condition $\sup_{\pi \in \Pi}\sup_{c,a} \frac{\pi_{e}}{\pi} \le K_{T}$ for some $K_{T}< \infty$.
Then, for any fixed $k\ge 1$, 
\[
\sup_{t\in[T]}
\mathbb E[w_{t}^{k}
\hat\varphi_t^4(Z_t)\mid\mathcal F_{t-1}
]
=
O_p(1) \qquad  \text{ and } \qquad \sup_{t\in[T]}
\mathbb E[w_{t}^{k}
\varphi_t^4(Z_t)\mid\mathcal F_{t-1}]=
O_p(1) .
\]
\end{lemma}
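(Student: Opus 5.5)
The plan is to reduce both claims to a change of measure followed by a uniform fourth-moment bound under $P_e$. Since $\hat\theta_t,\hat\eta_t,\hat\alpha_t,\hat\Psi_t$ are $\mathcal F_{t-1}$-measurable, $\hat\varphi_t$ is a fixed function conditionally on $\mathcal F_{t-1}$. The identity $\E[w_t f(Z_t)\mid\mathcal F_{t-1}]=\E_{P_e}[f(Z)]$ from \cref{subsec:adaptive_weights} then gives, for $k\ge1$,
\[
\E[w_t^k\hat\varphi_t^4(Z_t)\mid\mathcal F_{t-1}]=\E_{P_e}\!\left[\Big(\tfrac{\pi_e}{\pi_t}\Big)^{k-1}\hat\varphi_t^4\right]\le K_T^{k-1}\,\E_{P_e}[\hat\varphi_t^4].
\]
The same bound holds with $\varphi_{P_e}$ in place of $\hat\varphi_t$. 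This uses $\pi_t\in\Pi$ and the overlap hypothesis $\sup_{\pi,c,a}\pi_e/\pi\le K_T$.

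As the lemma is worded, $K_T$ is a given finite overlap constant, so the factor $K_T^{k-1}$ is a fixed multiplier. For $k=1$ this factor is identically one, which is the case actually used in most applications. Both claims therefore reduce to showing $\sup_{t}\E_{P_e}[\hat\varphi_t^4]=O_p(1)$ and $\E_{P_e}[\varphi_{P_e}^4]<\infty$.

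The oracle part is direct. We have $|m_{\theta_{P_e}}|\le C$ and $|\partial_\theta\ell_{\theta_{P_e},\eta_{P_e}}[\alpha_{P_e}]|\le C\|\alpha_{P_e}\|_\Theta$ by \cref{assumption:envelope} together with \cref{lemma:hilbert_completion}. Since $\Psi_e$ is a finite constant, $|\varphi_{P_e}|\le C(1+\|\alpha_{P_e}\|_\Theta)+|\Psi_e|$, and $\E_{P_e}[C^4]<\infty$ closes this part.

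For the plug-in part, I first localize. By \ref{cond:rates}, the argument of \cref{lemma:bounded_riesz} applied to each nuisance gives
\[
\sup_t\|\hat\theta_t-\theta_{P_e}\|_\Theta=o_p(1),\qquad \sup_t\|\hat\eta_t-\eta_{P_e}\|_\Upsilon=o_p(1),\qquad \sup_t\|\hat\alpha_t\|_\Theta=O_p(1).
\]
Hence, with probability tending to one, all $(\hat\theta_t,\hat\eta_t)$ lie in the envelope neighborhood simultaneously for all $t\le T$. On that event, $|m_{\hat\theta_t}|\le C$. The segment-integration bound in the proof of \cref{lemma:hilbert_completion} gives
\[
|\partial_\theta\ell_{\hat\theta_t,\hat\eta_t}[\hat\alpha_t]|\le C\,\bigl(1+\|\hat\theta_t-\theta_{P_e}\|_\Theta+\|\hat\eta_t-\eta_{P_e}\|_\Upsilon\bigr)\,\|\hat\alpha_t\|_\Theta .
\]
Writing $L_T:=\sup_t|\hat\Psi_t|+\sup_t\|\hat\alpha_t\|_\Theta(1+o_p(1))=O_p(1)$, we get $|\hat\varphi_t(z)|\le (1+L_T)(C(z)+1)$ for all $t$. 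Consequently
\[
\sup_t\E_{P_e}[\hat\varphi_t^4]\le 8(1+L_T)^4\,\bigl(\E_{P_e}[C^4]+1\bigr)=O_p(1),
\]
using the convexity inequality $(a+b)^4\le 8(a^4+b^4)$. Combining this with the change-of-measure display yields the stated $O_p(1)$ bound for every fixed $k$.

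The main obstacle is the uniformity in $t$. The rate condition \ref{cond:rates} only controls time averages, so it must be upgraded to a supremum through the crude bound $\sup_t x_t^4\le\sum_t x_t^4=o_p(1)$. In addition, the $O_p$ statement must be handled on a localization event rather than almost surely: I would intersect the event where all fitted quantities lie in the neighborhood with the event $\{L_T\le L\}$, and let $L\to\infty$.

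A caveat on the role of $K_T$: the multiplier $K_T^{k-1}$ is $O(1)$ only because $K_T$ is treated as a fixed finite constant, as the lemma states. If $K_T$ were allowed to grow with $T$, the honest conclusion for $k\ge2$ would be $O_p(K_T^{k-1})$. That is the form implicitly tracked where the lemma is invoked with growing weights, for example in the $K_T^3$ factors appearing in the proofs of \cref{thm:self-normalized} and \cref{thm:var_estimate}.
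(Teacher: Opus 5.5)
Your proposal is correct and follows essentially the same route as the paper's proof: upgrade the averaged rates in \ref{cond:rates} to uniform-in-$t$ localization via $\sup_t x_t^4\le\sum_t x_t^4=o_p(1)$, bound $|\hat\varphi_t|$ and $|\varphi_t|$ by a constant multiple of $1+C(z)$ on the localization event, and change measure to $P_e$ at the cost of a factor $K_T^{k-1}$. The only differences are that you do the change of measure first and use the segment-integration bound directly, where the paper splits into a base term plus remainder. Your caveat is also accurate: the paper's proof reaches $(L')^4K_T^{k-1}\E_{P_e}[(1+C(Z))^4]$ and calls it $O_p(1)$ without comment, which holds only for $k=1$ or bounded $K_T$; the honest $O_p(K_T^{k-1})$ form is what the proofs of \cref{thm:self-normalized} and \cref{thm:var_estimate} actually track.
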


\begin{proof}
By \cref{lemma:bounded_riesz},
\[
\sup_{t\in[T]}\|\hat\alpha_t\|_\Theta=O_p(1).
\]
Furthermore, repeating the arguments in \cref{lemma:bounded_riesz} note that by condition~\ref{cond:rates} of \cref{thm:clt}, we have that
$$\sup_{t \in [T]} \norm{\hat\theta_t - \theta_{P_e}}_{\Theta}^{4} \le \sum_{t=1}^{T} \norm{\hat\theta_t - \theta_{P_e}}_{\Theta}^{4} = o_p(1)$$
$$\sup_{t \in [T]} \norm{\hat\eta_t - \eta_{P_e}}_{\Upsilon}^{4} \le \sum_{t=1}^{T} \norm{\hat\eta_{t} - \eta_{P_e}}^{4}_{\Upsilon} = o_p(1)$$
by the same arguments. Then, by taking fourth roots, we have that $\sup_{t \in [T]} \norm{\hat\theta_t - \theta_{P_e}}_{\Theta} =o_p(1)$. Therefore,  $(\hat\theta_t,\hat\eta_t)$ lies in a neighborhood where the envelope condition \ref{cond:envelope} of \cref{assumption:reg_pe} holds.

In particular,
\[
|m_{\hat\theta_t}(z)| \le C(z), \qquad \text{for all } t \in [T].
\]

Next, decompose
\[
\partial_\theta \ell_{\hat\theta_t,\hat\eta_t}(z)[\hat\alpha_t]
=
\partial_\theta \ell_{\theta_{P_e},\eta_{P_e}}(z)[\hat\alpha_t]
+
R_t(z),
\]
where, by the same arguments as \cref{thm:pe_expansion}, 
\[
|R_t(z)|
\le
C(z)\|\hat\alpha_t\|_\Theta
\left(
\|\hat\theta_t - \theta_{P_e}\|_\Theta
+
\|\hat\eta_t - \eta_{P_e}\|_\Upsilon
\right).
\]

Therefore, on an event with probability tending to $1$, there exists a constant $L < \infty$ such that, uniformly over $t \in [T]$,
\[
\|\hat\alpha_t\|_\Theta \le L,
\quad
\|\hat\theta_t - \theta_{P_e}\|_\Theta \le 1,
\quad
\|\hat\eta_t - \eta_{P_e}\|_\Upsilon \le 1,
\quad
|\hat\Psi_t| \le L.
\]

Thus, $|R_t(z)| \le L C(z)$ and $|\partial_\theta \ell_{\theta_{P_e},\eta_{P_e}}(z)[\hat\alpha_t]|
\le C(z)\|\hat\alpha_t\|_\Theta \le L C(z)$. Combining everything, we have that 
\[
|\hat\varphi_t(z)|
=
|m_{\hat\theta_t}(z) - \hat\Psi_t - \partial_\theta \ell_{\hat\theta_t,\hat\eta_t}(z)[\hat\alpha_t]|
\le
L' (1 + C(z))
\]
for some finite constant $L'$, uniformly over $t \in [T]$ on an event with probability tending to $1$. Therefore, on this event, 
\begin{align*}
\sup_{t \in [T]}\mathbb{E}\left[w_t^k \hat\varphi_t^4(Z_t)\mid \mathcal{F}_{t-1}\right]
&\le 
\sup_{t \in [T]}(L')^4 \mathbb{E}\!\left[w_t^k (1 + C(Z_t))^4 \mid \mathcal{F}_{t-1}\right] \\
& \le \sup_{t \in [T]}(L')^4 \mathbb{E}\!\left[ \left(\sup_{t \in [T]} w_t^{k-1}\right)w_t (1 + C(Z_t))^4 \mid \mathcal{F}_{t-1}\right] \\ 
&\le \sup_{t \in [T]}(L')^4 K_{T}^{k-1} \mathbb{E}_{P_e}\!\left[ (1 + C(Z))^4 \right].
\end{align*}
However, $\mathbb{E}_{P_e}\!\left[ (1 + C(Z))^4 \right]$ is bounded by assumption so the entire term is $O_p(1)$. For the oracle influence function, applying the envelope condition again gives us 
\[
|\varphi_t|
=
|m_{\theta_{P_e}}(Z_t)-\Psi_e-\partial_\theta\ell_{\theta_{P_e},\eta_{P_e}}(Z_t)[\alpha_{P_e}]|
\le
L(1+C(Z_t))
\]
for some finite constant \(L\). Repeating the same steps as above gives us that this term is $O_{p}(1)$.

\end{proof}

\begin{lemma} \label{lemma:epe_f}
Let $F(\theta,\eta,\alpha)(z)
:=
m_{\theta_t}(z)
-
\partial_\theta \ell_{\theta,\eta}(z)[\alpha]$
and 
$F_0(z):= F(\theta_{P_e},\eta_{P_e},\alpha_{P_e})$. 
Then under the conditions of \cref{thm:clt},
\[
\frac1T\sum_{t=1}^T
\E_{P_e}\left[ F(\hat\theta_t,\hat\eta_t,\hat\alpha_t)(Z) - F_{0}(Z) \right]
=
o_p(T^{-1/2}).
\]
\end{lemma}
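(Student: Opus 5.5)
The plan is to reduce the time average to a per-$t$ population expansion and then control products of estimation errors with the moment condition~\ref{cond:rates}. First I will localize. By the argument of Lemma~\ref{lemma:bounded_riesz}, condition~\ref{cond:rates} gives
\[
\sup_{t\in[T]}\|\hat\theta_t-\theta_{P_e}\|_\Theta^4\le \sum_{t=1}^T\|\hat\theta_t-\theta_{P_e}\|_\Theta^4=o_p(1).
\]
The same bound holds for $\hat\eta_t$ and $\hat\alpha_t$. Hence, on an event $\mathcal E_T$ with $\Pr(\mathcal E_T)\to1$, every $(\hat\theta_t,\hat\eta_t,\hat\alpha_t)$ lies in the neighborhood where Assumption~\ref{assumption:envelope} and \cref{thm:pe_expansion} apply, and $\sup_t\|\hat\alpha_t\|_\Theta\le L$ for a fixed $L$. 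Since the claim is an $o_p$ statement, it suffices to work on $\mathcal E_T$.

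Next, for fixed $t$, I write
\[
\E_{P_e}\!\left[F(\hat\theta_t,\hat\eta_t,\hat\alpha_t)-F_0\right]
=\bigl(\Psi_e(\hat\theta_t)-\Psi_e(\theta_{P_e})\bigr)-\E_{P_e}\!\left[\partial_\theta\ell_{\hat\theta_t,\hat\eta_t}[\hat\alpha_t]\right]+\E_{P_e}\!\left[\partial_\theta\ell_{\theta_{P_e},\eta_{P_e}}[\alpha_{P_e}]\right].
\]
Here $\hat\theta_t,\hat\eta_t,\hat\alpha_t$ are treated as fixed arguments, which is legitimate because $\E_{P_e}$ integrates only over $Z$. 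The last term vanishes by the first-order condition in Assumption~\ref{assumption:reg_pe}. Applying \cref{thm:pe_expansion} with $(\bar\theta,\bar\eta,\bar\alpha)=(\hat\theta_t,\hat\eta_t,\hat\alpha_t)$, the quantity equals $-H_{P_e}[\hat\alpha_t-\alpha_{P_e},\hat\theta_t-\theta_{P_e}]$ plus remainders.

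I will not quote the theorem's $\|\bar\alpha\|_\Theta$ prefactor blindly. The term $R_1$ in its proof actually carries no $\bar\alpha$ factor, so I will use the bound
\[
O\!\left((1+\|\hat\alpha_t\|_\Theta)\bigl(\|\hat\theta_t-\theta_{P_e}\|_\Theta^2+\|\hat\theta_t-\theta_{P_e}\|_\Theta\|\hat\eta_t-\eta_{P_e}\|_\Upsilon+\|\hat\eta_t-\eta_{P_e}\|_\Upsilon^2\bigr)\right).
\]
The Hessian envelope gives $|H_{P_e}(a,b)|\le \E_{P_e}[C]\,\|a\|_\Theta\|b\|_\Theta$.

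Finally, I average over $t$. On $\mathcal E_T$ the prefactor $1+\|\hat\alpha_t\|_\Theta$ is bounded by $1+L$. For generic error sequences $a_t,b_t$, Cauchy--Schwarz followed by Jensen gives
\[
\frac1T\sum_t a_tb_t\le\Bigl(\frac1T\sum_t a_t^4\Bigr)^{1/4}\Bigl(\frac1T\sum_t b_t^4\Bigr)^{1/4}=o_p(T^{-1/4})\,o_p(T^{-1/4})=o_p(T^{-1/2}),
\]
and likewise $\frac1T\sum_t a_t^2\le(\frac1T\sum_t a_t^4)^{1/2}=o_p(T^{-1/2})$. Applying these to the Hessian cross term and each remainder term proves the claim.

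The proofs of \cref{thm:self-normalized} and \cref{thm:var_estimate} use the squared version $\frac1T\sum_t\E_{P_e}[(F(\hat\theta_t,\hat\eta_t,\hat\alpha_t)-F_0)^2]=o_p(T^{-1/2})$, so I will also record it. On $\mathcal E_T$, the envelope and local Lipschitz bounds of Assumption~\ref{assumption:envelope}, with the loss derivative integrated along the segment as in Lemma~\ref{lemma:hilbert_completion}, give the pointwise bound
\[
|F(\hat\theta_t,\hat\eta_t,\hat\alpha_t)-F_0|(z)\lesssim C(z)\bigl(\|\hat\theta_t-\theta_{P_e}\|_\Theta+\|\hat\eta_t-\eta_{P_e}\|_\Upsilon+\|\hat\alpha_t-\alpha_{P_e}\|_\Theta\bigr).
\]
Squaring, integrating with $\E_{P_e}[C^2]<\infty$, and applying the same Jensen step yields the squared statement.

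The main obstacle is the localization step. All the constants in \cref{thm:pe_expansion} and Assumption~\ref{assumption:envelope} are valid only inside a fixed neighborhood, and they must hold uniformly in $t$. This is what forces the passage from the averaged fourth-moment rates to sup-norm consistency before any per-$t$ expansion is used. A secondary obstacle is keeping track of the $(1+\|\hat\alpha_t\|_\Theta)$ versus $\|\hat\alpha_t\|_\Theta$ prefactor, so that the nonlinear-$m$ remainder is handled correctly.
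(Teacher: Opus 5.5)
Your proposal is correct, but for the displayed claim it takes a different route from the paper. The paper's proof never invokes \cref{thm:pe_expansion}. It splits $F(\hat\theta_t,\hat\eta_t,\hat\alpha_t)-F_0$ into the $m$-increment and the loss-derivative increment, separating $\partial_\theta\ell_{\theta_{P_e},\eta_{P_e}}[\hat\alpha_t-\alpha_{P_e}]$ from the change of $\partial_\theta\ell$ along the segment in $(\theta,\eta)$. It then bounds each piece in $L^2(P_e)$, linearly in the estimation errors, via the envelope and Lipschitz conditions. Averaging with Jensen gives $\frac1T\sum_t\E_{P_e}[(F(\hat\theta_t,\hat\eta_t,\hat\alpha_t)-F_0)^2]=o_p(T^{-1/2})$. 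That is essentially your final paragraph, and it is the squared form actually invoked in the proofs of \cref{thm:self-normalized} and \cref{thm:var_estimate}.

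Your main argument instead uses the one-step expansion, i.e.\ the first-order condition, the Riesz identity and Neyman orthogonality. It shows that the mean $\E_{P_e}[F(\hat\theta_t,\hat\eta_t,\hat\alpha_t)-F_0]$ is second order: $-H_{P_e}[\hat\alpha_t-\alpha_{P_e},\hat\theta_t-\theta_{P_e}]$ plus product remainders. This product structure is what the unsquared display needs. The $L^2$ bound alone only yields $\frac1T\sum_t|\E_{P_e}[F(\hat\theta_t,\hat\eta_t,\hat\alpha_t)-F_0]|\le(\frac1T\sum_t\E_{P_e}[(F(\hat\theta_t,\hat\eta_t,\hat\alpha_t)-F_0)^2])^{1/2}=o_p(T^{-1/4})$.

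So the paper's route is more elementary, needing no orthogonality or Riesz structure, but it delivers only the squared statement. Yours delivers both the statement as written and the squared one. Your explicit localization is left implicit in the paper; it passes from the averaged fourth moments to uniform-in-$t$ consistency so the envelope constants apply simultaneously for all $t$. Your corrected prefactor $(1+\|\hat\alpha_t\|_\Theta)$ matches what the paper itself uses in the proof of \cref{thm:self-normalized}.
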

\begin{proof}
We decompose
\[
F(\hat\theta_t,\hat\eta_t,\hat\alpha_t)(z) - F_{0}(z)
=
\{m_{\hat\theta_t}(z)-m_{\theta_{P_e}}(z)\}
-
\left\{
\partial_\theta\ell_{\hat\theta_t,\hat\eta_t}[\hat\alpha_t](z)
-
\partial_\theta\ell_{\theta_{P_e},\eta_{P_e}}[\alpha_{P_e}](z)
\right\}.
\]
We have using \cref{assumption:envelope} that 
\begin{align*}
    |m_{\hat\theta_t}(z)-m_{\theta_{P_e}}(z)|
\leq
\int_0^1 C(z)\|\hat\theta_t - \theta_{P_e}\|_\Theta\,ds
=
C(z)\|\hat\theta_t - \theta_{P_e}\|_\Theta.
\end{align*}
Therefore, 
\[
\E_{P_e}[(m_{\hat\theta_t}-m_{\theta_{P_e}})^2]
\leq
\E_{P_e}[C(Z)^2]\|\hat\theta_t - \theta_{P_e}\|_\Theta^2 .
\] 
Next, write
\begin{align*}
    \partial_\theta\ell_{\hat\theta_t,\hat\eta_t}[\hat\alpha_t]
-
\partial_\theta\ell_{\theta_{P_e},\eta_{P_e}}[\alpha_{P_e}]
&=
\partial_\theta\ell_{\hat\theta_t,\hat\eta_t}[\Delta_{\alpha,t}] +\left\{
\partial_\theta\ell_{\hat\theta_t,\hat\eta_t}
-
\partial_\theta\ell_{\theta_{P_e},\eta_{P_e}}
\right\}[\alpha_{P_e}] \\
&=\partial_\theta\ell_{\theta_{P_e},\eta_{P_e}}[\Delta_{\alpha,t}]
+
\left\{
\partial_\theta\ell_{\hat\theta_t,\hat\eta_t}
-
\partial_\theta\ell_{\theta_{P_e},\eta_{P_e}}
\right\}[\Delta_{\alpha,t}] +\left\{
\partial_\theta\ell_{\hat\theta_t,\hat\eta_t}
-
\partial_\theta\ell_{\theta_{P_e},\eta_{P_e}}
\right\}[\alpha_{P_e}]\\
\end{align*}
For the first term, we have by \cref{assumption:envelope} that
$
\E_{P_e}\left[
\left\{
\partial_\theta\ell_{\theta_{P_e},\eta_{P_e}}[\alpha_t - \alpha_{P_e}]
\right\}^2
\right]
\lesssim
\|\alpha_t - \alpha_{P_e}\|_\Theta^2$. For the second and third term, consider that

\begin{align*}
\left\{
\partial_\theta\ell_{\hat\theta_t,\hat\eta_t}
-
\partial_\theta\ell_{\theta_{P_e},\eta_{P_e}}
\right\}[h]
&=
\underbrace{\int_0^1
\partial_\theta^2
\ell_{\theta_{P_e}+s\Delta_{\theta,t},
      \eta_{P_e}+s\Delta_{\eta,t}}
[\Delta_{\theta,t},h]
\,ds}_{R_1t} \\
& \qquad +
\underbrace{\int_0^1
\partial_\eta\partial_\theta
\ell_{\theta_{P_e}+s\Delta_{\theta,t},
      \eta_{P_e}+s\Delta_{\eta,t}}
[h,\Delta_{\eta,t}]
\,ds}_{R_{2t}} .
\end{align*}
Again applying the envelope condition \cref{assumption:envelope},
\[
\left|
\left\{
\partial_\theta\ell_{\hat\theta_t,\hat\eta_t}
-
\partial_\theta\ell_{\theta_{P_e},\eta_{P_e}}
\right\}[h]
\right|
\lesssim
C(Z)
\left(
\|\hat\theta_t-\theta_{P_e}\|_\Theta
+
\|\hat\eta_t-\eta_{P_e}\|_\Upsilon
\right)
\|h\|_\Theta.
\]

Taking \(h=\hat\alpha_t-\alpha_{P_e}\) yields
\[
\begin{aligned}
\E_{P_e}[R_{1t}^2]
&\lesssim
\|\hat\alpha_t-\alpha_{P_e}\|_\Theta^2
\\
&\quad+
\|\hat\alpha_t-\alpha_{P_e}\|_\Theta^2
\left(
\|\hat\theta_t-\theta_{P_e}\|_\Theta
+
\|\hat\eta_t-\eta_{P_e}\|_\Upsilon
\right)^2.
\end{aligned}
\]

Similarly, taking \(h=\alpha_{P_e}\) gives
\[
\begin{aligned}
\E_{P_e}[R_{2t}^2]
&\lesssim
\|\alpha_{P_e}\|_\Theta^2
\left(
\|\hat\theta_t-\theta_{P_e}\|_\Theta
+
\|\hat\eta_t-\eta_{P_e}\|_\Upsilon
\right)^2.
\end{aligned}
\]

Combining the preceding bounds,
\[
\begin{aligned}
\E_{P_e}
\!\left[
\left(
F_t(\hat\theta_t,\hat\eta_t,\hat\alpha_t)(Z)
-
F_0(Z)
\right)^2
\right]
&\lesssim
\|\hat\theta_t-\theta_{P_e}\|_\Theta^2
+
\|\hat\eta_t-\eta_{P_e}\|_\Upsilon^2
+
\|\hat\alpha_t-\alpha_{P_e}\|_\Theta^2
\\
& \qquad+
\|\hat\alpha_t-\alpha_{P_e}\|_\Theta^2
\left(
\|\hat\theta_t-\theta_{P_e}\|_\Theta
+
\|\hat\eta_t-\eta_{P_e}\|_\Upsilon
\right)^2 .
\end{aligned}
\]

Averaging over \(t\), condition \ref{cond:rates} of \cref{thm:clt} implies
\[
\frac1T\sum_{t=1}^T
\|\hat\theta_t-\theta_{P_e}\|_\Theta^2
\leq
\left(
\frac1T\sum_{t=1}^T
\|\hat\theta_t-\theta_{P_e}\|_\Theta^4
\right)^{1/2}=
o_p(T^{-1/2}),
\]
and similarly for \(\hat\eta_t\) and \(\hat\alpha_t\). Moreover,

\begin{align*}
\frac1T\sum_{t=1}^T
\|\hat\alpha_t-\alpha_{P_e}\|_\Theta^2
\|\hat\theta_t-\theta_{P_e}\|_\Theta^2
&\leq
\left(
\frac1T\sum_{t=1}^T
\|\hat\alpha_t-\alpha_{P_e}\|_\Theta^4
\right)^{1/2}
\left(
\frac1T\sum_{t=1}^T
\|\hat\theta_t-\theta_{P_e}\|_\Theta^4
\right)^{1/2}
\\
&=
o_p(T^{-1}),
\end{align*}
and the same argument yields
$
\frac1T\sum_{t=1}^T
\|\hat\alpha_t-\alpha_{P_e}\|_\Theta^2
\|\hat\eta_t-\eta_{P_e}\|_\Upsilon^2
=
o_p(T^{-1})$.

Therefore,
\[
\frac1T\sum_{t=1}^T
\E_{P_e}
\!\left[
\left(
F_t(\hat\theta_t,\hat\eta_t,\hat\alpha_t)(Z)
-
F_0(Z)
\right)^2
\right]
=
o_p(T^{-1/2}).
\]

\end{proof}

%% file: ref.bib
@book{bickel1993efficient,
  title={Efficient and adaptive estimation for semiparametric models},
  author={Bickel, Peter J and Klaassen, Chris AJ},
  volume={4},
  publisher={Springer},
  year={1993}
}

@article{https://doi.org/10.3982/ECTA18515,
author = {Chernozhukov, Victor and Newey, Whitney K. and Singh, Rahul},
title = {Automatic Debiased Machine Learning of Causal and Structural Effects},
journal = {Econometrica},
volume = {90},
number = {3},
pages = {967-1027},
doi = {https://doi.org/10.3982/ECTA18515},
url = {https://onlinelibrary.wiley.com/doi/abs/10.3982/ECTA18515},
eprint = {https://onlinelibrary.wiley.com/doi/pdf/10.3982/ECTA18515},
year = {2022}
}

@inproceedings{NEURIPS2020_6fd86e0a,
 author = {Zhang, Kelly and Janson, Lucas and Murphy, Susan},
 booktitle = {Advances in Neural Information Processing Systems},
 pages = {9818--9829},
 title = {Inference for Batched Bandits},
 url = {https://proceedings.neurips.cc/paper_files/paper/2020/file/6fd86e0ad726b778e37cf270fa0247d7-Paper.pdf},
 volume = {33},
 year = {2020}
}

@article{
2019hadad,
author = {Vitor Hadad  and David A. Hirshberg  and Ruohan Zhan  and Stefan Wager  and Susan Athey },
title = {Confidence intervals for policy evaluation in adaptive experiments},
journal = {Proceedings of the National Academy of Sciences},
volume = {118},
number = {15},
pages = {e2014602118},
year = {2021},
doi = {10.1073/pnas.2014602118},
URL = {https://www.pnas.org/doi/abs/10.1073/pnas.2014602118},
eprint = {https://www.pnas.org/doi/pdf/10.1073/pnas.2014602118}}

@article{10.1093/biomet/asaa054,
    author = {Rotnitzky, A and Smucler, E and Robins, J M},
    title = {Characterization of parameters with a mixed bias property},
    journal = {Biometrika},
    volume = {108},
    number = {1},
    pages = {231-238},
    year = {2021},
    month = {03},
    issn = {0006-3444},
    doi = {10.1093/biomet/asaa054},
    url = {https://doi.org/10.1093/biomet/asaa054},
    eprint = {https://academic.oup.com/biomet/article-pdf/108/1/231/36441108/asaa054.pdf},
}

@article{10.1214/15-AOS1384,
author = {Alexander R. Luedtke and Mark J. van der Laan},
title = {{Statistical inference for the mean outcome under a possibly non-unique optimal treatment strategy}},
volume = {44},
journal = {The Annals of Statistics},
number = {2},
publisher = {Institute of Mathematical Statistics},
pages = {713 -- 742},
year = {2016},
doi = {10.1214/15-AOS1384},
URL = {https://doi.org/10.1214/15-AOS1384}
}

@article{dvoretzky1972,
      title={Asymptotic normality for sums of dependent random variables}, 
      author={Aryeh Dvoretzky},
      year={1972},
      journal={Berkeley Symp. on Math. Statist. and Prob.},
      primaryClass={cs.LG}
}

@article{zhang2023statistical,
      title={Statistical Inference After Adaptive Sampling for Longitudinal Data}, 
      author={Kelly W. Zhang and Lucas Janson and Susan A. Murphy},
      year={2023},
      journal={arXiv:2202.07098},
      primaryClass={cs.LG}
}

@inproceedings{cook2023semiparametric,
 author = {Cook, Thomas and Mishler, Alan and Ramdas, Aaditya},
 booktitle = {Proceedings of the Third Conference on Causal Learning and Reasoning},
 pages = {1033--1064},
 series = {Proceedings of Machine Learning Research},
 title = {Semiparametric Efficient Inference in Adaptive Experiments},
 url = {https://proceedings.mlr.press/v236/cook24a.html},
 volume = {236},
 year = {2024}
}

@inproceedings{NEURIPS2023_a399456a,
 author = {Ying, Mufang and Khamaru, Koulik and Zhang, Cun-Hui},
 booktitle = {Advances in Neural Information Processing Systems},
 pages = {52051--52072},
 title = {Adaptive Linear Estimating Equations},
 url = {https://proceedings.neurips.cc/paper_files/paper/2023/file/a399456a191ca36c7c78dff367887f0a-Paper-Conference.pdf},
 volume = {36},
 year = {2023}
}

@article{Hardy, title={Inequalities}, volume={37},author = {G. Hardy and J.E. Littlewood and G. Pólya},  DOI={10.1017/S0025557200027455}, number={321}, journal={The Mathematical Gazette}, author={E. C. T.}, year={1953}, pages={236–236}}

@book{van2000asymptotic,
author={Aad van der Vaart},
  title={Asymptotic statistics},
  volume={3},
  year={2000},
  publisher={Cambridge university press}
}

@book{hall2014martingale,
  title={Martingale limit theory and its application},
  author={Hall, Peter and Heyde, Christopher C},
  year={1980},
  publisher={Academic press}
}

@article{afsar2022reinforcement,
  title={Reinforcement learning based recommender systems: A survey},
  author={Afsar, M Mehdi and Crump, Trafford and Far, Behrouz},
  journal={ACM Computing Surveys},
  volume={55},
  number={7},
  pages={1--38},
  year={2022},
  publisher={ACM New York, NY}
}

@article{info:doi/10.2196/18477,
author="Liu, Siqi
and See, Kay Choong
and Ngiam, Kee Yuan
and Celi, Leo Anthony
and Sun, Xingzhi
and Feng, Mengling",
title="Reinforcement Learning for Clinical Decision Support in Critical Care: Comprehensive Review",
journal="J Med Internet Res",
year="2020",
month="Jul",
day="20",
volume="22",
number="7",
pages="e18477",
issn="1438-8871",
doi="10.2196/18477",
url="https://www.jmir.org/2020/7/e18477",
url="https://doi.org/10.2196/18477",
url="http://www.ncbi.nlm.nih.gov/pubmed/32706670"
}

@article{https://doi.org/10.1002/sim.3720,
author = {Zhao, Yufan and Kosorok, Michael R. and Zeng, Donglin},
title = {Reinforcement learning design for cancer clinical trials},
journal = {Statistics in Medicine},
volume = {28},
number = {26},
pages = {3294-3315},
doi = {https://doi.org/10.1002/sim.3720},
url = {https://onlinelibrary.wiley.com/doi/abs/10.1002/sim.3720},
eprint = {https://onlinelibrary.wiley.com/doi/pdf/10.1002/sim.3720},
year = {2009}
}

@article{info:doi/10.2196/jmir.7994,
author="Yom-Tov, Elad
and Feraru, Guy
and Kozdoba, Mark
and Mannor, Shie
and Tennenholtz, Moshe
and Hochberg, Irit",
title="Encouraging Physical Activity in Patients With Diabetes: Intervention Using a Reinforcement Learning System",
journal="J Med Internet Res",
year="2017",
month="Oct",
day="10",
volume="19",
number="10",
pages="e338",
issn="1438-8871",
doi="10.2196/jmir.7994",
url="http://www.jmir.org/2017/10/e338/",
url="https://doi.org/10.2196/jmir.7994",
url="http://www.ncbi.nlm.nih.gov/pubmed/29017988"
}

@article{https://doi.org/10.3982/ECTA17527,
author = {Kasy, Maximilian and Sautmann, Anja},
title = {Adaptive Treatment Assignment in Experiments for Policy Choice},
journal = {Econometrica},
volume = {89},
number = {1},
pages = {113-132},
doi = {https://doi.org/10.3982/ECTA17527},
url = {https://onlinelibrary.wiley.com/doi/abs/10.3982/ECTA17527},
eprint = {https://onlinelibrary.wiley.com/doi/pdf/10.3982/ECTA17527},
year = {2021}
}

@article{10.1093/jeea/jvad067,
    author = {Caria, A Stefano and Gordon, Grant and Kasy, Maximilian and Quinn, Simon and Shami, Soha Osman and Teytelboym, Alexander},
    title = {An Adaptive Targeted Field Experiment: Job Search Assistance for Refugees in Jordan},
    journal = {Journal of the European Economic Association},
    volume = {22},
    number = {2},
    pages = {781-836},
    year = {2024},
    month = {04},
    issn = {1542-4766},
    doi = {10.1093/jeea/jvad067},
    url = {https://doi.org/10.1093/jeea/jvad067},
    eprint = {https://academic.oup.com/jeea/article-pdf/22/2/781/57146869/jvad067.pdf},
}

@article{10.1111/ectj.12097,
    author = {Chernozhukov, Victor and Chetverikov, Denis and Demirer, Mert and Duflo, Esther and Hansen, Christian and Newey, Whitney and Robins, James},
    title = {Double/debiased machine learning for treatment and structural parameters},
    journal = {The Econometrics Journal},
    volume = {21},
    number = {1},
    pages = {C1-C68},
    year = {2018},
    month = {01},
    issn = {1368-4221},
    doi = {10.1111/ectj.12097},
    url = {https://doi.org/10.1111/ectj.12097},
    eprint = {https://academic.oup.com/ectj/article-pdf/21/1/C1/27684918/ectj00c1.pdf},
}

@article{3f41bc42-f438-35b7-8504-3723ce0fea5f,
 ISSN = {00029939, 10886826},
 URL = {http://www.jstor.org/stable/2034876},
 author = {Patrick Billingsley},
 journal = {Proceedings of the American Mathematical Society},
 number = {5},
 pages = {788--792},
 publisher = {American Mathematical Society},
 title = {The Lindeberg-Lévy Theorem for Martingales},
 urldate = {2026-05-06},
 volume = {12},
 year = {1961}
}

@article{bibault2021postcontext,
author = {Bibaut, Aurelien and Chambaz, Antoine and Dimakopoulou, Maria and Kallus, Nathan and Laan, Mark},
year = {2021},
month = {12},
pages = {28548-28559},
title = {Post-Contextual-Bandit Inference},
volume = {34},
journal = {Advances in Neural Information Processing Systems}
}

@article{kato2020standardized,
    title={Confidence Interval for Off-Policy Evaluation from Dependent Samples via Bandit Algorithm: Approach from Standardized Martingales},
      author={Masahiro Kato},
      year={2020},
        journal = {arXiv:2006.06982},
      archivePrefix={arXiv},
      primaryClass={stat.ML}
}

@inproceedings{NEURIPS2019_65b1e92c,
 author = {Shin, Jaehyeok and Ramdas, Aaditya and Rinaldo, Alessandro},
 booktitle = {Advances in Neural Information Processing Systems},
 title = {Are sample means in multi-armed bandits positively or negatively biased?},
 year = {2019}
}

@InProceedings{kato2021efficient,
  title = 	 {Active Adaptive Experimental Design for Treatment Effect Estimation with Covariate Choice},
  author =       {Kato, Masahiro and Oga, Akihiro and Komatsubara, Wataru and Inokuchi, Ryo},
  booktitle = 	 {Proceedings of the 41st International Conference on Machine Learning},
  pages = 	 {23291--23323},
  year = 	 {2024},
  url = 	 {https://proceedings.mlr.press/v235/kato24a.html}
}

@article{vanderlaan2026automaticdebiasedmachinelearning,
      title={Automatic Debiased Machine Learning for Smooth Functionals of Nonparametric M-Estimands}, 
      author={Lars van der Laan and Aurelien Bibaut and Nathan Kallus and Alex Luedtke},
      year={2026},
      journal={arXiv:2501.11868},
      primaryClass={stat.ME},
      url={https://arXiv.org/abs/2501.11868}, 
}

@article{10.1214/24-AOS2485,
author = {Licong Lin and Koulik Khamaru and Martin J. Wainwright},
title = {{Semiparametric inference based on adaptively collected data}},
volume = {53},
journal = {The Annals of Statistics},
number = {3},
publisher = {Institute of Mathematical Statistics},
pages = {989 -- 1014},
year = {2025},
doi = {10.1214/24-AOS2485},
URL = {https://doi.org/10.1214/24-AOS2485}
}

@article{guo2025statisticalinferencemisspecifiedcontextual,
      title={Statistical Inference for Misspecified Contextual Bandits}, 
      author={Yongyi Guo and Ziping Xu},
      year={2025},
      journal={arXiv:2509.06287},
      archivePrefix={arXiv},
      primaryClass={math.ST},
      url={https://arXiv.org/abs/2509.06287}, 
}

@InProceedings{pmlr-v80-deshpande18a,
  title = 	 {Accurate Inference for Adaptive Linear Models},
  author =       {Deshpande, Yash and Mackey, Lester and Syrgkanis, Vasilis and Taddy, Matt},
  booktitle = 	 {Proceedings of the 35th International Conference on Machine Learning},
  pages = 	 {1194--1203},
  year = 	 {2018},
  volume = 	 {80},
  series = 	 {Proceedings of Machine Learning Research},
  url = 	 {https://proceedings.mlr.press/v80/deshpande18a.html}
}

@inproceedings{
BIC_regret,
title={Fast Best-in-Class Regret for Contextual Bandits},
author={Samuel Girard and Aurelien Bibaut and Jill-J{\^e}nn Vie and Arthur Gretton and Nathan Kallus and Houssam Zenati},
booktitle={Forty-Second Annual Conference on Uncertainty in Artificial Intelligence},
year={2026},
url={https://openreview.net/forum?id=pmPgWIs4qb}
}

@article{leiner2026adaptiveoffpolicyinferencemestimators,
      title={Adaptive Off-Policy Inference for M-Estimators Under Model Misspecification}, 
      author={James Leiner and Robin Dunn and Aaditya Ramdas},
      year={2026},
      journal={arXiv:2509.14218},
      archivePrefix={arXiv},
      primaryClass={stat.ME},
      url={https://arXiv.org/abs/2509.14218}, 
}

@article{1982laiwei, 
 ISSN = {00905364},
 URL = {http://www.jstor.org/stable/2240506},
 author = {Tze Leung Lai and Ching Zong Wei},
 journal = {Annals of Statistics},
 number = {1},
 pages = {154--166},
 publisher = {Institute of Mathematical Statistics},
 title = {Least Squares Estimates in Stochastic Regression Models with Applications to Identification and Control of Dynamic Systems},
 urldate = {2024-03-18},
 volume = {10},
 year = {1982}
}

@inproceedings{zhang2021mestimators,
 author = {Zhang, Kelly and Janson, Lucas and Murphy, Susan},
 booktitle = {Advances in Neural Information Processing Systems},
 pages = {7460--7471},
 title = {Statistical Inference with {M}-Estimators on Adaptively Collected Data},
 url = {https://proceedings.neurips.cc/paper_files/paper/2021/file/3d7d9461075eb7c37fbbfcad1d7042c1-Paper.pdf},
 volume = {34},
 year = {2021}
}

@article{khamaru2021near,
author = {Koulik Khamaru and Yash Deshpande and Tor Lattimore and Lester Mackey and Martin J. Wainwright},
title = {{Near-optimal inference in adaptive linear regression}},
volume = {53},
journal = {The Annals of Statistics},
number = {6},
publisher = {Institute of Mathematical Statistics},
pages = {2329 -- 2355},
year = {2025},
doi = {10.1214/24-AOS2450},
URL = {https://doi.org/10.1214/24-AOS2450}
}
